\documentclass{amsart}

\usepackage{amssymb}
\usepackage{amsmath}
\usepackage{amsthm}
\usepackage{amsfonts}

\usepackage{a4wide}
\usepackage{longtable}
\usepackage{multirow}

\newtheorem{proposition}{Proposition}[section]
\newtheorem{theorem}{Theorem}[section]
\newtheorem{corollary}{Corollary}[section]

\theoremstyle{definition}
\newtheorem{definition}{Definition}[section]

\theoremstyle{remark}
\newtheorem{remark}{Remark}[section]
\newtheorem{example}{Example}[section]

\begin{document}

\title{\(3\)-class field towers with \(2\) or \(3\) stages}

\author{Helga Boyer von Berghof}
\address{Krenngasse 43\\8010 Graz\\Austria}
\email{helgaboyervonberghof@gmail.com}

\author{Daniel C. Mayer}
\address{Naglergasse 53\\8010 Graz\\Austria}
\email{quantum.algebra@icloud.com}
\urladdr{http://www.algebra.at}

\thanks{Research
of the second author
supported by the
Austrian Science Fund (FWF): projects
J0497-PHY, P26008-N25, and by the European Union Research Executive Agency (EUREA): project Horizon Europe 2021--2027.}

\subjclass[2000]{11R37, 11R29, 11R11, 11R16, 11R20; 20D15, 20F14}
\keywords{\(3\)-class field tower, metabelianization, second \(3\)-class group,
unramified cyclic cubic extensions,
principalization of \(3\)-classes, abelian type invariants,
quadratic fields, cubic fields, dihedral fields,
Artin reciprocity law,
group transfer kernels, abelian quotient invariants of first and second order,
relation rank, Shafarevich theorem,
balanced presentation, inversion automorphism, Schur \(\sigma\)-groups,
covers, descendant trees}

\date{Thursday, 30 April 2026}


\begin{abstract}
For quadratic fields \(k=\mathbb{Q}(\sqrt{d})\)
with discriminant \(d\),
\(3\)-class group \(\mathrm{Cl}_3(k)\simeq (\mathbb{Z}/3\mathbb{Z})^2\),
and four \textit{simple} \(3\)-principalization types
\(\varkappa(k)\in\lbrace (1122),(3122),(1231),(2231)\rbrace\),
we establish necessary and sufficient conditions
for the Galois group
\(S=\mathrm{Gal}(\mathrm{F}_3^\infty(k)/k)\)
of the unramified Hilbert \(3\)-class field tower of \(k\)
to coincide with the Galois group
\(M=\mathrm{Gal}(\mathrm{F}_3^2(k)/k)\)
of the maximal metabelian unramified \(3\)-extension of \(k\).
In the case of non-coincidence,
we study the path between \(M\) and \(S\)
in the descendant tree of the
elementary bicyclic \(3\)-group \((\mathbb{Z}/3\mathbb{Z})^2\).
For two \textit{complex} \(3\)-principalization types
\(\varkappa(k)\in\lbrace (2122),(4231)\rbrace\),
we show that infinitely many non-metabelian
possible Galois groups
\(S=\mathrm{Gal}(\mathrm{F}_3^\infty(k)/k)\)
with presumably unbounded derived length \(\mathrm{dl}(S)\)
share a common metabelianization \(M=S/S^{\prime\prime}\),
whence only partial criteria can be stated.
Minimal discriminants \(d>0\)
with assigned simple \(3\)-principalization type \(\varkappa(k)\)
and fixed length \(\ell_3(k)\in\lbrace 2,3\rbrace\)
of the \(3\)-class field tower
are determined experimentally
for nilpotency class \(\mathrm{cl}(M)\in\lbrace 5,7,9,11\rbrace\)
under assumption of the generalized Riemann hypothesis.
\end{abstract}

\maketitle


\section{Our intention}
\label{s:Intention}

\noindent
We use \textit{logarithmic abelian type invariants of second order}
to state necessary and sufficient criteria
for finite towers of successive maximal unramified abelian 3-extensions,
so-called Hilbert \(3\)-class field towers,
over quadratic number fields
to possess a length of precisely two respectively three stages.
The underlying quadratic fields
have an elementary bicyclic \(3\)-class group
and one of four well-known \textit{simple} \(3\)-principalization types
in their four unramified cyclic cubic extensions.
In two main theorems, we state the outstanding new criteria,
and we identify the graph theoretic position of crucial Galois groups. 
By means of challenging computations, our theory is underpinned by
\textit{numerical prototypes} of the four simple \(3\)-principalization types and both tower lengths.
For two \textit{complex} \(3\)-principalization types,
we can only state theorems with partial conditions.


\section{Introduction}
\label{s:Intro}

\noindent
Let \(k=\mathbb{Q}(\sqrt{d})\) be a quadratic number field
with fundamental discriminant \(d\),
elementary bicyclic \(3\)-class group \(\mathrm{Cl}_3(k)\simeq (\mathbb{Z}/3\mathbb{Z})^2\),
and one of six \(3\)-principalization types
\cite{Ma1991,Ma2012b}, \cite[p. 91]{Ma2018},
also known as
capitulation types or \textit{transfer kernel types} (TKT),

\begin{equation}
\label{eqn:SimpleTKTComplexTKT}
\begin{aligned}
& \text{E.6: }\varkappa(k)\sim(1122), \qquad 
& \text{G.16: }\varkappa(k)\sim(4231), \\
& \text{E.8: }\varkappa(k)\sim(1231), \qquad 
& \text{H.4: }\varkappa(k)\sim(2122), \\
& \text{E.9: }\varkappa(k)\sim(2231), \qquad 
& \text{two \textbf{complex} types.} \\
& \text{E.14: }\varkappa(k)\sim(3122), \\
& \text{four \textbf{simple} types,}
\end{aligned}
\end{equation}

\noindent
For the simple types,
our goal is a necessary and sufficient criterion for the coincidence
of the \(\sigma\)-automorphism group
\(S=\mathrm{Gal}(\mathrm{F}_3^\infty(k)/k)\)
of the maximal unramified pro-\(3\)-extension of \(k\)
and the \(\sigma\)-Galois group
\(M=\mathrm{Gal}(\mathrm{F}_3^2(k)/k)\)
of the maximal metabelian unramified \(3\)-extension of \(k\),
independently of the so-called \textit{state},
which is given in terms of the logarithmic \textit{abelian type invariants} (ATI)
\(\alpha(k)=(\alpha_i)_{i=1}^4\) of the \(3\)-class groups \(\alpha_i:=\mathrm{ATI}(\mathrm{Cl}_3(E_i))\)
of the four unramified cyclic cubic extensions \(E_i/k\), \(1\le i\le 4\).
When \(S=M\), then the \(3\)-class field tower of \(k\)
has \textit{exactly two stages}, \(\ell_3(k)=2\).
In the case of non-coincidence, \(S\ne M\),
we have \textit{precisely three stages}, \(\ell_3(k)=3\), for simple types,
but we have \textit{three or more stages}, \(\ell_3(k)\ge 3\), for complex types.
Then we determine the path between \(M\) and \(S\)
in the descendant tree of
either the root \(Q=\mathrm{SmallGroup}(729,49)\), briefly \(\langle 729,49\rangle\), for the types E.6, E.14 and H.4,
or the root \(U=\mathrm{SmallGroup}(729,54)\), briefly \(\langle 729,54\rangle\), for the types E.8, E.9 and G.16
\cite{BEO2005}
(the designations \textit{non-CF group} \(Q\) and \(U\) are due to Ascione et al.
\cite[Tbl. 1, p. 265, Tbl. 2, p. 266, Tbl. 3, p. 268]{AHL1977}).
We briefly speak about the \textit{Q-tree} \(\mathcal{T}(Q)\)
and the \textit{U-tree} \(\mathcal{T}(U)\).

The present article analyzes Hilbert \(3\)-class field towers
with a length \(\ell_3(k)\in\lbrace 2,3\rbrace\) for simple types,
presumably unbounded length \(\ell_3(k)\ge 2\) for complex types, and
a \textit{periodic} group \(M\) of fixed coclass \(\mathrm{cc}(M)=2\),
called \textit{second maximal class} in
\cite{AHL1977}, and
\textbf{odd} nilpotency class \(\mathrm{cl}(M)\ge 5\) for simple types,
\textbf{even} nilpotency class \(\mathrm{cl}(M)\ge 6\) for complex types.
The metabelianization \(M=S/S^{\prime\prime}\) of \(S\) is located
on one of two coclass trees with roots
\(\langle 243,6\rangle\), the parent of \(\langle 729,49\rangle\), respectively
\(\langle 243,8\rangle\), the parent of \(\langle 729,54\rangle\),
in the form of vertices with depth
\(\mathrm{dp}(M)=1\) for simple types, and
\(\mathrm{dp}(M)=2\) for complex types.
The \(n\)-th \textit{state} of \(M\) is characterized by the ATI

\begin{equation}
\label{eqn:States}
\alpha(k)=
\begin{cases}
\lbrack(n+3,n+2),1^3,(21)^2\rbrack
\text{ for the types E.6, E.14 and H.4, and} \\
\lbrack(n+3,n+2),(21)^3\rbrack
\text{ for the types E.8, E.9 and G.16},
\end{cases}
\end{equation}

\noindent
with \textbf{hetero}cyclic polarization \((n+3,n+2)\),
or also by
\textbf{odd} class \(\mathrm{cl}(M)=5+2n\) for simple types
and \textbf{even} class \(\mathrm{cl}(M)=6+2n\) for complex types,
for each integer
\(n\ge 0\), called the \textit{state parameter}.
In the case of non-coincidence, \(S\ne M\),
we emphasize the remarkable fact that
for each of these two descendant trees,
there exists a non-metabelian \textit{infinite path}
with strictly alternating step sizes \(s=2\) and \(s=1\)
and vertices of the \textit{skeleton} type
\cite[p. 91]{Ma2018}
of the mainlines of the coclass trees, that is,
c.18: \(\varkappa(k)\sim(0122)\), respectively
c.21: \(\varkappa(k)\sim(0231)\),
\cite{Ma2015c},
beginning at the bifurcation \(Q=\langle 729,49\rangle\), respectively \(U=\langle 729,54\rangle\),
from which the Schur \(\sigma\)-groups \(S\)
\cite{Ag1998}
of the \textit{ground} state \(n=0\) and of all \textit{excited} states \(n\ge 1\)
branch off as terminal leaves with step size \(s=2\)
and depth
\(\mathrm{dp}(S)=1\) for simple types and odd
\(\mathrm{dp}(S)\ge 3\) for complex types.

We begin with arithmetic foundations in \S\S\
\ref{s:Notation},
\ref{s:Artin}
and group theoretic techniques in \S\
\ref{s:GroupTheory}.
The main results and proofs are contained in \S\
\ref{s:Length}
and \S\
\ref{s:RecallImaginary},
ostensively visualized by tree diagrams in \S\
\ref{s:Diagrams},
and underpinned by computer experiments in \S\
\ref{s:Computations}
and prototypes in \S\
\ref{s:ProtoTypes}.


\section{Terminology and notation}
\label{s:Notation}

\noindent
Let \(K/\mathbb{Q}\) be an algebraic number field.
Given a prime number \(p\in\mathbb{P}\),
we denote the Sylow \(p\)-subgroup \(\mathrm{Syl}_p\mathrm{Cl}(K)\)
of the ideal class group \(\mathrm{Cl}(K)\) of \(K\) by
\(\mathrm{Cl}_p(K)\),
and we call it the \textit{\(p\)-class group} of \(K\).

\begin{definition}
\label{dfn:Hilbert}
The maximal unramified \(p\)-extension \(E\) of \(K\)
with abelian Galois group \(\mathrm{Gal}(E/K)\)
is called the \textit{Hilbert \(p\)-class field} of \(K\),
and is denoted by \(\mathrm{F}_p^1(K)\).
\end{definition}

\noindent
The concept \textit{\(p\)-class field} was coined by Hilbert in his \lq\lq Zahlbericht\rq\rq\
\cite[Satz 94, p. 279]{Hi1897},
since Hilbert conjectured an isomorphism
of the automorphism group \(\mathrm{Gal}(\mathrm{F}_p^1(K)/K)\)
to the \(p\)-class group \(\mathrm{Cl}_p(K)\) of \(K\).
His hypothesis turned out to be a special case of the Artin reciprocity law
\cite[Allgemeines Reziprozit\"atsgesetz, p. 361]{Ar1927}.

The construction of Hilbert \(p\)-class fields may be iterated and
leads to the maximal unramified pro-\(p\)-extension \(T\) of \(K\)
with a potentially infinite topological pro-\(p\) Galois group \(\mathrm{Gal}(T/K)\),
endowed with the Krull topology.


\begin{definition}
\label{dfn:Tower}
For any non-negative integer \(q\in\mathbb{N}_0\),
the \textit{\(q\)-th \(p\)-class field} of \(K\)
is defined recursively by
\(\mathrm{F}_p^0(K):=K\) for \(q=0\), and
\(\mathrm{F}_p^q(K):=\mathrm{F}_p^1(\mathrm{F}_p^{q-1}(K))\) for \(q\ge 1\).

The ascending tower
\begin{equation}
\label{eqn:Tower}
K=\mathrm{F}_p^0(K)\le\mathrm{F}_p^1(K)\le\mathrm{F}_p^2(K)\le\ldots\le\mathrm{F}_p^q(K)\le\ldots\le\mathrm{F}_p^\infty(K)
\end{equation}
of successive Hilbert \(p\)-class fields
is called the \textit{(unramified) Hilbert \(p\)-class field tower} of \(K\),
and its union is denoted by \(\mathrm{F}_p^\infty(K):=\bigcup_{q=0}^\infty\,\mathrm{F}_p^n(K)\).

If the tower becomes stationary
with \(\mathrm{F}_p^q(K)=\mathrm{F}_p^{q+1}(K)\) for some \(q\in\mathbb{N}_0\),
then the non-negative integer \(\ell_p(K):=\min\lbrace q\in\mathbb{N}_0\mid \mathrm{F}_p^q(K)=\mathrm{F}_p^{q+1}(K)\rbrace\in\mathbb{N}_0\)
is called the \textit{length} of the \(p\)-class field tower of \(K\),
otherwise the length \(\ell_p(K):=\infty\) is unbounded.
\end{definition}


\noindent
Generally, if \(\ell_p(K)\ge q\),
then the Galois group \(G=\mathrm{Gal}(\mathrm{F}_p^q(K)/K)\)
of the \(q\)-th \(p\)-class field of \(K\)
has soluble length (or derived length) \(\mathrm{sl}(G)=q\) (or \(\mathrm{dl}(G)=q\)).
In particular:

\begin{definition}
\label{dfn:Metabelian}
For \(\ell_p(K)\ge 2\),
the \textit{second Hilbert \(p\)-class field} \(\mathrm{F}_p^2(K)\)
is the maximal unramified \(p\)-extension of \(K\)
with \textit{metabelian} Galois group \(M:=\mathrm{Gal}(\mathrm{F}_p^2(K)/K)\),
since a group \(M\) with soluble length \(\mathrm{sl}(M)=2\)
is dubbed \textit{metabelian} because its commutator subgroup \(M^\prime\) is abelian.
\(M\) is called the \textit{second \(p\)-class group} of \(K\).

As opposed,
we call the topological pro-\(p\) Galois group
\(S:=\mathrm{Gal}(\mathrm{F}_p^\infty(K)/K)\)
of the maximal unramified pro-\(p\)-extension \(\mathrm{F}_p^\infty(K)\) of \(K\)
the \textit{(full \(p\)-class field) tower group} of \(K\).
The group \(M=S/S^{\prime\prime}\) is the metabelianization,
i.e., the second derived quotient, of the full tower group \(S\).
\end{definition}

\noindent
In the present article,
our focus will be on criteria for the distinction between
metabelian two-stage towers with \(\mathrm{F}_p^2(K)=\mathrm{F}_p^\infty(K)\),
\(M=S\), and \(\ell_p(K)=2\),
and on the other hand non-metabelian towers with length \(\ell_p(K)\ge 3\),
in the situation with the particular prime \(p=3\).


\section{Artin pattern and invariants of second order}
\label{s:Artin}

\noindent
In the Introduction \S\
\ref{s:Intro},
we used \textit{transfer kernel types} (TKT) \(\varkappa(k)\) and
logarithmic \textit{abelian type invariants} (ATI) \(\alpha(k)\)
of quadratic fields \(k=\mathbb{Q}(\sqrt{d})\)
to describe the goals of this article.
Now we give precise definitions of these concepts
for a quadratic number field \(k\)
with fundamental discriminant \(d(k)=d\) and
elementary bicyclic \(3\)-class group
\(\mathrm{Cl}_3(k)\simeq (\mathbb{Z}/3\mathbb{Z})^2\).
Such a field has four unramified cyclic cubic extensions
\(E_i/k\), \(1\le i\le 4\).
A natural order of these four extensions \(E_i\),
which are dihedral of degree six over \(\mathbb{Q}\)
and share a common discriminant \(d(E_i)=d^3\),
is given by increasing regulators \(R_1<\ldots<R_4\) of the
non-Galois absolutely cubic subfields \(L_i<E_i\).

\begin{definition}
\label{dfn:TKT}
Let \(T_i:\,\mathrm{Cl}_3(k)\to\mathrm{Cl}_3(E_i)\)
be the \textit{transfer} of \(3\)-classes from \(k\) to \(E_i\),
which is also called the class extension homomorphism
\(\mathfrak{a}\mathcal{P}_k\mapsto(\mathfrak{a}\mathcal{O}_{E_i})\mathcal{P}_{E_i}\).
According to Theorem 94 by Hilbert
\cite[p. 279]{Hi1897},
\(T_i\) is not injective.
Therefore, let \(H_j\) be the cyclic subgroup of order \(3\) of
\(\mathrm{Cl}_3(k)\simeq (\mathbb{Z}/3\mathbb{Z})^2\)
which corresponds to \(E_j\)
by Artin's reciprocity law
\cite{Ar1927}, 
for each \(1\le j\le 4\),
that is, \(H_j=N_{E_j/k}\mathrm{Cl}_3(E_j)\) are the norm class groups.
Then the \textit{transfer kernel type} (TKT) of \(k\),
\(\varkappa(k)=(\varkappa_i)_{i=1}^4\), (see
\cite{BuMa2015,Ma2012b}
for a permutation-invariant version as \(S_4\)-orbit)
is defined by

\begin{equation}
\label{eqn:TKT}
\varkappa_i:=
\begin{cases}
0 & \text{ if }\ker(T_i)=\mathrm{Cl}_3(k), \\
j & \text{ if }\ker(T_i)=H_j.
\end{cases}
\end{equation}
\end{definition}


\begin{definition}
\label{dfn:ATI}
The \textit{abelian type invariants} (ATI) of \(k\),
\(\alpha(k)=(\alpha_i)_{i=1}^4\),
consist of the logarithmic abelian type invariants
\(\alpha_i:=\mathrm{ATI}(\mathrm{Cl}_3(E_i))\)
of the \(3\)-class groups 
of the four unramified cyclic cubic extensions \(E_i/k\), \(1\le i\le 4\).
(In
\cite{Ma2014},
the ATI were called the \textit{transfer target type} (TTT), \(\tau(k)\).)
The \textit{Artin pattern} of \(k\) is the pair
\(\mathrm{AP}(k):=(\varkappa(k),\alpha(k))\).
\end{definition}

\noindent
The logarithmic notation is very compact, for instance,
\(21\hat{=}(9,3)\hat{=}(\mathbb{Z}/9\mathbb{Z})\times(\mathbb{Z}/3\mathbb{Z})\),
\(1^3\hat{=}(3,3,3)\hat{=}\) \((\mathbb{Z}/3\mathbb{Z})^3\), and
\((n+3,n+2)\hat{=}(3^{n+3},3^{n+2})\hat{=}(\mathbb{Z}/3^{n+3}\mathbb{Z})\times(\mathbb{Z}/3^{n+2}\mathbb{Z})\),
for any state parameter \(n\ge 0\).


\begin{definition}
\label{dfn:ATI2}
The \textit{abelian type invariants of second order} (ATI2) of \(k\),
\(\alpha^{(2)}(k)=(\alpha(E_i))_{i=1}^4\),
form a quartet where each component consists of
the logarithmic abelian type invariants (ATI)

\begin{equation}
\label{eqn:ATI2}
\alpha(E_i):=\lbrack\quad\mathrm{ATI}(\mathrm{Cl}_3(E_i));\quad\mathrm{ATI}(\mathrm{Cl}_3(E_{i,1})),\ldots,\mathrm{ATI}(\mathrm{Cl}_3(E_{i,n_i}))\quad\rbrack
\end{equation}

\noindent
of the \(3\)-class group
of the unramified cyclic cubic extension \(E_i/k\),
and of the \(3\)-class groups
of the unramified cyclic cubic extensions \(E_{i,j}/E_i\), \(1\le j\le n_i\),
where \(n_i=4\) if \(\mathrm{Cl}_3(E_i)\) is bicyclic,
and \(n_i=13\) if \(\mathrm{Cl}_3(E_i)\) is (elementary) tricyclic.
Note that \(\lbrack E_{i,j}:k\rbrack=9\) and \(\lbrack E_{i,j}:\mathbb{Q}\rbrack=18\).
\end{definition}


\section{Decisions by means of group theoretic constraints}
\label{s:GroupTheory}

\subsection{Galois action}
\label{ss:Sigma}

\noindent
The decision, whether a finite metabelian \(p\)-group \(M\)
is admissible as the second \(p\)-class group \(M=\mathrm{Gal}(\mathrm{F}_p^2(K)/K)\)
of a \textit{quadratic} number field \(K=\mathbb{Q}(\sqrt{d})\), for some \textit{odd} prime \(p\), or not,
can be made with the aid of group theoretic requirements:
it is necessary that there exists an involutory automorphism \(\sigma\in\mathrm{Aut}(M)\)
of order \(\mathrm{ord}(\sigma)=2\) which acts as the inversion \(x\mapsto x^{-1}\)
on the first and second cohomology group \(H^i(M,\mathbb{F}_p)\), for \(i\in\lbrace 1,2\rbrace\).
The same conditions are required for any pro-\(p\)-group \(S\)
to be admissible as the full \(p\)-class field tower group \(S=\mathrm{Gal}(\mathrm{F}_p^\infty(K)/K)\)
\cite[Lem. (4.1) and (4.2), p. 217]{Sf1986}.

\begin{definition}
\label{dfn:Sigma}
A pro-\(p\)-group with this property is called \textit{\(\sigma\)-group}
or \textit{group with \(\sigma\)-automophism}.
\end{definition}


\subsection{Relation rank}
\label{ss:Relators}

\noindent
Let \(p\) be a prime number
and let \(G\) be a pro-\(p\)-group.
The dimension of cohomology groups of \(G\)
acting on the finite field \(\mathbb{F}_p\) as a \(G\)-module
is crucial for applications in class field theory.

\begin{definition}
\label{dfn:Relators}
The \textit{generator rank} of \(G\) is the dimension
\(d_1(G)=\dim_{\mathbb{F}_p}(H^1(G,\mathbb{F}_p))\), and
the \textit{relation rank} of \(G\) is the dimension
\(d_2(G)=\dim_{\mathbb{F}_p}(H^2(G,\mathbb{F}_p))\).
\end{definition}


\noindent
In 1964, Shafarevich
\cite{Sh1964}
determined bounds for the relation rank
of the automorphism group \(G=\mathrm{Gal}(\mathrm{F}_{p,\mathcal{S}}^\infty(K)/K)\)
of the maximal pro-\(p\)-extension \(\mathrm{F}_{p,\mathcal{S}}^\infty(K)\) of an algebraic number field \(K\)
which is unramified outside of an assigned finite set \(\mathcal{S}\) of (real archimedean or non-archimedean) places.
In the special case of the maximal \textit{(everywhere) unramified} pro-\(p\)-extension \(\mathrm{F}_{p}^\infty(K)\) with \(\mathcal{S}=\emptyset\),
the \textbf{Shafarevich theorem} states the following sharp estimate.

\begin{theorem}
\label{thm:Unramified}
If the signature of \(K\) is \((r_1,r_2)\) (and thus \(r_1+2r_2=\lbrack K:\mathbb{Q}\rbrack\)),
the torsionfree Dirichlet unit rank of \(K\) is \(r=r_1+r_2-1\),
and \(\theta\in\lbrace 0,1\rbrace\) is an indicator for
the existence of a primitive \(p\)-th root of unity in \(K\), then
the Shafarevich bounds for the relation rank of the \(p\)-class field tower group
\(S=\mathrm{Gal}(\mathrm{F}_p^\infty(K)/K)\) are given by

\begin{equation}
\label{eqn:Unramified}
d_1(S)\le d_2(S)\le d_1(S)+r+\theta.
\end{equation}
\end{theorem}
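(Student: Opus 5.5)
The lower bound is pure group theory; the upper bound is the arithmetic content. For any finitely generated pro-\(p\)-group \(G\), a minimal presentation \(1\to R\to F\to G\to 1\) with \(F\) free pro-\(p\) on \(d_1(G)\) generators gives the five-term exact sequence in cohomology with coefficients \(\mathbb{F}_p\):
\[
0\to H^1(G)\to H^1(F)\to H^1(R)^F\to H^2(G)\to H^2(F)=0 .
\]
Minimality means the inflation \(H^1(G)\to H^1(F)\) is an isomorphism. Hence \(H^2(G)\simeq H^1(R)^F=\mathrm{Hom}_F(R/R^p[R,F],\mathbb{F}_p)\), so \(d_2(G)\) is the minimal number of relators.

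For the lower bound \(d_1(S)\le d_2(S)\), we need a finite-quotient statement, since \(S=\mathrm{Gal}(\mathrm{F}_p^\infty(K)/K)\) is a priori only pro-finite. If \(d_2(S)\) were infinite there is nothing to prove. Otherwise, the key arithmetic input is that every open subgroup \(U\le S\) has finite abelianization: \(U^{ab}\simeq\mathrm{Cl}_p(L)\) for the corresponding field \(L\) by Artin reciprocity. So \(S\) is a pro-\(p\) group all of whose open subgroups have finite abelianization. Suppose \(d_2(S)<d_1(S)\). Then a standard Golod--Shafarevich type rank argument applies. One can pass to the maximal abelian quotient of a relator-deficient presentation: with \(r<d\) relators, \(F^{ab}\simeq\mathbb{Z}_p^d\) modulo \(r\) elements, so \(S^{ab}\) contains \(\mathbb{Z}_p^{d-r}\) and is infinite. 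This contradicts the finiteness of \(\mathrm{Cl}_p(K)\). It yields \(d_1\le d_2\).

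The upper bound is the main obstacle and needs Galois cohomology. The plan is to identify \(H^2(S,\mathbb{F}_p)\) arithmetically. Let \(\Omega\) be the maximal unramified pro-\(p\) extension, with unit group \(E_\Omega\) and ideal group. I would use the exact sequence relating \(H^2(S,\mathbb{F}_p)\) to the dual of a Kummer-type group:
\[
V_K=\{x\in K^\times : (x)=\mathfrak{a}^p\}/K^{\times p}.
\]
Concretely, I would follow Shafarevich/Koch. The steps are:
\begin{enumerate}
\item Show that \(H^2(S,\mathbb{F}_p)\) injects into a group dual to \(V_K\) (the Brauer/class-formation argument). This uses that \(\mathrm{Cl}(\Omega)_p\) is trivial and the ideal class formation of the tower, via Tate cohomology of the idèle class group restricted to unramified data.
\item Bound \(\dim V_K\) from the exact sequence
\[
0\to \mathcal{O}_K^\times/\mathcal{O}_K^{\times p}\to V_K\to \mathrm{Cl}_K[p]\to 0 .
\]
This gives \(\dim V_K=r+\theta+d_1(S)\), because by Dirichlet \(\mathcal{O}_K^\times\simeq\mu_K\times\mathbb{Z}^r\) contributes \(r+\theta\), and \(\dim\mathrm{Cl}_K[p]=\dim\mathrm{Cl}_K/p=d_1(S)\).
\end{enumerate}

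The hard part is step 1. I expect it to require the Poitou--Tate/class-field-theoretic computation that local unramified conditions impose no further relations. I would carry it out via Koch's treatment of \(H^2\) for restricted ramification, specialised to \(\mathcal{S}=\emptyset\). In that setting the local terms vanish, leaving exactly the Kummer group \(V_K\), which gives \(d_2(S)\le d_1(S)+r+\theta\).
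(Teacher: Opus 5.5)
Your proposal is correct but organized differently from the paper. The paper's proof is a pure citation: Shafarevich's general relation-rank bound for restricted ramification (formula \((18^\prime)\) of \cite{Sh1964}, with the misprint corrected in \cite[Thm.~5.1]{Ma2015c}), specialized to \(\mathcal{S}=\emptyset\). You instead treat the two inequalities separately.

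The lower bound you get from Artin reciprocity and group theory alone: fewer relators than generators would give \(S^{\mathrm{ab}}\simeq\mathrm{Cl}_p(K)\) positive \(\mathbb{Z}_p\)-rank. This is a plain abelianization count, so calling it a ``Golod--Shafarevich type'' argument is a misnomer.

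For the upper bound you isolate the one genuinely arithmetic input, the injection \(H^2(S,\mathbb{F}_p)\hookrightarrow V_K^{\vee}\). You then compute \(\dim V_K=r+\theta+d_1(S)\) from the unit/class-group exact sequence. This shows exactly where \(r\) and \(\theta\) enter, and it confirms the constant \(r+\theta\) independently of the misprinted formula the paper has to correct.

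The cost is that the injection is still quoted (Koch, or \(h^2(G_\emptyset(p))\le\dim B_\emptyset\) in Neukirch--Schmidt--Wingberg). So at the decisive step you are no more self-contained than the paper, whose citation in exchange covers every ramification set at once.

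If you want to prove the injection (say for odd \(p\)), the cleanest path is the five-term sequence for \(1\to N\to G_K(p)\to S\to 1\), with \(N\) generated by the inertia groups:
\begin{enumerate}
\item Inflation \(H^2(S)\to H^2(G_K(p))\) is zero. At each finite place it factors through \(H^2(\mathbb{Z}_p,\mathbb{F}_p)=0\), and \(H^2(G_K(p),\mathbb{F}_p)\to\prod_v H^2(G_{K_v}(p),\mathbb{F}_p)\) is injective.
\item Hence \(H^2(S)\) is the cokernel of restriction \(H^1(G_K(p))\to H^1(N)^S\).
\item Local class field theory embeds \(H^1(N)^S\) into \(\mathrm{Hom}(U,\mathbb{F}_p)\), where \(U\) denotes the unit id\`eles.
\item Modulo the image of \(\mathrm{Hom}(C_K,\mathbb{F}_p)\), with \(C_K\) the id\`ele class group, this is dual to \(\ker(U/U^p\to C_K/C_K^p)\simeq V_K\). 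The last isomorphism uses that an element of \(K^\times\) which is a \(p\)-th power at every place is a global \(p\)-th power.
\end{enumerate}
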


\begin{proof}
There is a misprint in
\cite[Formula \((18^\prime)\)]{Sh1964}
which was corrected in
\cite[Thm. 5.1, p. 28]{Ma2015c}.
For \(\mathcal{S}=\emptyset\), the correction immediately yields the bounds in Formula
\eqref{eqn:Unramified}.
\end{proof}


\begin{definition}
\label{dfn:Schur}
A pro-\(p\)-group \(G\) is called a \textit{Schur \(\sigma\)-group},
or a group possessing a \textit{balanced presentation},
if \(d_2(G)=d_1(G)\),
and it is called a \textit{Schur\(+1\) \(\sigma\)-group}
if \(d_2(G)=d_1(G)+1\).
\end{definition}

\begin{corollary}
\label{cor:Quadratic}
Let \(p\) be an odd prime.
Then the \(p\)-class field tower group \(S=\mathrm{Gal}(\mathrm{F}_p^\infty(k)/k)\)
of a quadratic number field \(k=\mathbb{Q}(\sqrt{d})\)
must be a Schur \(\sigma\)-group,
when \(k\) is imaginary quadratic (with negative discriminant \(d<0\)),
and it may be a Schur\(+1\) \(\sigma\)-group or a Schur \(\sigma\)-group,
when \(k\) is real (with positive discriminant \(d>0\)).
\end{corollary}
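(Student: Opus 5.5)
The plan is to specialize Theorem \ref{thm:Unramified} to a quadratic field \(K=k=\mathbb{Q}(\sqrt{d})\) and an odd prime \(p\), and to add the \(\sigma\)-property from \S\ \ref{ss:Sigma}. There are three inputs.

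First, the root-of-unity indicator. A primitive \(p\)-th root of unity \(\zeta_p\) generates \(\mathbb{Q}(\zeta_p)\) of degree \(p-1\). This lies in \(k\) only if \(p-1\le 2\), that is \(p=3\), and then \(\mathbb{Q}(\zeta_3)=\mathbb{Q}(\sqrt{-3})\). So \(\theta=0\) for every \(p\ge 5\). For \(p=3\), \(\theta=1\) exactly when \(d=-3\). In that case \(\mathrm{Cl}(k)\) is trivial, hence \(\mathrm{Cl}_3(k)=1\) and \(S=1\), so \(d_1(S)=d_2(S)=0\) and the conclusion holds trivially. I will therefore either treat \(d=-3\) separately or tacitly assume \(d_1(S)\ge 1\), as in the paper's setting with \(\mathrm{Cl}_3(k)\simeq(\mathbb{Z}/3\mathbb{Z})^2\). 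In every other case \(\theta=0\).

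Second, the unit rank. If \(d<0\), the signature is \((0,1)\), so \(r=0\). If \(d>0\), the signature is \((2,0)\), so \(r=1\). Substituting into \eqref{eqn:Unramified} gives \(d_1(S)\le d_2(S)\le d_1(S)\) in the imaginary case, hence \(d_2(S)=d_1(S)\). In the real case it gives \(d_1(S)\le d_2(S)\le d_1(S)+1\). By Definition \ref{dfn:Schur}, \(S\) is then Schur or Schur\(+1\) in the two respective cases.

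Third, the \(\sigma\)-property. For a quadratic \(k\) and odd \(p\), \(S\) is a \(\sigma\)-group by the cited result of Koch--Venkov/Shafarevich type \cite[Lem. (4.1), (4.2)]{Sf1986} quoted in \S\ \ref{ss:Sigma}. Concretely, the nontrivial automorphism \(\tau\) of \(k/\mathbb{Q}\) lifts to \(\mathrm{F}_p^\infty(k)\), since that field is Galois over \(\mathbb{Q}\). Because \(p\) is odd, one may choose a lift whose conjugation action is an involution of \(S\). This involution acts by inversion on \(H^1(S,\mathbb{F}_p)\), since \(\tau\) acts as \(-1\) on \(\mathrm{Cl}_p(k)\) (the norm to \(\mathbb{Q}\) kills \(p\)-classes). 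It also acts by inversion on \(H^2\), which I simply take from the reference.

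The only delicate points are the \(d=-3\) exception to \(\theta=0\) and the \(H^2\) inversion. Both are handled by a short case check and by the citation, respectively. The rest is direct substitution.
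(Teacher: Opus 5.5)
Your proposal is correct and follows the paper's proof: you show $\theta=0$ except for $\mathbb{Q}(\sqrt{-3})$, which you dispose of because its $3$-class field tower is trivial, read off $r=0$ and $r=1$ from the signatures $(0,1)$ and $(2,0)$, and substitute into the Shafarevich bounds \eqref{eqn:Unramified}. Your additional sketch of the $\sigma$-automorphism (lifting the generator of $\mathrm{Gal}(k/\mathbb{Q})$ to an involution and using that it inverts $p$-classes) is a harmless extra, since the paper's proof simply takes the $\sigma$-property from the lemmas cited in \S\ \ref{ss:Sigma}.
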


\begin{proof}
Only for \(p=3\), the primitive \(p\)-th roots of unity
are contained in an imaginary quadratic field,
namely \(k=\mathbb{Q}(\sqrt{-3})\), but this field has
class number one and thus a trivial \(3\)-class field tower.
An imaginary quadratic field has the signature \((r_1,r_2)=(0,1)\),
its unit rank is \(r=0+1-1=0\), and the Shafarevich bounds become tight
\(d_1(S)\le d_2(S)\le d_1(S)+0+0=d_1(S)\), i. e.,
\(S\) has a balanced presentation with \(d_2(S)=d_1(S)\).
However, a real quadratic field,
which certainly cannot contain an imaginary primitive \(p\)-th root of unity for an odd prime \(p\),
has the signature \((r_1,r_2)=(2,0)\), unit rank \(r=2+0-1=1\) and the Shafarevich bounds are
\(d_1(S)\le d_2(S)\le d_1(S)+1+0=d_1(S)+1\), i. e.,
\(S\) may be a Schur\(+1\) \(\sigma\)-group or a Schur \(\sigma\)-group.
\end{proof}


\section{Finite bounds or potential infinitude ?}
\label{s:Infinity}

\noindent
There arises a basic partially \textit{mathematical} and partially \textit{philosophical} question: \\
Which of our results may be claimed \textit{for any value} of the state parameter \(n\) ?

\noindent
\(\bullet\) It is clear that our \textit{experimental} information about
concrete numerical realizations of our theoretical claims are very limited,
although they are based on the most extensive database
of relevant \textit{arithmetical} invariants for quadratic fields
\(k=\mathbb{Q}(\sqrt{d})\)
available currently.
It is due to Michael Raymond Bush in 2015, see subsection \S\ 
\ref{ss:Bush}. 
The \textit{highest excited state} for which we know a single numerical example
\(d=705\,576\,037\) of complex type H.4 is \(n=3\) (ES3).
The biggest example
\(d=336\,698\,284\) of a simple type E.14 has only \(n=2\) (ES2).
The characteristic \textit{polarizations} \(\alpha_1(k)\) of the states are
the \textbf{hetero}cyclic \(3\)-groups
\((27,9)\) for \(n=0\) (GS),
\((81,27)\) for \(n=1\) (ES1),
\((243,81)\) for \(n=2\) (ES2), and
\((729,243)\) for \(n=3\) (ES3),
whereas no real quadratic examples are known with
\((2187,729)\) for \(n=4\) (ES4).
So the bound \(n\le 3\) for the state would be sufficient
to explain all available experimental data
concerning prototypes and other paradigms in \S\
\ref{s:ProtoTypes}.

\noindent
\(\bullet\)
Our \textit{number theoretic} statements are
derived from \textit{group theoretic} theorems
by means of the Artin reciprocity law
\cite{Ar1927}.
So the former are subject to the same bounds as the latter.

\noindent
\(\bullet\)
By \textit{top-down techniques} we understand the construction of
finite \(3\)-quotients \(\mathcal{L}/\mathcal{U}\) of \textit{infinite limit groups}
\(\mathcal{U}\triangleleft\mathcal{L}\),
or also the \textit{parametrized pc-presentation} (power-commutator presentation)
of an infinite sequence \((M_n^i)_{n\ge 0}\) of finite \(3\)-groups.
Due to internal limitations of the maximal word-length in Magma
\cite{MAGMA2026},
the bound \(n\le 7\) cannot be surpassed
\cite[Rmk 3.1, p. 95]{Ma2018}.

\noindent
\(\bullet\)
By \textit{bottom-up techniques} we understand the construction of
descendants \(D=P-\#s;i\) (with step-size \(s\) and counter \(i\))
of finite \(3\)-groups \(P=\pi(D)\) (with parent-projection \(\pi\))
by means of the \(p\)-\textit{group generation} algorithm (or \textit{extension} algorithm)
by Newman and O'Brien
\cite{HEO2005, Nm1975, Ob1990}.
Here, no limitations of storage capacity (RAM) and no bounds for data types
arise in Magma
\cite{MAGMA2026}.
The amount of required CPU time, however, even on fast processors,
suggests reasonable bounds
\(n\le 19\) for difficult cases or at most \(n\le 29\) for easy cases (simple types)
\cite[Rmk 3.2, p. 96]{Ma2018}.

\noindent
\(\bullet\)
The connection between the \textit{state parameter} \(n\) and
logarithmic order (lo), nilpotency class (cl) and coclass (cc)
is given in Table
\ref{tbl:Invariants},
for simple and complex types separately.


\renewcommand{\arraystretch}{1.1}

\begin{table}[ht]
\caption{Connection between bounds for the state \(n\) and group invariants}
\label{tbl:Invariants}
\begin{center}
\begin{tabular}{|r||r|r|r||r|r|r||c|c|}
\hline
 State & \multicolumn{3}{|c||}{Simple} & \multicolumn{3}{|c||}{Complex} & Context & Theory of \\
 \(n\) & lo & cl & cc                  & lo & cl & cc                   & & \\
\hline
     2 & 14 &  9 &  5                  & 17 & 11 &  6                   & Prototypes, & Numbers \\
     3 & 17 & 11 &  6                  & 20 & 13 &  7                   & paradigms & \\
\hline
     6 & 26 & 17 &  9                  & 29 & 19 & 10                   & Top-down & Groups \\
     7 & 29 & 19 & 10                  & 32 & 21 & 11                   & methods & \\
\hline
    18 & 62 & 41 & 21                  & 65 & 43 & 22                   & difficult & Groups \\
    19 & 65 & 43 & 22                  & 68 & 45 & 23                   & Bottom-up & \\
\hline
    28 & 92 & 61 & 31                  & 95 & 63 & 32                   & easy & Groups \\
    29 & 95 & 63 & 32                  & 98 & 65 & 33                   & Bottom-up & \\
\hline
\end{tabular}
\end{center}
\end{table}


\noindent
\(\bullet\)
In spite of the finite bounds which are necessary for computations with
computer algebra systems like Magma
\cite{MAGMA2026},
it is our philosophical conviction that knowledge of
either an infinite limit group or a parametrized pc-presentation
\textit{justifies statements involving potential infinitude}. 

\noindent
Based on this opinion, we now state our main theorems in section \S\
\ref{s:Length}
and we give proofs in \S\
\ref{s:Proofs}.


\section{Length of \(3\)-class field towers}
\label{s:Length}

\subsection{Length for simple types}
\label{ss:SimpleLength}

\noindent
There is a remarkable difference between the
\textit{simple} types 
E.6: \(\varkappa(k)\sim(1122)\),
E.8: \(\varkappa(k)\sim(1231)\),
E.9: \(\varkappa(k)\sim(2231)\),
E.14: \(\varkappa(k)\sim(3122)\),
in section E of the paper
\cite[p. 36]{SoTa1934}
by Scholz and Taussky, and the
\textit{complex} types 
G.16: \(\varkappa(k)\sim(4231)\),
H.4: \(\varkappa(k)\sim(2122)\)
\cite[pp. 36--38]{SoTa1934}.
For simple types,
a non-metabelian \(3\)-class field tower
has \textit{precisely} three stages, \(\ell_3(k)=3\),
whereas for complex types, 
the length \(\ell_3(k)\ge 3\) is unbounded. 

\begin{theorem}
\label{thm:SimpleStageCriterion}
For \textbf{simple} types,
the length \(\ell_3(k)\) of the Hilbert \(3\)-class field tower of \(k\) is given as follows.
For an imaginary quadratic field \(k\) with \(d<0\), there are always three stages, \(\ell_3(k)=3\).
For a real quadratic field \(k\) with \(d>0\),
the \textbf{abelian type invariants of second order} \(\alpha^{(2)}(k)\) are required for the distinction:
for the \textbf{simple types} E.6: \((1122)\) and E.14: \((3122)\), we have two stages, \(\ell_3(k)=2\), if and only if

\begin{equation}
\label{eqn:TwoStagesQ}
\alpha^{(2)}(k)=(\overbrace{\lbrack(n+3,n+2);\alpha_0,(n+3,n+1,1)^3\rbrack}^{\text{Heterocyclic Polarization}},
\quad\overbrace{\lbrack 1^3;\alpha_0,\mathbf{(1^3)^3},(1^2)^9\rbrack,\quad\lbrack 21;\alpha_0,\mathbf{(21)^3}\rbrack^2}^{\text{Stabilization}}),
\end{equation}

\noindent
and we have three stages, \(\ell_3(k)=3\), if and only if

\begin{equation}
\label{eqn:ThreeStagesQ}
\alpha^{(2)}(k)=(\overbrace{\lbrack(n+3,n+2);\alpha_0,(n+3,n+1,1)^3\rbrack}^{\text{Heterocyclic Polarization}},
\quad\overbrace{\lbrack 1^3;\alpha_0,\mathbf{(21^2)^3},(1^2)^9\rbrack,\quad\lbrack 21;\alpha_0,\mathbf{(31)^3}\rbrack^2}^{\text{Stabilization}});
\end{equation}

\noindent
for the \textbf{simple types} E.8: \((1231)\) and E.9: \((2231)\), we have two stages, \(\ell_3(k)=2\), if and only if

\begin{equation}
\label{eqn:TwoStagesU}
\alpha^{(2)}(k)=(\overbrace{\lbrack(n+3,n+2);\alpha_0,(n+3,n+1,1)^3\rbrack}^{\text{Heterocyclic Polarization}},
\quad\overbrace{\lbrack 21;\alpha_0,\mathbf{(21)^3}\rbrack^3}^{\text{Stabilization}}),
\end{equation}

\noindent
and we have three stages, \(\ell_3(k)=3\), if and only if

\begin{equation}
\label{eqn:ThreeStagesU}
\alpha^{(2)}(k)=(\overbrace{\lbrack(n+3,n+2);\alpha_0,(n+3,n+1,1)^3\rbrack}^{\text{Heterocyclic Polarization}},
\quad\overbrace{\lbrack 21;\alpha_0,\mathbf{(31)^3}\rbrack^3}^{\text{Stabilization}});
\end{equation}

\noindent
\textbf{for each state}, parametrized by the integer \(n\ge 0\) (ground state \(n=0\), excited states \(n\ge 1\)). \\
Uniformly, the ATI of the first Hilbert \(3\)-class field \(\mathrm{F}_3^1(k)\) are denoted by \(\alpha_0:=((n+2)^2,1)\).
\end{theorem}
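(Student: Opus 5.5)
The argument reduces the arithmetic statement to group theory via the Artin reciprocity law. The ATI2 of \(k\) are the abelian quotient invariants of second order of \(S\): for each maximal subgroup \(H_i<S\) of index \(3\) (corresponding to \(E_i\)), we have \(\mathrm{Cl}_3(E_i)\simeq H_i/H_i^\prime\). Moreover, for each maximal subgroup \(H_{i,j}<H_i\) (corresponding to \(E_{i,j}\)), we have \(\mathrm{Cl}_3(E_{i,j})\simeq H_{i,j}/H_{i,j}^\prime\). Since every \(H_{i,j}\) contains \(S^\prime\), and \(H_{i,j}/H_{i,j}^\prime\) is a quotient of \(S/S^{\prime\prime\prime}\), these invariants are determined by the third derived quotient \(S/S^{(3)}\). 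They need not be determined by \(M=S/S^{\prime\prime}\). This is exactly why first order ATI, which determine only the state \(n\) and the position of \(M\) with \(\mathrm{dp}(M)=1\) on the coclass tree of \(\langle 243,6\rangle\) or \(\langle 243,8\rangle\), cannot decide the length. The ATI2 can. The plan is therefore:

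\textbf{Step 1: the metabelian candidates.} Use the known classification of \(\sigma\)-groups \(M\) of coclass \(2\) with the given TKT and ATI \(\alpha(k)\) in state \(n\). These are the vertices of depth \(1\) and class \(5+2n\) branching off the mainline. Compute \(d_2(M)\) via the \(p\)-multiplicator rank. Then show that for the simple types \(M\) has relation rank \(d_2(M)=3\). Hence \(M\) is a Schur\(+1\) \(\sigma\)-group, so by Corollary~\ref{cor:Quadratic} the equality \(S=M\) is impossible for \(d<0\) and possible for \(d>0\).

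\textbf{Step 2: the non-metabelian candidates.} Classify the non-metabelian Schur \(\sigma\)-groups \(S\) with \(S/S^{\prime\prime}\simeq M\). Following the introduction, these are the terminal leaves of step size \(2\) and depth \(1\) branching off the infinite non-metabelian path in \(\mathcal{T}(Q)\) or \(\mathcal{T}(U)\). The path starts at \(Q=\langle 729,49\rangle\) or \(U=\langle 729,54\rangle\) and has alternating step sizes \(2,1\). Using the \(p\)-group generation algorithm \cite{Nm1975,Ob1990} bottom-up, together with a parametrized pc-presentation covering all \(n\), show the following. For each \(n\) there is a unique such \(S\) up to isomorphism (or a uniquely determined Artin pattern). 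It satisfies \(\mathrm{dl}(S)=3\) and \(d_2(S)=2\), so \(\ell_3(k)=3\) exactly. For the cover of \(M\), show there is no Schur \(\sigma\)-group of larger derived length. Combined with Step 1, this proves that for \(d<0\) we always have \(S\) non-metabelian of derived length \(3\), hence \(\ell_3(k)=3\). For \(d>0\), the only options are \(S=M\) or this unique \(S\).

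\textbf{Step 3: separating the two candidates by ATI2.} Compute the ATI2 of \(M\) and of the non-metabelian \(S\). The polarized component \(H_1\) and the first Hilbert class field component \(\alpha_0=((n+2)^2,1)\), which comes from \(S^\prime/S^{\prime\prime}\), coincide for both. The difference appears in the "stabilization" components. In \(M\), the relevant subgroups \(H_{i,j}\) have abelianizations \(1^3\) and \(21\). In \(S\), the extra factor \(S^{\prime\prime}\), of order \(3\), survives in these abelianizations and raises them to \(21^2\) and \(31\). Since these invariants are constant along the periodic branches, a parametrized presentation yields the claim for every \(n\). Then the forms \eqref{eqn:TwoStagesQ}--\eqref{eqn:ThreeStagesU} characterize the two cases. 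The "if and only if" follows because the two candidates exhaust the possibilities and have distinct ATI2.

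The main obstacle is Step 2 uniformly in \(n\). We must prove that the non-metabelian \(\sigma\)-descendants are exactly the described leaves, and that no deeper Schur \(\sigma\)-groups with metabelianization \(M\) exist, which is what makes the length exactly \(3\). I would try to handle this with an infinite limit pro-\(3\) group and its periodicity of length \(2\), as suggested in \S\ref{s:Infinity}. This would verify termination of the \(\sigma\)-descendant search for a few small \(n\) and then transfer the result by periodicity.
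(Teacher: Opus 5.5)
Your overall architecture matches the paper. You read the ATI2 off $S/S^{(3)}$, identify $M=M_n^i$ as a depth-$1$ leaf with $d_2(M)=3$, and settle $d<0$ by a unique Schur $\sigma$-group $S_n^i$ in the cover. However, Step~2 has a genuine gap for real quadratic fields. You classify only the non-metabelian \emph{Schur} $\sigma$-groups with metabelianization $M$, and then assert that for $d>0$ the only options are $S=M$ or $S=S_n^i$. By Corollary~\ref{cor:Quadratic}, a real quadratic field also admits tower groups with $d_2(S)=3$. For every excited state $n\ge 1$, the cover $\mathrm{cov}(M_n^i)$ contains $n$ further non-metabelian Schur$+1$ $\sigma$-groups $T_{n,u}^i=F(-\#2;1-\#1;1)^u(-\#1;1-\#1;1)^{n-u}\lbrack-\#1;i\rbrack$, $1\le u\le n$. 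Each of these is admissible as $\mathrm{Gal}(\mathrm{F}_3^\infty(k)/k)$.

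So your closing claim that ``the two candidates exhaust the possibilities'' is false, and neither equivalence follows. For ``tame ATI2 $\Rightarrow$ $\ell_3(k)=2$'' you need that \emph{every} non-metabelian $\sigma$-group with $d_2\le 3$ and metabelianization $M$ shows the wild stabilization. For $\ell_3(k)=3$, rather than merely $\ell_3(k)\ge 3$, you need that every such group has derived length exactly $3$. Correspondingly, your termination requirement ``no Schur $\sigma$-group of larger derived length'' must be extended to all $\sigma$-groups with $d_2\le 3$. The paper closes exactly this point by determining the full finite cover $\mathrm{cov}(M_n^i)=\{T_{n,1}^i,\dots,T_{n,n}^i,S_n^i\}$ (Theorem~\ref{thm:ExplicitCovers}). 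It then computes $\alpha^{(2)}$ for all of $M_n^i$, $S_n^i$ and $T_{n,u}^i$, and the $T_{n,u}^i$ share the wild stabilization of $S_n^i$.

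There is a secondary error in Step~3: $S''$ has order $3$ only in the ground state. Since $\mathrm{lo}(S_n^i)=3n+8$ and $\mathrm{lo}(M_n^i)=2n+7$, one has $|S''|=3^{n+1}$, and $|T''|=3^{2u}$ for $T=T_{n,u}^i$. Nevertheless, the stabilization components grow by exactly one factor $3$ ($1^3\to 21^2$, $21\to 31$) in all these cases. So the mechanism is not that ``$S''$ survives'' in $H_{i,j}/H_{i,j}^\prime$. These abelianizations must be computed explicitly for each family, for instance from parametrized presentations of $S_n^i$ and $T_{n,u}^i$. That is also where uniformity in both $n$ and $u$ has to be secured.
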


\noindent
Theorem
\ref{thm:SimpleStageCriterion}
only gives the connection between the length \(\ell_3(k)\) and the ATI2 \(\alpha^{(2)}(k)\).
So we need additional information on the Galois groups \(M=\mathrm{Gal}(\mathrm{F}_3^2(k)/k)\)
and \(S=\mathrm{Gal}(\mathrm{F}_3^\infty(k)/k)\)
in the case of non-coincidence, i.e., \(\ell_3(k)=3\).
After some definitions, this will be done in Theorem
\ref{thm:SimplePathStructure}.


\smallskip
\noindent
A transfer kernel type (TKT) \(\varkappa=(\varkappa_1,\ldots,\varkappa_4)\) is called
\textbf{skeleton type} if a single component \(\varkappa_i=0\) is a total capitulation, and
\textbf{hull type} if all components \(\varkappa_i\in\lbrace 1,\ldots,4\rbrace\) are partial capitulations.
For an imaginary quadratic field \(k\), the capitulation type 
\(\varkappa(k)\) must be a hull type.

\begin{definition}
\label{dfn:Cover}
By the \textbf{cover} of a finite metabelian \(3\)-group \(M\)
(terminology due to Mike F. Newman),
we understand the collection of all (isomorphism classes of) finite non-metabelian \(\sigma\)-groups \(S\),
having \(M\) isomorphic to their second derived quotient, i.e., their metabelianization,

\begin{equation}
\label{eqn:Cover}
\mathrm{cov}(M):=\lbrace S\mid\mathrm{dl}(S)\ge 3,\ S/S^{\prime\prime}\simeq M\rbrace, \ 
\mathrm{cov}_\ast(M):=\lbrace S\in\mathrm{cov}(M)\mid S \text{ is a Schur }\sigma\text{-group}\rbrace.
\end{equation}
\end{definition}

\noindent
In the sequel, we are going to use
the ANUPQ-notation of \textbf{iterated descendant groups} by \textbf{relative identifiers},
\lq\lq descendant \(=\) parent\(-\#s;j\)\rq\rq
\cite{GNO2006}
with step size \(1\le s\le\nu\), bounded by the nuclear rank \(\nu\),
and counter \(1\le j\le N_s\), bounded by the number of \(s\)-descendants \(N_s\).


\begin{theorem}
\label{thm:SimplePathStructure}
(Structure of \(3\)-class field towers with precisely three stages for \textbf{simple} types.)
There exists a unique \(\sigma\)-group \(F\), the \textbf{fork},
with nilpotency class \(\mathrm{cl}(F)=4\) and nuclear rank \(\nu(F)=2\), causing a bifurcation,
such that the path between the metabelian group \(M=S/S^{\prime\prime}\) with relation rank \(d_2(M)=3\)
and the non-metabelian Schur \(\sigma\)-group \(S\) of relation rank \(d_2(M)=2\) in the descendant tree of
\(\mathrm{SmallGroup}(9,2)=(\mathbb{Z}/3\mathbb{Z})^2\)
is given by the union of the two paths (Fig.
\ref{fig:TreeStrucSimple})

\begin{equation}
\label{eqn:SimplePath}
M=M_n^i:=F(-\#1;1-\#1;1)^n\lbrack-\#1;i\rbrack \quad
\text{ and } \quad
S=S_n^i:=F(-\#2;1-\#1;1)^n\lbrack-\#2;i\rbrack
\end{equation}

\noindent
where the parentheses \((\,)\) include vertices of skeleton type
and the brackets \(\lbrack\,\rbrack\) include vertices of simple hull type.
The final counter \(i=2,3,4\) depends on the capitulation type,
E.6 for \(i=2\), E.14 for \(i=3,4\) on the \(Q\)-tree, and
E.8 for \(i=2\), E.9 for \(i=3,4\) on the \(U\)-tree.
The exponent \(n\) of the skeleton part expresses iteration and
is the non-negative integer in the nilpotency class \(\mathrm{cl}(M)=\mathrm{cl}(S)=5+2n\),
i.e., the \textbf{state parameter}.
Finally, \(F\) is the fork between \(M\) and \(S\).
In the case of a \textbf{real} quadratic field \(k\),
the metabelian group \(M\) itself and, if \(n\ge 1\) (ES!),
additional non-metabelian Schur\(+1\) \(\sigma\)-groups
(all of them with relation rank \(d_2=3\)),
are admissible for \(S\):

\begin{equation}
\label{eqn:SimplePathReal}
S=T_{n,u}^i:=F(-\#2;1-\#1;1)^u(-\#1;1-\#1;1)^{n-u}\lbrack-\#1;i\rbrack \text{ with } 1\le u\le n, \text{ i.e.}
\end{equation}

\noindent
ostensively: if \(v:=n-u\), then the pairs \((u,v)\)
satisfy \(u+v=n\) and run through \((1,n-1),\ldots,(n,0)\).

\noindent
The cover of the second \(3\)-class group \(M_n^i\)
with \(i=2,3,4\) becomes bigger
when the state parameter \(n\ge 0\) increases.
It has \(n+1\) elements, and so it is a singleton in the ground state (GS), \(n=0\):

\begin{equation}
\label{eqn:CovMSimple}
\mathrm{cov}(M_n^i)=\lbrace T_{n,u}^i\mid 1\le u\le n\rbrace\cup\lbrace S_n^i\rbrace \quad \text{ and } \quad
\mathrm{cov}_\ast(M_n^i)=\lbrace S_n^i\rbrace.
\end{equation}
\end{theorem}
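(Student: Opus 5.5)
The plan is a group-theoretic proof by explicit construction in the descendant tree of \(\langle 9,2\rangle\). Because \(\mathrm{cl}(F)=4\) and \(F\) must be a common ancestor of \(Q=\langle 729,49\rangle\) and \(U=\langle 729,54\rangle\), the fork is forced to be the common parent \(\langle 243,3\rangle\) of class \(4\). I will first check that this group is a \(\sigma\)-group with nuclear rank \(\nu=2\). Its immediate descendants then split into the metabelian branch with step size \(s=1\), namely the coclass trees with roots \(\langle 243,6\rangle\) and \(\langle 243,8\rangle\), and the bifurcation branch with step size \(s=2\), which contains \(Q\) and \(U\). Uniqueness of \(F\) follows because these two branches separate at this vertex. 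The fork is the last common ancestor of \(M\) and \(S\), since \(M\) lies entirely on the \(s=1\) side and \(S\) on the \(s=2\) side.

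Next I will verify the paths in \eqref{eqn:SimplePath} using parametrized pc-presentations for \(M_n^i\) and \(S_n^i\), in the spirit of the top-down discussion in \S\ \ref{s:Infinity}.
\begin{itemize}
\item[(a)] Along the mainline \(F(-\#1;1-\#1;1)^n\) on the coclass-\(2\) trees, the vertices have skeleton type c.18 or c.21. The leaf \(-\#1;i\) with \(i=2,3,4\) has depth \(1\), the simple hull TKT E.6/E.14 or E.8/E.9, and the ATI \eqref{eqn:States}. I will read these invariants off from the periodicity of the coclass trees, with period length \(2\).
\item[(b)] Along the non-metabelian path \(F(-\#2;1-\#1;1)^n\), the step sizes strictly alternate between \(2\) and \(1\). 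I will show that the terminal \(-\#2;i\) is a Schur \(\sigma\)-group, i.e. \(d_2=2\).
\item[(c)] I will show \(S_n^i/(S_n^i)^{\prime\prime}\simeq M_n^i\) by computing \((S_n^i)^{\prime\prime}\) in the presentation: it is cyclic of order \(3\) and lies in the last lower central term. This gives \(\mathrm{dl}(S_n^i)=3\), and hence \(\ell_3(k)=3\) whenever \(S=S_n^i\).
\end{itemize}
For the real case the same computation applies to \(T_{n,u}^i\): the step-\(2\) part is used \(u\) times and the step-\(1\) continuation \(n-u\) times. Each \(T_{n,u}^i\) is a \(\sigma\)-group with \(d_2=3\) and metabelianization \(M_n^i\). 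For \(u=0\) the word gives \(M_n^i\) itself, which is admissible when \(d>0\) by Corollary \ref{cor:Quadratic}.

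To determine the cover \eqref{eqn:CovMSimple}, I will take any finite non-metabelian \(\sigma\)-group \(G\) with \(G/G^{\prime\prime}\simeq M_n^i\). Then \(G\) is a descendant of \(M_n^i\)'s ancestors with the same class, since the successive quotients \(G/\gamma_j(G)\) coincide with those of \(M\) until \(\gamma_j\) meets \(G^{\prime\prime}\). So \(G\) lies in the subtree of \(F\), and its path is a word in the two blocks \((-\#1;1-\#1;1)\) and \((-\#2;1-\#1;1)\). Here I need the key lemma that once the path leaves the metabelian mainline via a \(\#2\) step, it can never return to it. This forces every path to have the shape \((\#2\text{-block})^u(\#1\text{-block})^{n-u}\) followed by a final leaf. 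Enumerating the immediate descendants at each vertex with the \(p\)-group generation algorithm, for general \(n\) via periodicity, shows that the leaves with metabelianization \(M_n^i\) are exactly the \(T_{n,u}^i\) and \(S_n^i\), giving \(n+1\) elements. By Theorem \ref{thm:Unramified}, only \(S_n^i\) has \(d_2=d_1=2\), so \(\mathrm{cov}_\ast(M_n^i)=\{S_n^i\}\). Together with Corollary \ref{cor:Quadratic}, this forces \(S=S_n^i\) in the imaginary case.

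The main obstacle is making the enumeration uniform in \(n\). It requires proving the periodicity of both the metabelian coclass trees and the non-metabelian alternating path, including their nuclear ranks and step sizes. I would first try an infinite limit group \(\mathcal{L}\) with a parametrized presentation whose quotients \(\mathcal{L}/\gamma_c\) realize all \(S_n^i\) and \(T_{n,u}^i\), in the way the paper relies on such limits in \S\ \ref{s:Infinity}, and check the base cases computationally.
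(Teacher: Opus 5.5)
\textbf{The fork is misidentified, and everything downstream depends on it.}
The group \(\langle 243,3\rangle\) has order \(3^5\) and coclass \(2\), so its class is \(3\), not \(4\). It is also not the parent of \(\langle 243,6\rangle\) and \(\langle 243,8\rangle\) but their sibling: all of \(\langle 243,i\rangle\), \(3\le i\le 9\), are step-size-\(2\) children of \(\langle 27,3\rangle\) (beginning of \S\ \ref{s:Proofs}). A group of order \(3^5\) cannot have children of order \(3^5\), so the bifurcation you describe does not exist. Consequently \(\langle 243,3\rangle\) is an ancestor of neither \(Q\) nor \(U\). In fact \(Q\) and \(U\) have no common ancestor of class \(4\); their last common ancestor is \(\langle 27,3\rangle\).

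The word ``unique'' in the statement is meant tree by tree. The fork is the class-\(4\) mainline vertex \(X_4^2\):
\(F=Q=\langle 729,49\rangle=\langle 243,6\rangle-\#1;1\) for E.6 and E.14, and \(F=U=\langle 729,54\rangle=\langle 243,8\rangle-\#1;1\) for E.8 and E.9.
It has \(\mathrm{lo}(F)=6\) and \(\nu(F)=2\), and the two branches separate there:
the step-\(1\) child \(F-\#1;1=X_5^2\) (\(\langle 2187,285\rangle\), resp. \(\langle 2187,303\rangle\)) continues the coclass-\(2\) mainline carrying the leaves \(M_n^i\);
the step-\(2\) child \(F-\#2;1\) (\(\langle 6561,613\rangle\), resp. \(\langle 6561,621\rangle\)) starts the maintrunk of alternating step sizes carrying the \(S_n^i\).
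With your \(F\), the identifiers in \eqref{eqn:SimplePath} name the wrong groups. For example, \(F-\#1;i\) would have order \(3^6\), whereas \(M_0^i\in\lbrace\langle 2187,288\rangle,\langle 2187,304\rangle,\ldots\rbrace\) has order \(3^7\) (Theorem \ref{thm:ThreeTowerGround}). So \(\mathrm{lo}(M_n^i)=2n+7\) and \(\mathrm{cl}(M_n^i)=2n+5\) cannot come out, and your uniqueness argument concerns the wrong vertex.

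\textbf{Step (c) is wrong beyond the ground state.}
Since \(\mathrm{lo}(S_n^i)=3n+8\) and \(\mathrm{lo}(M_n^i)=2n+7\), one has \(\lvert(S_n^i)^{\prime\prime}\rvert=3^{n+1}\). Thus \((S_n^i)^{\prime\prime}\) is cyclic of order \(3\) only for \(n=0\), and a one-factor computation cannot give \(S_n^i/(S_n^i)^{\prime\prime}\simeq M_n^i\) uniformly. The paper obtains this from Newman's cover limit \(\mathcal{C}^{(e)}\), via \(S_n^i\simeq\mathcal{Q}_c^{(e,\ell)}\) with \(c=2n+5\) (Theorem \ref{thm:ExplicitCovers}). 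That is exactly the kind of limit group you propose at the end, so the paper's route is within reach of your plan.

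\textbf{Relation ranks come from multiplicators, not from Theorem \ref{thm:Unramified}.}
The Shafarevich bound computes no relation ranks; it only restricts which groups are admissible. What singles out \(S_n^i\) is the \(3\)-multiplicator rank \(\mu=d_2\): it equals \(2\) for \(S_n^i\) and \(3\) for \(M_n^i\) and the \(T_{n,u}^i\).

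\textbf{Your cover ancestor step can be made precise once \(F\) is corrected.}
Let \(P_j\) be the lower exponent-\(3\) central series with \(P_0(G)=G\). For a two-generator group \(G=\langle x,y\rangle\) one has \(G^\prime=\langle\lbrack y,x\rbrack\rangle\gamma_3(G)\). Hence
\(G^{\prime\prime}\le\lbrack\gamma_2(G),\gamma_3(G)\rbrack\le\gamma_5(G)\le P_4(G)\).
Therefore \(G/G^{\prime\prime}\simeq M_n^i\) forces \(G/P_4(G)\simeq M_n^i/P_4(M_n^i)\simeq F\), so every member of \(\mathrm{cov}(M_n^i)\) is a descendant of \(Q\), resp. \(U\).
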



\subsection{Length for complex types}
\label{ss:ComplexLength}

\noindent
As mentioned at the beginning of the previous subsection
\S\ \ref{ss:SimpleLength},
a non-metabelian \(3\)-class field tower
has \textit{precisely} three stages, \(\ell_3(k)=3\), for simple types,
whereas the length \(\ell_3(k)\ge 3\) is \textit{unbounded} for the
\textit{complex} types 
G.16: \(\varkappa(k)\sim(4231)\),
H.4: \(\varkappa(k)\sim(2122)\)
\cite[pp. 36--38]{SoTa1934}.

\begin{theorem}
\label{thm:ComplexStageCriterion}
For \textbf{complex} types,
the length \(\ell_3(k)\) of the Hilbert \(3\)-class field tower of \(k\) is given as follows.
For an imaginary quadratic field \(k\) with \(d<0\), there are always at least three stages, \(\ell_3(k)\ge 3\).
For a real quadratic field \(k\) with \(d>0\),
the \textbf{abelian type invariants of second order} \(\alpha^{(2)}(k)\) are required for the distinction:
for the \textbf{complex type} H.4, we have the following implications:

\noindent
\(\bullet\)
If we have two stages,
\(\ell_3(k)=2\),
then

\begin{equation}
\label{eqn:TwoOrThreeStagesO}
\alpha^{(2)}(k)=(\overbrace{\lbrack(n+3,n+2);\alpha_0,(n+\mathbf{3},n+1,1)^3\rbrack}^{\text{Heterocyclic Polarization}},
\quad\overbrace{\lbrack 1^3;\alpha_0,\mathbf{(1^3)^3},(1^2)^9\rbrack,\quad\lbrack 21;\alpha_0,\mathbf{(21)^3}\rbrack^2}^{\text{Stabilization}}).
\end{equation}

\noindent
\(\bullet\)
Conversely, if Equation
\eqref{eqn:TwoOrThreeStagesO}
holds, then we may have two or three stages,
\(\ell_3(k)\in\lbrace 2,3\rbrace\).

\noindent
\(\bullet\)
However, if one of the following three conditions holds

\begin{equation}
\label{eqn:MoreStagesO}
\begin{aligned}
\alpha^{(2)}(k) &= (\overbrace{\lbrack(n+3,n+2);\alpha_0,(n+\mathbf{4},n+1,1)^3\rbrack}^{\text{Heterocyclic Polarization}},
\quad\overbrace{\lbrack 1^3;\alpha_0,\mathbf{(1^3)^3},(1^2)^9\rbrack,\quad\lbrack 21;\alpha_0,\mathbf{(21)^3}\rbrack^2}^{\text{Stabilization}}), \text{ or} \\
\alpha^{(2)}(k) &= \left(\lbrack(n+3,n+2);\alpha_0,(n+\mathbf{3},n+1,1)^3\rbrack,
\quad\lbrack 1^3;\alpha_0,\mathbf{(21^2)^3},(1^2)^9\rbrack,\quad\lbrack 21;\alpha_0,\mathbf{(31)^3}\rbrack^2\right), \text{ or} \\
\alpha^{(2)}(k) &= \left(\lbrack(n+3,n+2);\alpha_0,(n+\mathbf{4},n+1,1)^3\rbrack,
\quad\lbrack 1^3;\alpha_0,\mathbf{(21^2)^3},(1^2)^9\rbrack,\quad\lbrack 21;\alpha_0,\mathbf{(31)^3}\rbrack^2\right),
\end{aligned}
\end{equation}

\noindent
then we certainly have at least three stages, \(\ell_3(k)\ge 3\); \\
for the \textbf{complex type} G.16, we have the following implications:

\noindent
\(\bullet\)
If we have two stages,
\(\ell_3(k)=2\),
then

\begin{equation}
\label{eqn:TwoOrThreeStagesS}
\alpha^{(2)}(k)=(\overbrace{\lbrack(n+3,n+2);\alpha_0,(n+\mathbf{3},n+1,1)^3\rbrack}^{\text{Heterocyclic Polarization}},
\quad\overbrace{\lbrack 21;\alpha_0,\mathbf{(21)^3}\rbrack^3}^{\text{Stabilization}}),
\end{equation}

\noindent
\(\bullet\)
Conversely, if Equation
\eqref{eqn:TwoOrThreeStagesS}
holds, then we may have two or three stages,
\(\ell_3(k)\in\lbrace 2,3\rbrace\).

\noindent
\(\bullet\)
However, if one of the following three conditions holds

\begin{equation}
\label{eqn:MoreStagesS}
\begin{aligned}
\alpha^{(2)}(k) &= (\overbrace{\lbrack(n+3,n+2);\alpha_0,(n+\mathbf{4},n+1,1)^3\rbrack}^{\text{Heterocyclic Polarization}},
\quad\overbrace{\lbrack 21;\alpha_0,\mathbf{(21)^3}\rbrack^3}^{\text{Stabilization}}), \text{ or} \\
\alpha^{(2)}(k) &= \left(\lbrack(n+3,n+2);\alpha_0,(n+\mathbf{3},n+1,1)^3\rbrack,
\quad\lbrack 21;\alpha_0,\mathbf{(31)^3}\rbrack^3\right), \text{ or} \\
\alpha^{(2)}(k) &= \left(\lbrack(n+3,n+2);\alpha_0,(n+\mathbf{4},n+1,1)^3\rbrack,
\quad\lbrack 21;\alpha_0,\mathbf{(31)^3}\rbrack^3\right),
\end{aligned}
\end{equation}

\noindent
then we certainly have at least three stages, \(\ell_3(k)\ge 3\). \\
The claims are proven for \textbf{bounded state},
parametrized by the integer \(n\) (ground state \(n=0\), excited states \(1\le n\le 4\)),
but we conjecture they hold for any state \(n\ge 0\). \\
Uniformly, the ATI of the first Hilbert \(3\)-class field \(\mathrm{F}_3^1(k)\) are denoted by \(\alpha_0:=(n+3,n+2,1)\).
\end{theorem}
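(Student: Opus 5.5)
The plan is to translate the arithmetic statement into group theory via the Artin reciprocity law, and then to read it off from the descendant trees of \(Q=\langle 729,49\rangle\) (type H.4) and \(U=\langle 729,54\rangle\) (type G.16). The steps below follow the order in which I would carry them out.

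First, the arithmetic input. The TKT \(\varkappa(k)\) and the ATI \(\alpha(k)\) with heterocyclic polarization \((n+3,n+2)\) and \(\alpha_0=(n+3,n+2,1)\) single out the possible metabelianizations \(M=S/S^{\prime\prime}\). These are the vertices of depth \(\mathrm{dp}(M)=2\) and even class \(\mathrm{cl}(M)=6+2n\) on the coclass trees of \(\langle 243,6\rangle\) and \(\langle 243,8\rangle\), respectively. The first ingredient is then Artin reciprocity. For every intermediate field \(k<E\le\mathrm{F}_3^1(k)\), it identifies \(\mathrm{Cl}_3(E)\) with \(H/H^\prime\), where \(H\) is the corresponding subgroup of \(S\). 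For the fields \(E_{i,j}\) of degree \(9\) over \(k\), \(H\) has index \(9\) in \(S\), and the quotient \(H/H^\prime\) is already visible in \(S/S^{\prime\prime\prime}\); in most cases it is even visible in the metabelian quotient \(S/(S^\prime)^\prime\) of a suitable extension. In this way \(\alpha^{(2)}(k)\) becomes the group invariant \(\alpha^{(2)}(S)\), i.e. the abelian quotient invariants of second order of the maximal subgroups and of the index-\(9\) subgroups. These second-order invariants are finer than the first-order Artin pattern, which cannot distinguish \(M\) from the groups in \(\mathrm{cov}(M)\).

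For the imaginary case I would combine Corollary \ref{cor:Quadratic} with the structure of \(M\). Corollary \ref{cor:Quadratic} forces \(d_2(S)=2\), so \(S\) must be a Schur \(\sigma\)-group. One then checks that the admissible \(M\) of complex type have relation rank \(d_2(M)\ge 3\), by computing the \(p\)-multiplicator rank from the pc-presentations. Hence \(M\) cannot be \(S\), so \(S\) is non-metabelian and \(\ell_3(k)\ge 3\).

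For the real case I would argue as follows.
\begin{itemize}
\item \emph{Necessity of \eqref{eqn:TwoOrThreeStagesO} and \eqref{eqn:TwoOrThreeStagesS}.} If \(\ell_3(k)=2\), then \(S=M\). So it suffices to compute \(\alpha^{(2)}(M)\) for the admissible metabelian vertices \(M\) of depth \(2\). Using the parametrized pc-presentations, or the bottom-up \(p\)-group generation for \(0\le n\le 4\), one verifies that this yields exactly the stated quartets. Among the three \(E_{1,j}\), the relevant component is \((n+3,n+1,1)\), and the stabilization components are \((1^3)^3\) and \((21)^3\).
\item \emph{Partial converse.} I would exhibit, for each \(n\), a non-metabelian element \(S\) of \(\mathrm{cov}(M)\) with the same \(\alpha^{(2)}\), and relation rank \(3\) or \(2\), admissible for real fields by Corollary \ref{cor:Quadratic}. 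This shows that the condition cannot be sharpened to an equivalence. It is exactly here that the complex types differ from Theorem \ref{thm:SimpleStageCriterion}: the covers are no longer separated by \(\alpha^{(2)}\).
\item \emph{Sufficient conditions \eqref{eqn:MoreStagesO} and \eqref{eqn:MoreStagesS}.} Each of the three displayed quartets differs from the metabelian quartet in at least one bold entry. So none of them can be \(\alpha^{(2)}(M)\) for the admissible \(M\), which means \(S\ne M\) and \(\ell_3(k)\ge 3\). Realizability of these patterns by groups in \(\mathrm{cov}(M)\), found along the non-metabelian paths with alternating step sizes from \(Q\) and \(U\), confirms that the cases genuinely occur, but is not needed for the implication.
\end{itemize}

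The main obstacle is the exhaustive computation over the relevant finite trees. For each \(n\le 4\), one has to enumerate all \(\sigma\)-descendants with the correct metabelianization, including non-metabelian ones of odd depth \(\ge 3\) and higher derived length, and compute their \(\alpha^{(2)}\). This covers all candidates only because the cover is controlled up to the class bound imposed by \(\mathrm{cl}(M)=6+2n\). I would first try to organize this enumeration by periodicity of the coclass trees, which would reduce it to finitely many patterns. However, the possible unboundedness of \(\mathrm{dl}(S)\) makes a uniform argument unavailable. This is why the claim is restricted to bounded state \(n\le 4\), as the statement itself says.
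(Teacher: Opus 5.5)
Most of your plan is sound: the imaginary case via \(d_2(M)=3\) against Corollary~\ref{cor:Quadratic}, the necessity bullet via \(\alpha^{(2)}(M_n^5)=\alpha^{(2)}(M_n^6)\), and your observation that the three patterns in \eqref{eqn:MoreStagesO} and \eqref{eqn:MoreStagesS} force \(\ell_3(k)\ge 3\) simply as the contrapositive of necessity. That last point is leaner than reading them off the full candidate list, as the paper does. There is a small slip: the \(E_{i,j}\) other than \(\mathrm{F}_3^1(k)\) are not subfields of \(\mathrm{F}_3^1(k)\), so Artin reciprocity should be applied to finite \(E\) with \(k<E\le\mathrm{F}_3^\infty(k)\).

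The gap is the converse bullet. Its content is \(\ell_3(k)\in\lbrace 2,3\rbrace\), in particular the upper bound \(\ell_3(k)\le 3\) whenever \eqref{eqn:TwoOrThreeStagesO}, resp. \eqref{eqn:TwoOrThreeStagesS}, holds. This bound is exactly what separates it from the mere \(\ell_3(k)\ge 3\) of the third bullet. Exhibiting one non-metabelian member of \(\mathrm{cov}(M)\) with the same \(\alpha^{(2)}\) only shows that three stages can occur. You must prove that every admissible \(S\) (every \(\sigma\)-group with \(d_2(S)\le 3\) and \(S/S^{\prime\prime}\simeq M_n^i\)) whose second-order pattern has tame polarization and tame stabilization has derived length at most \(3\).

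Your closing paragraph suggests doing this by exhaustive enumeration, but the finiteness you rely on is false: the cover is not controlled by \(\mathrm{cl}(M)=6+2n\). By Theorem~\ref{thm:ComplexPathStructure}, \(\mathrm{cov}(M_n^i)\) is infinite already for each fixed \(n\). The Schur \(\sigma\)-groups \(S_{n,t}^i\) have class \(7+2t+2n\) for every \(t\ge 0\), and the branch \(\mathcal{T}_0(\pi(S_{n,0}^i))\) presumably has unbounded soluble length. So restricting to \(n\le 4\) does not make the search finite.

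The missing idea is the structural split of the candidates that the paper uses. Besides \(M\) itself, there is the child topology: the terminal descendants \(R_{n,j}^i=M_n^i-\#1;j\), of derived length exactly \(3\). Their polarization is wild for \(j=1,2\) (H.4) and \(j=1\) (G.16), and tame only for \(j=3\) (H.4) and \(j=2\) (G.16). Then there is the fork topology: the saplings \(\mathcal{T}_0(T_{n,u}^i)\) and the entire infinite branch through the \(S_{n,t}^i\). According to the paper, all these candidates carry the wild stabilization \(\lbrack 21;\alpha_0,(31)^3\rbrack\), and additionally \(\lbrack 1^3;\alpha_0,(21^2)^3,(1^2)^9\rbrack\) on the \(Q\)-tree.

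That exclusion has to be established for whole branches, not vertex by vertex. One way is to check the wild stabilization at the roots of the branches and then use that these abelian quotients of index-\(9\) subgroups cannot shrink when passing to descendants, since the ranks of the stabilization components stay fixed. Only then does the tame--tame pattern leave exactly \(M\) and \(R_{n,3}^i\), resp. \(R_{n,2}^i\). That yields \(\ell_3(k)\in\lbrace 2,3\rbrace\) and shows both lengths occur.
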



\begin{theorem}
\label{thm:ComplexPathStructure}
(Structure of \(3\)-class field towers with three or more stages for \textbf{complex} types.)
There exists a unique \(\sigma\)-group \(F\), the \textbf{fork},
with nilpotency class \(\mathrm{cl}(B)=4\) and nuclear rank \(\nu(B)=2\), causing bifurcation,
such that the path between the metabelian group \(M=S/S^{\prime\prime}\) with relation rank \(d_2(M)=3\)
and one of the \textbf{infinitely} many possible
non-metabelian Schur \(\sigma\)-groups \(S\) of relation rank \(d_2(M)=2\) 
in the descendant tree of
\(\mathrm{SmallGroup}(9,2)=(\mathbb{Z}/3\mathbb{Z})^2\)
is given by the union of the two paths (Fig.
\ref{fig:TreeStrucComplex})
\begin{equation}
\label{eqn:ComplexPath}
\begin{aligned}
M=M_n^i &= F(-\#1;1-\#1;1)^n\lbrack-\#1;i-\#1;1\rbrack \quad
\text{ and } \quad \\
S=S_{n,t}^i &= F(-\#2;1-\#1;1)^n\lbrack-\#2;i-\#1;1\rbrack\lbrack-\#2;1-\#1;1\rbrack^{t}\lbrack-\#2;2\rbrack
\end{aligned}
\end{equation}
where the parentheses \((\,)\) include vertices of skeleton type
and the brackets \(\lbrack\,\rbrack\) include vertices of complex hull type.
The intermediate counter \(i=5,6\) represents two possible variants,
which cannot be distinguished arithmetically.
The complex hull type is
H.4 on the \(Q\)-tree and
G.16 on the \(U\)-tree.
The exponent \(n\) of the skeleton part is the non-negative integer in the nilpotency class \(\mathrm{cl}(M)=6+2n\),
i.e., the \textbf{state parameter},
and the exponent \(t\) of the complex hull part is the non-negative integer in the nilpotency class \(\mathrm{cl}(S)=7+2t+2n\),
i.e., the \textbf{sub-state parameter}.
Finally, \(F\) is the fork between \(M\) and \(S\).
In the case of a \textbf{real} quadratic field \(k\),
additional Schur\(+1\) \(\sigma\)-groups are admissible for \(S\):
Firstly, as immediate descendants of \(M\) with \textbf{child topology},

\begin{equation}
\label{eqn:ComplexPathChild}
S=R_{n,j}^i:=M_n^i-\#1;j \quad \text{ with } j=1,2,3 \text{ for type H.4 }, j=1,2 \text{ for type G.16}.
\end{equation}

\noindent
Secondly, via the fork \(F\) with \textbf{fork topology}, if \(n\ge 1\),

\begin{equation}
\label{eqn:ComplexPathFork}
S=T_{n,u}^i:=F(-\#2;1-\#1;1)^u(-\#1;1-\#1;1)^{n-u}\lbrack-\#1;i-\#1;1\rbrack \text{ with } 1\le u\le n, \text{ i.e.}
\end{equation}

\noindent
ostensively: if \(v:=n-u\), then the pairs \((u,v)\)
satisfy \(u+v=n\) and run through \((1,n-1),\ldots,(n,0)\).

\noindent
The cover of the second \(3\)-class group \(M_n^i\)
with \(i=5,6\) is always infinite,
and its components becomes bigger
when the state parameter \(n\ge 0\) increases:

\begin{equation}
\label{eqn:CovMComplex}
\begin{aligned}
\mathrm{cov}(M_n^i) =
& \lbrace R_{n,j}^i\mid j=1,2(,3)\rbrace\quad\cup \\
& \bigcup\lbrace\mathcal{T}_0(T_{n,u}^i)\mid 1\le u\le n\rbrace\quad\cup \\
& \bigcup\lbrace\mathcal{T}_0(\pi(S_{n,t}^i))\setminus\mathcal{T}_0(\pi(S_{n,t+1}^i))\mid t\ge 0\rbrace
\quad \text{ and } \\
\mathrm{cov}_\ast(M_n^i) =
& \lbrace S_{n,t}^i\mid t\ge 0\rbrace,
\end{aligned}
\end{equation}

\noindent
where \(\pi\) denotes the parent-operator,
and \(\mathcal{T}_0\) is the pruned descendant tree restricted to \(\sigma\)-groups.
The \(\mathcal{T}_0(T_{n,u}^i)\) are \(n\) finite saplings whose depth \(\mathrm{dp}=3+2u\) increases with \(u\).
The differences \(\mathcal{T}_0(\pi(S_{n,t}^i))\setminus\mathcal{T}_0(\pi(S_{n,t+1}^i))\)
are successive finite sections of the infinite branch \(\mathcal{T}_0(\pi(S_{n,0}^i))\),
whose depth \(\mathrm{dp}=3+2n\) increases with \(n\).
Their vertices conjecturally possess unbounded soluble length. \\
The claims are proved for \textbf{bounded state},
parametrized by the integer \(n\) (ground state \(n=0\), excited states \(1\le n\le 4\)),
but we conjecture they hold for any state \(n\ge 0\). \\
\end{theorem}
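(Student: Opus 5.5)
The proof is computational throughout. It combines the $p$-group generation algorithm (bottom-up construction of descendants) with the $\sigma$-group and relation-rank filters of \S\ref{s:GroupTheory}.

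\textbf{Fork and metabelian line.} We start from the roots $Q=\langle 729,49\rangle$ for H.4 and $U=\langle 729,54\rangle$ for G.16. The first step is to identify the fork $F$ as the unique $\sigma$-descendant of class $4$ with nuclear rank $\nu(F)=2$; it is the vertex where the metabelian coclass-$2$ mainline and the non-metabelian step-size-$2$ path separate. We then iterate the skeleton blocks $(-\#1;1-\#1;1)$ along the coclass tree. At class $5+2n$ we take the complex-hull children with counters $i=5,6$ and one further $-\#1;1$ step, which gives $M_n^i$ of class $6+2n$ and depth $2$. We then verify for $M_n^i$:
\begin{itemize}
\item the TKT H.4 or G.16;
\item the ATI $\lbrack(n+3,n+2),1^3,(21)^2\rbrack$ or $\lbrack(n+3,n+2),(21)^3\rbrack$;
\item $d_2(M_n^i)=3$.
\end{itemize}
Since the two counters $i=5,6$ give identical Artin patterns, they cannot be distinguished arithmetically.

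\textbf{Non-metabelian branch and the cover.} Along the path $F(-\#2;1-\#1;1)^n\lbrack-\#2;i-\#1;1\rbrack$ every vertex must have derived length $3$ and metabelianization isomorphic to $M_n^i$. This is checked by computing $G/G''$ and testing isomorphism with $M_n^i$, for instance via standard presentations. Beyond that vertex we construct the complex-hull chain $\lbrack-\#2;1-\#1;1\rbrack^t$ and its terminal leaves $-\#2;2$. For each leaf we check three things: it is a $\sigma$-group, it has $d_2=2$ (so it is a Schur $\sigma$-group), and its class is $7+2t+2n$. Together these show that $\mathrm{cov}_\ast(M_n^i)=\{S_{n,t}^i\mid t\ge 0\}$ is infinite.

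For real quadratic fields we additionally enumerate $\sigma$-groups with $d_2=3$ in three places:
\begin{itemize}
\item the immediate descendants $R_{n,j}^i$ of $M_n^i$, with $j\le 3$ for H.4 and $j\le 2$ for G.16;
\item the saplings $\mathcal{T}_0(T_{n,u}^i)$, obtained by switching from step size $2$ back to step size $1$ after $u$ blocks;
\item the sections $\mathcal{T}_0(\pi(S_{n,t}^i))\setminus\mathcal{T}_0(\pi(S_{n,t+1}^i))$ of the infinite branch.
\end{itemize}
In each case we confirm the metabelianization and derived length, and we read off the depths $3+2u$ and $3+2n$.

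\textbf{Completeness (main obstacle).} The hardest step is showing that nothing else belongs to $\mathrm{cov}(M_n^i)$, i.e. that every descendant of $(\mathbb{Z}/3\mathbb{Z})^2$ with metabelianization $M_n^i$ lies in one of the listed sets. Here I would use monotonicity: the metabelianization of a descendant determines the metabelianizations of its ancestors. Hence any $S\in\mathrm{cov}(M_n^i)$ lies below a vertex $V$ whose quotient $V/V''$ is an ancestor of $M_n^i$. Consequently only vertices on the two paths above, and their $\sigma$-descendants, need to be searched. All branches leaving these paths can be pruned once their metabelianization leaves the ancestry of $M_n^i$.

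The infinitude in $t$ needs more than a finite search. I would exhibit periodicity of the branch $\mathcal{T}_0(\pi(S_{n,0}^i))$, with period $2$ in class, by using a parametrized pc-presentation in $t$. Periodicity in $n$ can only be established up to $n\le4$, in line with the bounded-state caveat of \S\ref{s:Infinity}. Uniqueness of $F$ follows because exactly one class-$4$ $\sigma$-vertex on the relevant tree has nuclear rank $2$.
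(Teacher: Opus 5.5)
Your overall route is the paper's. You use bottom-up $p$-group generation from $\langle 243,6\rangle$ resp.\ $\langle 243,8\rangle$, filter by the $\sigma$-property and by $d_2$, compare metabelianizations, and verify the non-metabelian part only for $0\le n\le 4$. The paper additionally obtains the shape of $M_n^i$ for every $n$ from the parametrized presentations of Propositions \ref{prp:TopDownQ} and \ref{prp:TopDownU}.

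Two steps are wrong as written. First, the fork is $Q=\langle 729,49\rangle$ resp.\ $U=\langle 729,54\rangle$ itself, i.e.\ $X_4^2=R(-\#1;1)$; it is not a class-$4$ descendant of $Q$ or $U$, since those have class at least $5$. That every $S\in\mathrm{cov}(M_n^i)$ passes through $F$ is in fact automatic. A two-generator group of class at most $4$ is metabelian, because $\gamma_2=\langle[x,y]\rangle\gamma_3$ gives $[\gamma_2,\gamma_2]\le[\gamma_2,\gamma_3]\le\gamma_5$. Hence $S/\gamma_5(S)\simeq M/\gamma_5(M)=F$. Second, your test that \emph{every} vertex along $F(-\#2;1-\#1;1)^n\lbrack-\#2;i-\#1;1\rbrack$ has derived length $3$ and metabelianization $M_n^i$ would fail. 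By your own monotonicity principle, the vertex of class $c$ on this path has metabelianization $M_n^i/\gamma_{c+1}(M_n^i)$. That quotient is the mainline vertex $X_c^2$ for $c\le 4+2n$ (for $c=4$ it is the metabelian $F$), and it is $\pi(M_n^i)$ for $c=5+2n$. Only the endpoint, of class $6+2n$, and the vertices below it can have metabelianization $M_n^i$.

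The substantive gap is the passage to all $t\ge 0$. The infinitude of $\mathrm{cov}(M_n^i)$ and $\mathrm{cov}_\ast(M_n^i)$, and the completeness of the stated union, both depend on it. You propose a parametrized pc-presentation in $t$ exhibiting period-$2$ periodicity of $\mathcal{T}_0(\pi(S_{n,0}^i))$, but nothing supplies it:
\begin{itemize}
\item Every block $\lbrack-\#2;1-\#1;1\rbrack$ raises the coclass by one. So the coclass-periodicity theorems cited in the paper, which it uses only for the coclass-$2$ groups $M_n^i$, do not apply.
\item Neither $\mathcal{L}_{\mp 1}$ nor $\mathcal{C}^{(e)}$ produces the $S_{n,t}^i$. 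The paper notes that the quotients $\mathcal{Q}_c^{(0,+1)}$ cannot be interpreted as covers of $M_n^i$. It leaves a suitable limit group, perhaps $3$-adic analytic in the spirit of $\mathrm{SL}_2(\mathbb{Z}_3)$, as an open problem.
\item The soluble length of $S_{n,t}^i$ grows with $t$ (Table \ref{tbl:SolubleLengthH4}). Such a presentation would therefore have to describe groups of unbounded derived length.
\end{itemize}
Likewise, monotonicity is only a pruning rule. ``Only the two paths need to be searched'' is the \emph{outcome} of an infinite search, not a consequence of the principle, so completeness again rests on the missing periodicity.

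The paper does not close this gap either. It states that the covers were investigated experimentally for $0\le n\le 4$, i.e.\ for finitely many $t$, and infers the infinite branch from observed periodic bifurcations. So your argument reaches the paper's level only if that step is downgraded to the same computational evidence. To make it a proof you would have to actually construct the limit group.
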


\noindent
To reduce the complexity of the tree diagram,
Schur\(+1\) \(\sigma\)-groups are not drawn in Figure
\ref{fig:TreeStrucComplex}.
More details are drawn in Figure
\ref{fig:ComplexTopologies}
for the ground state (GS) \(n=0\):
the vertices \(R_{0,j}^i\) with child topology
\(j=1,2,3\) for type H.4 (drawn), \(j=1,2\) for type G.16,
and the finite section
\(\mathcal{T}_0(\pi(S_{00}^i))\setminus\mathcal{T}_0(\pi(S_{01}^i))\)
with depth \(\mathrm{dp}=3\)
of the infinite branch \(\mathcal{T}_0(\pi(S_{00}^i))\)
with periodic bifurcations.
The vertices \(U_{0j}^i\), \(j=1,\ldots,4\), and \(V_{0}^i\)
refer to both complex hull types,
for the terminal leaves \(W_{0j}^i\), however,
we have \(j=1,2,3\) for type H.4 (drawn), but \(j=1,2\) only for type G.16


\section{Proofs concerning the length of \(3\)-class field towers}
\label{s:Proofs}

\noindent
First we show in two Propositions
\ref{prp:TopDownQ}
and
\ref{prp:TopDownU}
how the main lines and branches down to depth \(\mathrm{dp}=1\)
of both relevant trees, the \(Q\)-tree and the \(U\)-tree,
can be given either by parametrized polycyclic pc-presentations
or as quotients of infinite limit groups.
These propositions concern the metabelianizations \(M\)
with coclass \(\mathrm{cc}(M)=2\)
of simple and complex types simultaneously.

The reason why only the \(Q\)-tree and the \(U\)-tree
are relevant for our assigned six TKTs in Formula
\eqref{eqn:SimpleTKTComplexTKT}
is given with the aid of the \lq\lq main theorem on class and coclass from IPAD\rq\rq\
and its corollary
\cite[\S\ 4, Thm. 2 and Cor. 1]{AoMa2025}
in \S\
\ref{ss:Bush}:
after elimination of the finite \(3\)-groups \(G\) of maximal class,
i.e., coclass \(\mathrm{cc}(G)=1\),
among all descendants of the elementary bicyclic \(3\)-group
\(\mathrm{SmallGroup}(9,2)=(\mathbb{Z}/3\mathbb{Z})^2\),
we arrive at the seven groups in Hall's isoclinism family \(\Phi_6\)
by bifurcation from the extra-special \(3\)-group \(\langle 27,3\rangle\).
Then the \(\varepsilon\)-invariant of the IPAD
and the required shape of the six TKTs eliminates five of these groups,
\(\langle 243,i\rangle\) with \(i=3,4,5,7,9\),
and only \(\langle 243,6\rangle\), the parent of \(Q=\langle 729,49\rangle\),
and \(\langle 243,8\rangle\), the parent of \(U=\langle 729,54\rangle\),
remain admissible.


The following result shows that
certain \(3\)-groups of class at least \(5\)
on the coclass tree \(\mathcal{T}^2(\langle 243,6\rangle)\)
belong to \(6+4=10\) periodic coclass sequences (6 odd, 4 even)
with period length \(2\).

\begin{proposition}
\label{prp:TopDownQ}
For each integer \(c\ge 5\),
there are \(6\) \textbf{metabelian} descendants \(G\) of \(\langle 243,6\rangle\),
having nilpotency class \(\mathrm{cl}(G)=c\), coclass \(\mathrm{cc}(G)=2\), and order \(\lvert G\rvert=3^{c+2}\),
with two generators \(x,y\) and parametrized pc-presentation
(where \(t_3\) and \(s_j\) belong to the abelian subgroup \(G^\prime<G\))

\begin{equation}
\label{eqn:QG}
\begin{aligned}
  G   =   & \langle\ x,y,s_2,t_3,s_3,s_4,\ldots,s_c\ \mid \\
          & s_2=\lbrack y,x\rbrack,\ t_3=\lbrack s_2,y\rbrack,\ s_j=\lbrack s_{j-1},x\rbrack \text{ for } 3\le j\le c, \\
          & s_j^3=s_{j+2}^2s_{j+3} \text{ for } 2\le j\le c-3,\ s_{c-2}^3=s_c^2,\ t_3^3=1, \\
          & R(x)=1,\ R(y)=1\ \rangle, \\
\end{aligned}
\end{equation}

\noindent
where the relators \(R(x)\) and \(R(y)\) are given by

\begin{equation}
\label{eqn:QR}
\begin{aligned}
R(x) = &
\begin{cases}
x^3         & \text{ for } G \text{ of TKT } \mathrm{c}.18 \text{ or } \mathrm{H}.4, \text{ and} \\
x^3s_c^{-1} & \text{ for } G \text{ of TKT } \mathrm{E}.6 \text{ or } \mathrm{E}.14,
\end{cases}
\\
R(y) = &
\begin{cases}
y^3s_3^{-2}s_4^{-1}         & \text{ for } G \text{ of TKT } \mathrm{c}.18 \text{ or } \mathrm{E}.6, \text{ and either} \\
y^3s_3^{-2}s_4^{-1}s_c^{-1} & \text{ or } \\
y^3s_3^{-2}s_4^{-1}s_c^{-2} & \text{ for } G \text{ of TKT } \mathrm{H}.4 \text{ or } \mathrm{E}.14.
\end{cases}
\end{aligned}
\end{equation}

\noindent
For odd class \(c\ge 5\) the \(6\) groups are pairwise non-isomorphic \(\sigma\)-groups.\\
For even class \(c\ge 6\), the two pairs of groups sharing the same TKT
(\(\mathrm{H}.4\) and \(\mathrm{E}.14\))
are isomorphic, and thus only \(4\) groups are pairwise non-isomorphic,
and only the mainline group is a \(\sigma\)-group.
\end{proposition}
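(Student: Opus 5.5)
We work directly with the parametrized pc-presentation \eqref{eqn:QG}. The argument has three steps: consistency and order, identification of the TKT, and the isomorphism/$\sigma$-question.

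\emph{Consistency and order.} We show that the presentation \eqref{eqn:QG} is consistent and defines a group of order exactly $3^{c+2}$, class $c$ and coclass $2$. We fix the polycyclic series
\[
G=\langle x,y,\ldots\rangle>\langle y,s_2,\ldots\rangle>\langle s_2,t_3,s_3,\ldots\rangle>\ldots>\langle s_c\rangle>1
\]
with $c+2$ cyclic factors of order $3$. It then suffices to check the standard consistency conditions (the Wamsley/Vaughan-Lee test words) for the power-commutator relations. Because $G^\prime=\langle s_2,t_3,s_3,\ldots,s_c\rangle$ is abelian, the relations inside $G^\prime$ are compatible with the power relations $s_j^3=s_{j+2}^2s_{j+3}$ and $s_{c-2}^3=s_c^2$. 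These are exactly the relations of the maximal-class-like module $\mathbb{Z}_3[\zeta]$-quotient, where $s_2\mapsto 1$ and $x$ acts as multiplication by $\zeta$, $(1-\zeta)^2=-3\zeta$ up to units; we verify this by computing $(x-1)^3$ acting on $G^\prime$. The remaining checks are the overlaps involving $x^3$ and $y^3$. Since $R(x)$ and $R(y)$ only append central or near-central factors $s_c$, $s_3^{-2}s_4^{-1}$, the checks for all six variants reduce to the mainline case c.18 plus corrections by the central element $s_c$, which are trivially consistent.

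Alternatively, we realize $G$ as a quotient $\mathcal{L}/\mathcal{U}$ of the infinite metabelian limit group of the coclass tree $\mathcal{T}^2(\langle 243,6\rangle)$, in the spirit of the top-down techniques of \S\ref{s:Infinity}. For $c=5,6$ we confirm the groups against the SmallGroups identifiers, and the periodicity of length $2$ of coclass trees (du Sautoy, Eick--Leedham-Green) then extends the statement to all $c\ge 5$.

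\emph{TKT identification.} The four maximal subgroups are $H_1=\langle y,G^\prime\rangle$, $H_2=\langle x,G^\prime\rangle$, $H_3=\langle xy,G^\prime\rangle$ and $H_4=\langle xy^2,G^\prime\rangle$. Since $G$ is metabelian of coclass $2$, we compute each transfer $V_i\colon G/G^\prime\to H_i/H_i^\prime$ by the formula $V(g)=g^3$ for $g\in H_i$ and $V(g)=g^{1+h+h^2}$ for $g\notin H_i$, with $h$ a generator of $H_i$ modulo $G^\prime$. With commutator calculus in the abelian $G^\prime$, this reduces to determining whether $x^3$, $y^3$, $(xy)^3$ and $(xy^2)^3$ lie in $H_i^\prime$. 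The choice $R(x)=x^3$ versus $x^3s_c^{-1}$ decides whether the kernel for $H_2$ is total (c.18, H.4) or partial (E.6, E.14). The extra factor $s_c^{\pm1}$ in $R(y)$ permutes the kernels of $H_3$ and $H_4$, which produces $(2122)$/$(3122)$ instead of $(1122)$/$(0122)$.

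\emph{Isomorphisms and $\sigma$-automorphisms.} This is the main obstacle. For odd $c$ we construct $\sigma$ explicitly by $x\mapsto x^{-1}$, $y\mapsto y^{-1}$, and check that the relators are preserved. The $s_j$ transform as $s_j\mapsto s_j^{(-1)^{j}}$ up to higher terms, so $s_c\mapsto s_c^{-1}$ when $c$ is odd, and then $x^3s_c^{-1}\mapsto x^{-3}s_c$, which is consistent. This yields the $\sigma$-property. Pairwise non-isomorphism we obtain from distinct TKTs, together with distinct ATI or distinct power structure ($s_c$ versus $s_c^2$) within H.4 and E.14; the latter we verify by an invariant such as the number of elements of order $3$ in the coset of $y$.

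For even $c$ we have $s_c\mapsto s_c$ under $\sigma$, so the relator $x^3s_c^{-1}$ maps to $x^{-3}s_c^{-1}$, which is not a consequence of the relations, and no $\sigma$ exists except on the mainline. For the even case we also exhibit an explicit isomorphism $y\mapsto y^{-1}$ (or $x\mapsto x^{-1}$) interchanging the $s_c$ and $s_c^{2}$ variants. The delicate point is the non-existence of $\sigma$ for even $c$: we will argue that every automorphism acting as inversion on $G/G^\prime$ acts on the last lower central factor $\gamma_c(G)=\langle s_c\rangle$ by $(-1)^c$, which is a weight argument, and that this clashes with any relator containing $s_c$.
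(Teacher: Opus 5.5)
Your consistency argument is fine, and reducing the TKT to cubes of elements outside each $H_i$ is the same transfer computation that makes up the paper's entire proof. The genuine gap is in the isomorphism part.

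\textbf{Non-isomorphism of the pairs for odd $c$.} For odd $c$ you must separate the two H.4 groups and the two E.14 groups, and none of your tools does this.
\begin{itemize}
\item They have the same TKT (up to relabelling) and the same ATI. For H.4 the paper says explicitly that the two variants cannot be distinguished arithmetically.
\item The distinction ``$s_c$ versus $s_c^2$'' is a feature of the presentation, not an invariant: for even $c$ the same two presentations define isomorphic groups.
\item Your concrete invariant fails. For $h\in G'$ one has $(yh)^3=y^3h^3$, and $(G')^3=\gamma_4(G)$. So the cubes of the coset $yG'$ form $s_3^2\gamma_4(G)$ whatever the exponent $e$ in $y^3=s_3^2s_4s_c^{e}$ is. Hence $yG'$ contains no element of order $3$ in either group.
\end{itemize}

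\textbf{The missing idea: classify all candidate isomorphisms.} Two maximal subgroups are characteristic: $H_1=\langle y,G'\rangle$ is the only one with $\lvert H'\rvert=3$ (namely $H_1'=\langle t_3\rangle$), and $H_2=\langle x,G'\rangle$ is the only one with elementary abelianization of order $27$. Hence every isomorphism has the form $x\mapsto x^{\alpha}g$, $y\mapsto y^{\beta}h$ with $\alpha,\beta\in\{\pm1\}$ and $g,h\in G'$. Identify $\langle s_2,\ldots,s_c\rangle$ with $\mathbb{Z}_3[\zeta]/(\pi^{c-1})$, where $\pi=\zeta-1$, $s_j\leftrightarrow\pi^{j-2}$ and $x$ acts as $\zeta$. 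Then:
\begin{itemize}
\item $g$ plays no role;
\item $h$ contributes the same factor $h^3$ to both sides of the image of $R(y)$, because $(\zeta^{\alpha}-1)(\zeta^{-\alpha}-1)=3$;
\item what remains is $e\mapsto\alpha^{c+1}e$;
\item the relation $x^3=s_c$ additionally forces $\beta=\alpha^{c}$.
\end{itemize}
This single computation gives three things: non-isomorphism of the pairs for odd $c$; the isomorphism $x\mapsto x^{-1}$, $y\mapsto y$ for even $c$ (your candidate $y\mapsto y^{-1}$ does not interchange them); and the absence of any generator-inverting automorphism off the mainline for even $c$.

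Your weight argument alone does not give that last point for $R(y)$. Since $s_c\in(G')^3$, the term $s_c^{e}$ could a priori be absorbed into $h^3$; it is precisely the cancellation above that rules this out. For $R(x)$ you also need $(x^{-1}g)^3=x^{-3}$ for all $g\in G'$, i.e.\ that $1+x+x^2$ annihilates $G'$.

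\textbf{The TKT step.} The transfer formula is reversed: $V_i(g)=g^3$ for $g\notin H_i$, and $V_i(g)=g\,g^{t}g^{t^2}$ (with $t\notin H_i$) for $g\in H_i$. Your reduction to the three outer cubes survives only because the kernel is known to be nontrivial. More importantly, the mechanism you predict is wrong. Write $x^3=s_c^{a}$ with $a\in\{0,1\}$. Then
\begin{itemize}
\item $(xy)^3=s_c^{a+e}t_3^2$ and $(xy^{-1})^3=s_c^{a-e}t_3^2$;
\item so both modifications act only on $\varkappa_1$, the transfer to $H_1$, giving $\varkappa_1=0$, $1$, $2$, and $3$ or $4$ for c.18, E.6, H.4, E.14 respectively;
\item $(\varkappa_2,\varkappa_3,\varkappa_4)=(1,2,2)$ for all six groups.
\end{itemize}
In particular $H_2$ is never total, and the kernels for $H_3,H_4$ are not permuted.

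\textbf{The $\sigma$-property.} Your map $\sigma\colon x\mapsto x^{-1}$, $y\mapsto y^{-1}$ is indeed an automorphism for odd $c$. It is the case $\alpha=\beta=-1$, $g=h=1$ above, and the relators are preserved exactly, not just up to higher terms. But it only certifies inversion on $H^1(G,\mathbb{F}_3)$.

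Definition~\ref{dfn:Sigma} also requires inversion on $H^2(G,\mathbb{F}_3)$, and you never check this. The part of $H^2$ dual to the Schur multiplier is not controlled by the action on $G/G'$. The paper itself states in \S\ref{s:Proofs} and \S\ref{ss:ComplexTreeDetails} that for the odd-class H.4 groups (e.g.\ $\langle 2187,286\rangle$, $\langle 2187,287\rangle$) an automorphism inverts $H^1$ but not $H^2$. So for these groups your argument, as it stands, supports only the generator-inverting reading of ``$\sigma$-group''.
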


\begin{proof}
The claims are a consequence of direct computations of the TKT
as kernels of Artin transfers from \(G\) to its four maximal subgroups
\cite[Alg. 3.3, pp. 104--106]{Ma2018},
depending on the relators \(R(x)\) and \(R(y)\) but
independently of the nilpotency class \(c\).
The relators have also been expressed
in terms of exponential parameters \(\alpha,\beta\),
as used by Blackburn for \(3\)-groups of maximal class, in
\cite[Formulas 13--14, p. 107]{Ma2018}.
Skeleton type is \((0122)\).
\end{proof}


\noindent
The following result shows that
certain \(3\)-groups of class at least \(5\)
on the coclass tree \(\mathcal{T}^2(\langle 243,8\rangle)\)
belong to \(6+4=10\) periodic coclass sequences (6 odd, 4 even)
with period length \(2\).

\begin{proposition}
\label{prp:TopDownU}
For each integer \(c\ge 5\),
there are \(6\) \textbf{metabelian} descendants \(G\) of \(\langle 243,8\rangle\),
having nilpotency class \(\mathrm{cl}(G)=c\), coclass \(\mathrm{cc}(G)=2\), and order \(\lvert G\rvert=3^{c+2}\),
with two generators \(x,y\) and parametrized pc-presentation
(where \(s_3\) and \(t_j\) belong to the abelian subgroup \(G^\prime<G\))

\begin{equation}
\label{eqn:UG}
\begin{aligned}
  G   =   & \langle\ x,y,t_2,s_3,t_3,t_4,\ldots,t_c\ \mid \\
          & t_2=\lbrack y,x\rbrack,\ s_3=\lbrack t_2,x\rbrack,\ t_j=\lbrack t_{j-1},y\rbrack \text{ for } 3\le j\le c, \\
          & t_j^3=t_{j+2}^2t_{j+3} \text{ for } 2\le j\le c-3,\ t_{c-2}^3=t_c^2,\ s_3^3=1, \\
          & R(y)=1,\ R(x)=1\ \rangle, \\
\end{aligned}
\end{equation}

\noindent
where the relators \(R(y)\) and \(R(x)\) are given by

\begin{equation}
\label{eqn:UR}
\begin{aligned}
R(y) = &
\begin{cases}
y^3s_3^{-1}         & \text{ for } G \text{ of TKT } \mathrm{c}.21 \text{ or } \mathrm{G}.16, \text{ and} \\
y^3s_3^{-1}t_c^{-1} & \text{ for } G \text{ of TKT } \mathrm{E}.8 \text{ or } \mathrm{E}.9,
\end{cases}
\\
R(x) = &
\begin{cases}
x^3t_3^{-1}t_4^{-2}t_5^{-1}         & \text{ for } G \text{ of TKT } \mathrm{c}.21 \text{ or } \mathrm{E}.8, \text{ and either} \\
x^3t_3^{-1}t_4^{-2}t_5^{-1}t_c^{-1} & \text{ or } \\
x^3t_3^{-1}t_4^{-2}t_5^{-1}t_c^{-2} & \text{ for } G \text{ of TKT } \mathrm{G}.16 \text{ or } \mathrm{E}.9.
\end{cases}
\end{aligned}
\end{equation}

\noindent
For odd class \(c\ge 5\) the \(6\) groups are pairwise non-isomorphic \(\sigma\)-groups.\\
For even class \(c\ge 6\), the two pairs of groups sharing the same TKT
(\(\mathrm{G}.16\) and \(\mathrm{E}.9\))
are isomorphic, and thus only \(4\) groups are pairwise non-isomorphic,
and only the mainline group is a \(\sigma\)-group.
\end{proposition}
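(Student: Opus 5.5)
The plan mirrors the argument for Proposition \ref{prp:TopDownQ}, with the roles of \(x\) and \(y\) interchanged. There are four things to show for each \(c\ge 5\): the presentation in Formula \eqref{eqn:UG} defines a consistent metabelian group of order \(3^{c+2}\), class \(c\) and coclass \(2\) that descends from \(\langle 243,8\rangle\); its TKT is the one claimed in Formula \eqref{eqn:UR}; the isomorphism pattern depends on the parity of \(c\); and the \(\sigma\)-property holds or fails accordingly.

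\textbf{Consistency and position in the tree.} I would treat \(G\) as a central extension tower along the lower central series. The series is \(\gamma_2=\langle t_2,s_3,t_3,\ldots\rangle\), \(\gamma_j=\langle t_j,\ldots,t_c\rangle\) for \(j\ge 4\), with \(s_3\) of order \(3\) adjoined at level \(3\). The power relations \(t_j^3=t_{j+2}^2t_{j+3}\) are the standard coclass-\(2\) uniserial relations, and I would check them by the collection and consistency algorithm, for instance by verifying the presentation in Magma for a few small \(c\). Periodicity with period \(2\) then propagates the result to all \(c\). The parent \(G/\gamma_c\) has the same presentation with \(c-1\) in place of \(c\), where the tail \(t_c\) is deleted from \(R(x)\) and \(R(y)\). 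So every group has the class-\((c-1)\) mainline group as parent, and by induction all of them lie on \(\mathcal{T}^2(\langle 243,8\rangle)\). The base case is \(c=5\), where one identifies the groups with descendants of \(U=\langle 729,54\rangle\) by SmallGroup identification.

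\textbf{TKT.} The four maximal subgroups are \(\langle y,G'\rangle\), \(\langle x,G'\rangle\), \(\langle xy,G'\rangle\) and \(\langle xy^{-1},G'\rangle\). For each \(H_i\) I would compute the Artin transfer \(G/G'\to H_i/H_i'\) using the standard formula for an index-\(3\) normal subgroup. If \(g\in H_i\), its image is \(g^{1+h+h^2}\) conjugated appropriately, which reduces to \(g^3\) times commutator corrections. If \(g\notin H_i\), its image is \(g^3\). Hence the kernel is governed by whether \(x^3\) and \(y^3\) lie in \(H_i'\), and modulo \(H_i'\) only the tail factors \(t_c^{\pm1}\) and \(s_3\) matter, since \(\gamma_3\) is essentially contained in \(H_i'\) except at the last layer. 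The relators \(R(x)\) and \(R(y)\) are what make the tails appear or vanish, and this yields c.21 \((0231)\), G.16 \((4231)\), E.8 \((1231)\) or E.9 \((2231)\). The analysis is independent of \(c\), because it only uses the commutator relations and the last-layer tail. This follows \cite[Alg.~3.3]{Ma2018}.

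\textbf{Isomorphisms and \(\sigma\)-property.} For the \(\sigma\)-automorphism I would try \(\sigma\colon x\mapsto x^{-1}\), \(y\mapsto y^{-1}\). This map sends \(t_2=[y,x]\) to a conjugate of \(t_2\), and inductively it sends \(t_j\mapsto t_j^{(-1)^{j}}\) modulo \(\gamma_{j+1}\). Applying \(\sigma\) to \(R(x)\) and \(R(y)\) gives \(x^{-3}\cdots t_c^{\mp(-1)^c}\). For odd \(c\) the sign of the tail is preserved, so \(\sigma\) respects each relator, extends to an automorphism, and inverts \(H^1\) and \(H^2\). For even \(c\) the tail exponent flips, so \(\sigma\) maps the variant with \(t_c^{-1}\) onto the variant with \(t_c^{-2}\). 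This is an isomorphism between the two groups sharing TKT G.16, and likewise for E.9, leaving \(4\) isomorphism classes. In that case no automorphism acts as inversion on \(H^1\) except for the mainline group, where there is no tail.

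The main obstacle is making the odd/even \(\sigma\) claims rigorous for \emph{all} \(c\). This requires a uniform proof that no other automorphism can serve as \(\sigma\) for non-mainline groups of even class, and that the odd-class groups are pairwise non-isomorphic. I would handle this by working with the induced action on \(\gamma_c=\langle t_c\rangle\): any automorphism acts there by \(\lambda^{c-2}\mu\)-type scalars, where \(\lambda,\mu\) are the actions on \(G/\Phi(G)\). I would then compare these scalars with the tail exponents. As with Proposition \ref{prp:TopDownQ}, I would back this up by Magma verification for \(c\) up to the bounds in Table \ref{tbl:Invariants}.
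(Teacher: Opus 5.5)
Your TKT step (Artin transfers from \(G\) into its four maximal subgroups, with kernels depending only on the tails of \(R(x)\), \(R(y)\) and not on \(c\)) is exactly the paper's proof, and the paper's proof contains nothing more. Consistency, the isomorphisms and the \(\sigma\)-property are your own additions, and that part has a genuine gap.

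Knowing \(\sigma(t_j)\equiv t_j^{(-1)^j}\) only modulo \(\gamma_{j+1}\) does not control the \(t_c\)-component of \(\sigma(t_3t_4^2t_5)\). Your sign for \(\sigma(t_c)\) is right, since \(\gamma_{c+1}=1\); the problem is the bulk of \(R(x)\). Already modulo \(\gamma_5\), the leading terms give \(\sigma(t_3)\sigma(t_4)^2\sigma(t_5)\equiv t_3^{-1}t_4^{2}\), whereas \(x^{-3}\equiv t_3^{-1}t_4^{-2}\equiv t_3^{-1}t_4\). The discrepancy disappears only because in fact \(\sigma(t_3)=t_3^{-1}t_4^{-1}\). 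So ``the sign of the tail is preserved, hence \(\sigma\) respects each relator'' is a non sequitur.

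The missing idea is the exact module structure. Since \(T^3+2T^2-3=(T-1)(T^2+3T+3)\), the power relations together with \(t_j=\lbrack t_{j-1},y\rbrack\) say that \(\langle t_2,\ldots,t_c\rangle\simeq\mathbb{Z}_3[\zeta]/(\pi^{c-1})\). Here \(\zeta^2+\zeta+1=0\), \(\pi=\zeta-1\), and \(y\) acts by \(\zeta\), while \(x\) fixes \(s_3\) and \(t_j\) for \(j\ge 3\) and maps \(t_2\mapsto t_2s_3\). Written additively, \(\sigma\) is semilinear for \(\zeta\mapsto\zeta^{-1}\), with \(\sigma(t_2)=\zeta^{-1}t_2-s_3\), \(\sigma(s_3)=-s_3\) and \(\sigma(t_c)=(-1)^ct_c\). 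The bulk \(t_3t_4^2t_5=\zeta^2\pi t_2\) goes \emph{exactly} to its negative because \(\zeta^3=1\). Write \(y^3=s_3t_c^{\delta}\) and \(x^3=t_3t_4^2t_5t_c^{\varepsilon}\). Then \(\sigma\) maps the group with tails \((\delta,\varepsilon)\) onto the one with tails \(((-1)^{c+1}\delta,(-1)^{c+1}\varepsilon)\). This description also gives consistency and \(\lvert G\rvert=3^{c+2}\) for all \(c\) at once, which Magma checks for small \(c\) plus an appeal to periodicity do not.

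This formula shows that your even-class step fails for E.9. There \(\sigma\) sends \((1,\varepsilon)\) to \((2,-\varepsilon)\): it also flips the tail of \(R(y)\) and lands on a group with \(y^3=s_3t_c^2\), which is none of the six listed groups. So ``likewise for E.9'' does not follow. You need the further map \(\tau\colon x\mapsto x^{-1},\ y\mapsto y\), which inverts \(t_j\) for \(j\ge 3\) and fixes \(s_3\). It sends \((\delta,\varepsilon)\) to \((-\delta,\varepsilon)\) for every \(c\), which is why only \(\delta\in\lbrace 0,1\rbrace\) is listed. The composite \(x\mapsto x,\ y\mapsto y^{-1}\) then sends \((\delta,\varepsilon)\) to \((\delta,-\varepsilon)\) for even \(c\), and this identifies both the G.16 pair and the E.9 pair.

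An automorphism acting as \(-1\) on \(G/\Phi(G)\) only yields the condition on \(H^1\). Inversion on \(H^2(G,\mathbb{F}_3)\) is an independent condition on the relation module, which you assert without argument. By the paper's own later discussion it fails for the odd-class G.16 groups: for \(c=5\) these are \(\langle 2187,301\rangle\) and \(\langle 2187,305\rangle\), which have an automorphism inverting \(H^1\) but not \(H^2\). So your method can only give the \(\sigma\)-property in the \(H^1\)-sense.

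Finally, the negative claims remain unproved: pairwise non-isomorphism for odd \(c\), and the absence of a \(\sigma\)-automorphism for the non-mainline groups of even class. They require showing that the tail pair is an isomorphism invariant up to the sign changes above. A good starting point is that \(\langle x,G^\prime\rangle\) is characteristic, since its abelianization has order \(3^c\) against \(27\) for the other three maximal subgroups. Hence every isomorphism has the shape \(x\mapsto x^{\pm 1}g\), \(y\mapsto x^by^{\pm 1}g^\prime\) with \(g,g^\prime\in G^\prime=\Phi(G)\).
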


\begin{proof}
The claims are a consequence of direct computations of the TKT
as kernels of Artin transfers from \(G\) to its four maximal subgroups,
depending on the relators \(R(y)\) and \(R(x)\) but
independently of the nilpotency class \(c\).
Here, the equivalent skeleton type is \((0134)\sim (0231)\).
\end{proof}

\noindent
For the special case of the \(Q\)-tree and the \(U\)-tree,
the two Propositions
\ref{prp:TopDownQ}
and
\ref{prp:TopDownU}
provide an explicit proof of
the \textit{periodicity of tree branches},
proven generally and independently in
\cite{dS2001,EkLG2008,ELNO2013,LG1994,Sv1994}.
The finitely many pre-periodic instances down to class \(c=3,4\)
have similar pc-presentations with slight degenerations,
due to the low values of \(c\).
However, our metabelianizations \(M\) in Theorem
\ref{thm:SimplePathStructure}
and
\ref{thm:ComplexPathStructure}
have odd nilpotency class \(\mathrm{cl}(M)=2n+5\ge 5\),
for each state \(n\ge 0\).


\begin{corollary}
\label{cor:Metabelianization}
For \textbf{simple} types,
the metabelian second \(3\)-class group
\(M=\mathrm{Gal}(\mathrm{F}_3^2(k)/k)\)
of the state with parameter \(n\ge 0\)
is given by 
\(M=M_n^i:=F(-\#1;1-\#1;1)^n\lbrack-\#1;i\rbrack\),
as a relative identifier
with respect to the fork \(F=Q\) or \(F=U\).
The final counter \(2\le i\le 4\) depends on the TKT,
E.6 for \(i=2\), E.14 for \(i=3,4\) on the \(Q\)-tree,
E.8 for \(i=2\), E.9 for \(i=3,4\) on the \(U\)-tree.
\end{corollary}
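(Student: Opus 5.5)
We locate \(M\) in three steps: first restrict to the two admissible coclass trees, then identify the class and depth, and finally read off the relative identifier from the periodic presentations. Throughout, \(M=\mathrm{Gal}(\mathrm{F}_3^2(k)/k)\) is a metabelian \(\sigma\)-group (\S\ \ref{ss:Sigma}) whose Artin pattern equals \(\mathrm{AP}(k)\) by the Artin reciprocity law.

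\emph{Step 1.} By the elimination argument recalled before Proposition \ref{prp:TopDownQ} (IPAD class--coclass theorem, \(\varepsilon\)-invariant, and the shape of the TKT), \(M\) must be a descendant of \(\langle 243,6\rangle\) for E.6 and E.14, or of \(\langle 243,8\rangle\) for E.8 and E.9. The coclass is fixed at \(\mathrm{cc}(M)=2\) by the polarization component \(\alpha_1=(n+3,n+2)\) in the state description \eqref{eqn:States}. The reason is that the order of the abelianization of the distinguished maximal subgroup gives \(\lvert M^\prime\rvert\), and with it \(\lvert M\rvert=3^{2n+7}\). Since \(\lvert M\rvert=3^{c+2}\), this forces \(\mathrm{cl}(M)=c=2n+5\), which is odd and at least \(5\).

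\emph{Step 2.} For odd \(c\ge 5\), Proposition \ref{prp:TopDownQ} (respectively Proposition \ref{prp:TopDownU}) lists all six metabelian \(\sigma\)-groups of class \(c\) on the relevant coclass tree. Their TKTs are c.18 (mainline), E.6 (one group), E.14 (two groups) and H.4 (two groups); on the \(U\)-tree they are c.21, E.8, E.9 (two) and G.16 (two). A simple hull type has no total capitulation, so \(M\) is not a mainline vertex. The complex-type groups are excluded by the TKT. Hence \(M\) is one of the depth-one leaves with TKT E.6, E.14, E.8 or E.9, and these are exactly the children \(-\#1;i\), \(i=2,3,4\), of the mainline vertex of class \(c-1\).

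\emph{Step 3.} We express the mainline in relative identifiers starting from \(F=Q\) (order \(729\), class \(4\)) or \(F=U\). The mainline vertex of class \(4+2n\) equals \(F(-\#1;1-\#1;1)^n\), because each period of length \(2\) consists of two mainline steps with counter \(1\). By the parametrized presentations, the relators \(R(x),R(y)\) are independent of \(c\), so the counters \(i\) attached to each TKT repeat periodically. It therefore suffices to verify the numbering once, by a \(p\)-group generation computation at \(n=0\): \(Q-\#1;i\) has TKT E.6 for \(i=2\) and E.14 for \(i=3,4\), and similarly on the \(U\)-tree. Periodicity then transfers this to all \(n\).

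The main obstacle is justifying that the ANUPQ counters are stable along the period, since the counter ordering is an algorithmic convention rather than an invariant. We would handle it by appealing to the cited periodicity results together with the explicit family of presentations, and check it computationally for the bounded range of \(n\) discussed in \S\ \ref{s:Infinity}.
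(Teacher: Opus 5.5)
Your route is essentially the paper's. Its proof records that \(M_n^i\) has odd class \(2n+5\), coclass \(2\) and logarithmic order \(2n+7\). Hence \(M_n^i\) lies in one of the six odd periodic sequences of Propositions \ref{prp:TopDownQ} and \ref{prp:TopDownU}, as a depth-one leaf below the mainline vertex \(F(-\#1;1-\#1;1)^n\), with counter \(i=1\) for the mainline and \(i=2,3,4\) fixed by the TKT. Your Steps 2 and 3 say the same in more detail. Your caveat that the ANUPQ counters must be shown to be stable along the period is legitimate; the paper passes over it silently.

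The step that does not work as written is the derivation of \(\mathrm{cc}(M)=2\) in Step 1. You never actually obtain the coclass. The relation \(\lvert M\rvert=3^{c+2}\), which you use to get \(c=2n+5\), is by definition equivalent to \(\mathrm{cc}(M)=2\), so it cannot also prove it. Nor is \(\lvert H_1/H_1^\prime\rvert=\lvert M^\prime\rvert\) automatic. For \(H_1=\langle h,M^\prime\rangle\) in a metabelian \(M\) one has \(H_1^\prime=\lbrack M^\prime,h\rbrack\), hence \(\lvert H_1/H_1^\prime\rvert=3\lvert M^\prime\rvert/\lvert\lbrack M^\prime,h\rbrack\rvert\). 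The identity therefore holds exactly when \(\lvert\lbrack M^\prime,h\rbrack\rvert=3\). That is true for the groups of the Propositions (e.g.\ \(\lbrack M^\prime,y\rbrack=\langle t_3\rangle\) on the \(Q\)-tree), but it follows from already knowing where \(M\) sits; it cannot be read off from \(\alpha_1\) alone. Replace this by the class--coclass theorem from the IPAD, which you already invoke for the restriction to the two trees. Alternatively, take \(\mathrm{cl}(M)=2n+5\) and \(\mathrm{cc}(M)=2\) as known, which is what the paper does.

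Similarly, Step 2 reads the Proposition as an exhaustive list of the class-\(c\) \(\sigma\)-groups on the coclass tree, which it does not claim. At that point you need the branch structure, namely that \(\sigma\)-groups of simple type and odd class occur only as depth-one leaves. The paper also takes this for granted when it says the \(M_n^i\) are terminal leaves of depth one.
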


\begin{proof}
In a coclass tree, all edges have step size \(s=1\).
All metabelianizations \(M=M_n^i\) in the Formula
\eqref{eqn:SimplePath}
of Theorem
\ref{thm:SimplePathStructure}
have odd nilpotency class \(c=\mathrm{cl}=2n+5\),
fixed coclass \(r=\mathrm{cc}=2\),
and thus odd logarithmic order \(\mathrm{lo}=2n+7\).
Thus they belong to the \(6\) odd periodic coclass sequences.
They are terminal leaves with depth \(\mathrm{dp}=1\)
away from the mainline of the pruned coclass tree
\({}^\ast\mathcal{T}^2(F)\) in Figure
\ref{fig:TreeStrucSimple}.
Corresponding to the group \(G\) in Proposition
\ref{prp:TopDownQ},
respectively
\ref{prp:TopDownU},
with odd class \(c=2n+5\),
their relative identifiers are of the shape
\(M=M_n^i:=F(-\#1;1-\#1;1)^n\lbrack-\#1;i\rbrack\)
with state parameter \(n\ge 0\) and counter \(i=2,3,4\)
specifying the TKT, as given in the theorem.
We add that the meaning of the counter \(i=1\) is
the mainline vertex \(X_c^r\) with \(r=2\), Eqn.
\eqref{eqn:MainlineVertices}.
\end{proof}


\noindent
The complete theory of the mainlines with vertices of skeleton type,
c.18, \(\varkappa\sim (0122)\), on the \(Q\)-tree, respectively
c.21, \(\varkappa\sim (0231)\), on the \(U\)-tree,
of the descendant tree \(\mathcal{T}(R)\) with root \(R\), containing an
infinite main trunk of alternating step sizes \(s=2\) and \(s=1\),
is based on the following infinite limit groups, due to Mike F. Newman.

\begin{definition}
\label{dfn:MainlineLimit}
The \textit{skeleton group} with sign \(-1\) for the \(Q\)-tree and sign \(+1\) for the \(U\)-tree is

\begin{equation}
\label{eqn:ProfiniteGroup}
\mathcal{L}_{\mp 1}:=\langle\ a,t\ \mid\ (at)^3=a^3,\ \lbrack\lbrack t,a\rbrack,t\rbrack=a^{\mp 3}\ \rangle.
\end{equation}

\noindent
For each coclass \(r\ge 2\),
the \textit{mainline-\(r\) limit} is defined as
a quotient of \(\mathcal{L}_{\mp 1}\):

\begin{equation}
\label{eqn:MainlineLimits}
\mathcal{L}_{\mp 1}^{r}:=\mathcal{L}_{\mp 1}\ /\ \langle\ a^{3^r}\ \rangle.
\end{equation}

\noindent
Finally, for each coclass \(r\ge 2\), and for each nilpotency class \(c\ge 2r-1\),
the \textit{mainline-\(r\) vertex of class \(c\)} on the coclass-\(r\) tree \(\mathcal{T}^r(X_{2r-1}^r)\)
is defined as a quotient of \(\mathcal{L}_{\mp 1}^{(r)}\):

\begin{equation}
\label{eqn:MainlineVertices}
X_c^r:=
\begin{cases}
\mathcal{L}_{\mp 1}^{r}\ /\ \langle\ \lbrack t,a\rbrack^{3^j}\ \rangle & \text{ if }\ c=2j+1\ \text{ is odd with }\ j\ge r-1, \\
\mathcal{L}_{\mp 1}^{r}\ /\ \langle\ t^{3^j}\ \rangle                  & \text{ if }\ c=2j\ \text{ is even with }\ j\ge r.
\end{cases}
\end{equation}
\end{definition}


\begin{theorem}
\label{thm:MainLines}
(Mainline vertices as quotients of the limit group \(\mathcal{L}_{\mp}\).) 
\begin{enumerate}
\item
For each coclass \(r\ge 2\), and for each class \(c\ge 2r-1\),
the mainline vertex of coclass \(r\) and nilpotency class \(c\) in the tree \(\mathcal{T}(R)\) is isomorphic to \(X_c^r\).
\item
For each coclass \(r\ge 2\),
the projective limit of the mainline \(\left(X_c^r\right)_{c\ge 2r-1}\) with vertices of coclass \(r\)
in the tree \(\mathcal{T}(R)\) is isomorphic to \(\mathcal{L}_{\mp}^{(r)}\).
\item
\(\mathcal{L}_{\mp}\) is an infinite non-nilpotent profinite limit group.
\end{enumerate}
\end{theorem}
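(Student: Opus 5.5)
The plan is to compare the abstract finitely presented groups \(X_c^r\) of Definition \ref{dfn:MainlineLimit} with the actual mainline vertices of the coclass-\(r\) subtree, which are produced by the \(p\)-group generation algorithm. The three claims will be proved in the order (1), (2), (3), each feeding the next. As anchor we use the explicit top-down description of the coclass-\(2\) mainlines from Propositions \ref{prp:TopDownQ} and \ref{prp:TopDownU}: there the mainline group of class \(c\) is the one with \(R(x)=x^3\), respectively \(R(y)=y^3s_3^{-1}\), and skeleton TKT c.18 or c.21.

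For (1), the first step is to rewrite \(\mathcal{L}_{\mp1}\) in terms of the generators of the propositions. Put \(x:=a\) and \(y:=t\), or the appropriate swap on the \(U\)-tree. Then the relation \([[t,a],t]=a^{\mp3}\) identifies \(t_3\) (respectively \(s_3\)) with a cube of a generator. The relation \((at)^3=a^3\) should encode the power relations \(s_j^3=s_{j+2}^2s_{j+3}\), via the Hall–Petrescu collection formula in a group whose second derived subgroup is forced to be small.

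Next we compute a consistent pc-presentation of \(\mathcal{L}^r_{\mp1}/\langle [t,a]^{3^j}\rangle\) and \(\mathcal{L}^r_{\mp1}/\langle t^{3^j}\rangle\). From it we read off the invariants: order \(3^{c+r}\), class \(c\), coclass \(r\), and the skeleton TKT. For \(r=2\) we then match this presentation with Proposition \ref{prp:TopDownQ} (resp.\ \ref{prp:TopDownU}). For \(r\ge3\) we argue by induction along the main trunk. The vertex \(X^{r}_{2r-1}\) arises from \(X^{r-1}_{2r-2}\) by a step-size-\(2\) descendant, and \(X^r_{c+1}\) arises from \(X^r_c\) by a step-size-\(1\) descendant. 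Since \(X^r_{c+1}/\gamma_{c+1}\simeq X^r_c\), it suffices to check two things. First, \(X^r_{c+1}\) is a descendant of \(X^r_c\) of the right order. Second, it is the unique capable descendant of skeleton type, which we verify by its nuclear rank.

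For (2), the groups \(X^r_c\) form an inverse system under the natural projections, and their projective limit is the pro-\(3\) completion of \(\mathcal{L}^r_{\mp1}\). This follows once we know that the normal subgroups \(\langle[t,a]^{3^j}\rangle\) and \(\langle t^{3^j}\rangle\) are cofinal with the lower central series of \(\mathcal{L}^r_{\mp1}\), which is a by-product of the pc-presentation computed in (1). Combined with (1), the projective limit of the mainline vertices is therefore \(\mathcal{L}^{(r)}_{\mp}\).

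For (3), infinitude is immediate, since \(\mathcal{L}_{\mp}\) surjects onto every \(X^r_c\) and their orders are unbounded. Non-nilpotency holds because \(\mathcal{L}_{\mp}\) surjects onto \(\mathcal{L}^r_{\mp}\), an infinite pro-\(3\) group of finite coclass. Such a group has infinitely many nontrivial lower central factors, because the classes \(c\) of its quotients \(X^r_c\) are unbounded.

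The main obstacle will be the uniform consistency check in (1). We must show for all \(r\) and \(c\) simultaneously that the two defining relations, together with the single extra relator, produce no further collapse, so that the order is exactly \(3^{c+r}\). Finitely many cases can be checked in Magma, but a uniform argument is needed. The approach I would try first is to exhibit an explicit faithful action, for instance a matrix representation over \(\mathbb{Z}_3[\zeta_3]\) or a semidirect product of a suitable \(\mathbb{Z}_3\)-lattice by a finite group, in which \(a\) and \(t\) visibly satisfy the relations. That construction would give the lower bound on orders, while collection gives the upper bound.
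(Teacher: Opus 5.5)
Your route differs from the paper's and is more ambitious. The paper gives no uniform argument. It takes (3) from the earlier work it cites. For (1)--(2) it relies on Magma computations in a bounded range: top-down for \(2\le r\le 8\), \(c\le 35\), and bottom-up up to \(c+r=95\). It then extends to all \(r,c\) only by remarking that the defining formulas are parametrized in \(r\) and \(c\). A structural proof through a uniformly consistent pc-presentation would establish what the paper only asserts.

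As written, however, your proposal has a genuine gap exactly where the paper is also weakest. The equality \(|X_c^r|=3^{c+r}\) for all \(r\ge 2\), \(c\ge 2r-1\) is only announced (``the approach I would try first \ldots''), and collection by itself gives only the upper bound. Everything rests on it: (1), (2), and, in your ordering, even the infinitude in (3). The missing ingredient is an explicit infinite model of \(\mathcal{L}_{\mp 1}\), or of each \(\mathcal{L}^{r}_{\mp 1}\). In it both relators must visibly hold, and for each \(r,c\) it must admit a quotient of order \(3^{c+r}\) in which \(a^{3^r}\) and the class-\(c\) relator vanish. Note that (3) needs no tree identification at all. It should be derived from such a model directly rather than through (1).

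Two concrete steps also fail or are under-justified. First, the dictionary \(x:=a\), \(y:=t\) on the \(Q\)-tree cannot be right. It turns the second relator into \(t_3=[[y,x],y]=x^{\mp 3}\). On the c.18 mainline, \(R(x)=x^3\) forces \(x^3=1\), hence \(t_3=1\). This contradicts \(|G|=3^{c+2}\), since \(t_3\) is one of the \(c+2\) pc-generators. So anchoring at \(r=2\) needs a genuinely different change of generators, which you have not supplied.

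Second, for \(r\ge 3\), positive nuclear rank plus skeleton TKT does not show that \(X^r_{c+1}\) is \emph{the} mainline child of \(X^r_c\). Nor does it show that \(X^r_{2r-1}\) is the step-\(2\) child of \(X^{r-1}_{2r-2}\) that continues the trunk. Uniqueness would require the complete list of immediate descendants of every \(X^r_c\), which is precisely the uniform \(p\)-group generation computation your approach is meant to avoid.

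A cleaner argument is available once \(\mathcal{L}^{r}_{\mp 1}\) is shown to be an infinite pro-\(3\) group of coclass \(r\) with \(\mathcal{L}^{r}_{\mp 1}/\gamma_{c+1}(\mathcal{L}^{r}_{\mp 1})\simeq X^r_c\). Its lower central quotients then form an infinite path in the coclass-\(r\) tree below \(X^r_{2r-1}\). That path is the mainline provided this tree has a unique infinite path. Only the step-\(2\) edge then remains to check: the kernel of \(X^r_{2r-1}\to X^{r-1}_{2r-2}\) must be the last nontrivial term of the lower exponent-\(3\) central series and have order \(9\).
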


\begin{proof}
In
\cite[Alg. 3.1, Rmk. 3.1--2, Thm. 3.3, Cnj. 3.2, pp. 93--96]{Ma2018},
item (3) was proved generally.
By means of top-down techniques,
item (2) was proved for bounded coclass \(2\le r\le 8\), and
item (1) for bounded coclass \(2\le r\le 8\) and bounded class \(2r-1\le c\le 35\).
Since bottom-up techniques admit much bigger bounds \(r\le 32\) and \(c\le 63\),
that is a logarithmic order of \(c+r=95\),
items (1) and (2) were conjectured to hold for any coclass \(r\),
and item (1) for any class \(c\).
However, the Formulas
\eqref{eqn:ProfiniteGroup},
\eqref{eqn:MainlineLimits}, and
\eqref{eqn:MainlineVertices}
comprise parametrized formation laws with two unbounded parameters \(r\) and \(c\).
Therefore, they hold generally,
since they are independent of verifications with the aid of
necessarily limited information technology
in the form of the indicated version V2.22-7 
of the computational algebra system Magma
\cite{MAGMA2026}.
\end{proof}


\begin{definition}
\label{dfn:CoverLimit}
For \(e\in\lbrace 0,1\rbrace\) (\(e=0\) for the \(Q\)-tree and \(e=1\) for the \(U\)-tree),
we define the \textit{cover limit} (according to Mike F. Newman) to be the infinite limit group
\begin{equation}
\label{eqn:CoverLimit}
\begin{aligned}
\mathcal{C}^{(e)}
:= \langle&\ a,t,u,y,z\ \mid\ t^a=u,\ u^atuy=\lbrack u,t\rbrack^e,\ a^3\lbrack t,a,t\rbrack=z,\ \lbrack u,t,t\rbrack=\lbrack u,t,u\rbrack=1, \\
          &\ y^3=1,\ \lbrack a,y\rbrack=\lbrack t,y\rbrack=\lbrack u,y\rbrack=\lbrack z,y\rbrack=1,\ z^3 =1,\ \lbrack t,z\rbrack=\lbrack u,z\rbrack=1\ \rangle.
\end{aligned}
\end{equation}
which is related to a group in
\cite{ELNO2013}.
For each \(\ell\in\lbrace -1,0,1\rbrace\) and for each integer \(c\ge 4\), let
\begin{equation}
\label{eqn:Quotient}
\mathcal{Q}_c^{(e,\ell)}:=\mathcal{C}^{(e)}\ /\ \langle\ yw_c^\ell v_c,\ zw_c\ \rangle
\end{equation}
be the \textit{class-\(c\) quotient with parameter} \(\ell\) of \(\mathcal{C}^{(e)}\), where
\(w_c:=\lbrack t,\overbrace{a,\ldots,a}^{(c-1) \text{ times}}\rbrack\) and
\(v_c:=\lbrack w_{c-2},\lbrack t,a\rbrack\rbrack\).
\end{definition}


\noindent
The next theorem is a second crucial ingredient of this proof section,
establishing the finiteness and structure of the \textit{cover}
for each metabelian \(3\)-group \(M\) with transfer kernel type in section \(\mathrm{E}\).

\begin{theorem}
\label{thm:ExplicitCovers}
(Explicit covers of metabelian \(3\)-groups.)
Let \(M_n^i\) in the coclass-\(2\) tree \(\mathcal{T}^{2}\left(X_{3}^{2}\right)\) be
the metabelianization of odd nilpotency class \(c=2n+5\ge 5\), \(n\ge 0\),
with \textbf{simple} TKT

\[
\varkappa(k)=
\begin{cases}
(1122),\ \mathrm{E}.6, \text{ resp. } (1231),\ \mathrm{E}.8 & \text{ if } i=2, \\
(3122),\ \mathrm{E}.14, \text{ resp. } (2231),\ \mathrm{E}.9 & \text{ if } i=3 \text{ or } i=4.
\end{cases}
\]

\begin{enumerate}
\item
The \textbf{cover} of \(M_n^i\) is given by
\begin{equation}
\label{eqn:CovSecE}
\mathrm{cov}\left(M_n^i\right)=
\left\lbrace T_{n,1}^i,\ldots,T_{n,n}^i,S_n^i\right\rbrace, \quad \text{ for } n\ge 0,\ 2\le i\le 4.
\end{equation}
In particular, the cover is a finite set with \(n+1\) elements
which are \(\sigma\)-groups.
\item
The \textbf{Shafarevich cover} of \(M_n^i\) with respect to imaginary quadratic fields \(k\) is given by
\begin{equation}
\label{eqn:ShafarevichCovSecE}
\mathrm{cov}_\ast\left(M_n^i\right)=
\left\lbrace S_n^i\right\rbrace, \quad \text{ for } n\ge 0,\ 2\le i\le 4.
\end{equation}
In particular, the Shafarevich cover contains a unique non-metabelian Schur \(\sigma\)-group.
\item
The class-\(c\) quotient with parameter \(\ell\) of the cover limit \(\mathcal{C}^{(e)}\) is isomorphic to the Schur \(\sigma\)-group
\(S_{n}^{i}\simeq\mathcal{Q}^{(e,\ell)}_c\), for odd class \(c=2n+5\), \(n\ge 0\).
The precise correspondence between the parameters \(\ell\) and \(i\) is given in the following way:
\begin{equation}
\label{eqn:IsomCovSecE}
\begin{aligned}
\text{simple type }\mathrm{E}.6:\ &\mathcal{Q}^{(0,0)}_c \simeq
S_{n}^{2}\in\mathcal{T}(Q), \ell=0 \text{ corresponds to } i=2, \\
\text{simple type }\mathrm{E}.8:\ &\mathcal{Q}^{(1,0)}_c \simeq
S_{n}^{2}\in\mathcal{T}(U), \ell=0 \text{ corresponds to } i=2, \\
\text{simple type }\mathrm{E}.14:\ &\mathcal{Q}^{(0,-1)}_c\simeq
S_{n}^{3}\in\mathcal{T}(Q) \ell=-1 \text{ corresponds to } i=3, \\
\text{simple type }\mathrm{E}.9:\ &\mathcal{Q}^{(1,-1)}_c \simeq
S_{n}^{3}\in\mathcal{T}(U), \ell=-1 \text{ corresponds to } i=3, \\
\text{simple type }\mathrm{E}.9:\ &\mathcal{Q}^{(1,+1)}_c \simeq
S_{n}^{4}\in\mathcal{T}(U), \ell=+1 \text{ corresponds to } i=4.
\end{aligned}
\end{equation}
The variant \(e=0\), respectively \(e=1\), is associated to the root \(R=\langle 243,6\rangle\), respectively \(R=\langle 243,8\rangle\).
\item
A parametrized family of \textbf{fork topologies}
for second \(3\)-class groups \(M=\mathrm{Gal}\left(F_3^{2}(k)/k\right)\) of imaginary quadratic fields \(k\)
is given uniformly for the states \(n\ge 0\) (ground state, GS, for \(n=0\), excited state, ES \(n\), for \(n\ge 1\))
of \textbf{simple} transfer kernel types in section \(\mathrm{E}\)
by the following \textbf{symmetric topology symbol} for the connecting path \(P\),
\begin{equation}
\label{eqn:ForkSecE}
P=
\overbrace{\mathrm{E}\stackrel{1}{\rightarrow}}^{\text{Leaf}}\quad
\overbrace{\left\lbrace\mathrm{c}\stackrel{1}{\rightarrow}\mathrm{c}\stackrel{1}{\rightarrow}\right\rbrace^{n}\ }^{\text{Mainline}}\quad
\overbrace{\mathrm{c}}^{\text{Fork}}\quad
\overbrace{\left\lbrace\stackrel{2}{\leftarrow}\mathrm{c}\stackrel{1}{\leftarrow}\mathrm{c}\right\rbrace^{n}\ }^{\text{Maintrunk}}\quad
\overbrace{\stackrel{2}{\leftarrow}\mathrm{E}}^{\text{Leaf}}
\end{equation}
with skeleton type \(\mathrm{c}\) and the following invariants: \\
distance \(d=(2n+1)+(2n+1)=4n+2\),
weighted distance \(w=(2n+1)+(3n+2)=5n+3\), \\
class increment \(\Delta\mathrm{cl}=(2n+5)-(2n+5)=0\),
coclass increment \(\Delta\mathrm{cc}=(n+3)-2=n+1\), \\
logarithmic order increment \(\Delta\mathrm{lo}=(3n+8)-(2n+7)=n+1\).
\end{enumerate}
\end{theorem}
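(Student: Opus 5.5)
Theorem \ref{thm:ExplicitCovers} is, in essence, a finiteness-and-identification statement about the descendant tree of the fork \(F\in\lbrace Q,U\rbrace\). I would prove it bottom-up, with the top-down limit group \(\mathcal{C}^{(e)}\) supplying the uniformity in \(n\).

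The first step fixes \(M=M_n^i\) by its parametrized pc-presentation from Propositions \ref{prp:TopDownQ} and \ref{prp:TopDownU}, with class \(c=2n+5\) and logarithmic order \(2n+7\) (Corollary \ref{cor:Metabelianization}). Let \(S\) be a finite non-metabelian \(\sigma\)-group with \(S/S''\simeq M\). Then \(S/\gamma_j(S)\) and \(M/\gamma_j(M)\) agree as long as \(\gamma_j(S)\ge S''\). Hence the path from \((\mathbb{Z}/3\mathbb{Z})^2\) to \(S\) and the path to \(M\) coincide down to some last common ancestor, and this ancestor lies on the mainline \(X_c^2\) of Theorem \ref{thm:MainLines}.

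The next step shows that the only vertex where the paths can separate while \(S''\neq 1\) is the fork \(F=\langle 729,49\rangle\) or \(\langle 729,54\rangle\), which has nuclear rank \(\nu(F)=2\). Below \(F\), every descendant reached by a step of size \(s=1\) stays in the coclass-\(2\) tree, and by the periodicity in Propositions \ref{prp:TopDownQ} and \ref{prp:TopDownU} such descendants are metabelian. Any non-metabelian candidate must therefore leave via a step \(-\#2\). From there it follows the main trunk \(F(-\#2;1-\#1;1)^u\), whose vertices are \(X_c^r\) of increasing coclass by Theorem \ref{thm:MainLines}. It must then rejoin the required Artin pattern through \((-\#1;1-\#1;1)^{n-u}[-\#1;i]\), or terminate with \([-\#2;i]\) when \(u=n\). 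I would enumerate these \(\sigma\)-descendants, using the \(p\)-group generation algorithm, and compute the metabelianization of each. This should show that exactly \(T_{n,1}^i,\dots,T_{n,n}^i\) and \(S_n^i\) have \(S/S''\simeq M_n^i\). The counting \(\mathrm{cl}=2n+5\) forces the total number of class-increasing steps, which is why \(u\) ranges over \(1\le u\le n\) and the cover has \(n+1\) elements. This gives (1).

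For (2), I would compute \(d_2\) as the \(p\)-multiplicator rank. The groups \(T_{n,u}^i\) have \(d_2=3\), because they end with a step of size \(1\) from a vertex of nuclear rank \(\ge1\). The group \(S_n^i\), being a terminal step-\(2\) leaf with trivial nucleus, has \(d_2=2\), so it is the unique balanced group. Together with Corollary \ref{cor:Quadratic}, this yields \(\mathrm{cov}_\ast(M_n^i)=\lbrace S_n^i\rbrace\).

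For (3), I would verify directly that \(\mathcal{Q}_c^{(e,\ell)}\) is a \(\sigma\)-group of class \(c\) and order \(3^{3n+8}\), with derived length \(3\), metabelianization \(M_n^i\), and a balanced presentation. Uniqueness in (2) then forces \(\mathcal{Q}_c^{(e,\ell)}\simeq S_n^i\). The match between \(\ell\) and \(i\) comes from computing the TKT of the quotient with the transfer algorithm \cite[Alg. 3.3]{Ma2018}. Item (4) is then bookkeeping along the two paths: there are \(2n+1\) edges of size \(1\) from \(F\) to \(M\), and \(2n+1\) edges of weights \(2,1,\dots,2\) from \(F\) to \(S\), totalling \(3n+2\). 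The orders are \(3^{2n+7}\) and \(3^{3n+8}\), which gives the stated increments.

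The main obstacle is the uniformity in \(n\) in the second step. The exclusion of further non-metabelian descendants can only be checked by computer for bounded \(n\). To extend it, I would argue periodicity of the relevant branches of the pruned \(\sigma\)-descendant tree, with period \(2\) along the main trunk, as in \cite{dS2001,EkLG2008}. The cover limit \(\mathcal{C}^{(e)}\) would then serve as a parametrized witness that covers all states, in line with the philosophy of \S\ \ref{s:Infinity}.
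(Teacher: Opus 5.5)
Your bookkeeping for (4) is correct, and the overall plan (bounded enumeration below \(F\), multiplicator ranks, uniqueness, a uniform device in \(n\)) is compatible with what the paper relies on. Several concrete justifications, however, fail.

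\textbf{Where the paths separate.} Propositions \ref{prp:TopDownQ} and \ref{prp:TopDownU} only exhibit six metabelian groups per class. They say nothing about the other step-\(1\) descendants, and your claim that these are metabelian is false: the paper itself lists non-metabelian immediate descendants \(R_{n,j}^i=M_n^i-\#1;j\) of coclass-\(2\) vertices. What works is an order comparison, using the lower exponent-\(p\) central series \(P_k\) (to which step sizes refer) rather than \(\gamma_k\). Let \(V\) be the last common vertex, and let \(D=S/P_k(S)\) and \(C=M/P_k(M)\) be the next vertices on the two paths. Then \(C\simeq D/D''\) with \(D''\neq 1\), so the edge to \(D\) has strictly larger step size than the edge to \(C\), which forces \(\nu(V)\ge 2\). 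The case \(V=M\) is excluded by \(\nu(M_n^i)=0\). Along the path of \(M\), the vertices are handled as follows:
\(\langle 9,2\rangle\) has only children of \(p\)-class \(2\), which are metabelian;
\(\langle 27,3\rangle\) already uses its maximal step \(2\) towards \(R\);
one must check \(\nu(R)=1\) and \(\nu(X_c^2)=1\) for \(c\ge 5\).
Only then is \(F\) the unique branch point. Moreover, interior vertices of the path of \(S\) inherit only an automorphism inverting \(H^1\) (compare \(\pi(M)\) for the complex types). So restricting the enumeration to genuine \(\sigma\)-groups at interior nodes is not justified.

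\textbf{Item (2).} The relation rank is not determined by the step size of the last edge or by terminality: \(M_n^i\) is itself a terminal leaf with \(\mu=3\). You have to compute the \(3\)-multiplicator ranks directly, as the paper does (\(\mu(S_n^i)=2\), \(\mu(M_n^i)=3\)).

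\textbf{Counting the cover.} Your claim that \(\mathrm{cl}=2n+5\) forces the number of steps assumes \(\mathrm{cl}(S)=\mathrm{cl}(M)\). But \(S/S''\simeq M\) only gives \(\gamma_{2n+6}(S)\le S''\), and for the complex types the Schur \(\sigma\)-groups \(S_{n,t}^i\) do have larger class than \(M\). Finiteness of \(\mathrm{cov}(M_n^i)\) is exactly the statement that the pruned search terminates: every branch, including everything below the candidates, must lose the metabelianization \(M_n^i\). This has to be proved, not used to count \(u\).

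\textbf{Item (3).} The TKT cannot separate \(i=3\) from \(i=4\), since they share type E.14, resp. E.9. Matching \(\ell\) with \(i\) requires identifying the metabelianization of \(\mathcal{Q}_c^{(e,\ell)}\) with the specific \(M_n^i\), which is what the algorithm invoked in the paper's proof checks. Also, deriving (3) from the uniqueness in (2) makes (3) inherit the bound on \(n\) from your enumeration. The paper reverses this logic: it uses the parametrized quotients \(\mathcal{Q}_c^{(e,\ell)}\) as the uniform construction of \(S_n^i\), verified by machine for \(n\le 8\).
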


\begin{proof}
We compare the uniform generator rank \(d_1=2\) of all involved groups \(M_n^i\) and \(S_{n}^{i}\)
with odd class \(c=2n+5\ge 5\), coclass \(r\ge 2\), and counter \(2\le i\le 4\) specifying the simple TKT,
with their relation rank \(d_2\).
Since \(d_2=\mu\) and the \(3\)-multiplicator rank is \(\mu=2\) for
\(S_{n}^{i}\) with odd class \(c=2n+5\ge 5\) and coclass \(r=n+3\ge 3\),
but \(\mu=3\) for \(M_n^i\) with \(r=2\),
only the groups \(S_{n}^{i}\) are Schur \(\sigma\)-groups with balanced presentation \(d_2=2=d_1\),
and are therefore admissible as \(3\)-class field tower groups \(S\) of imaginary quadratic fields \(k\),
according to our corrected version, Theorem
\ref{thm:Unramified},
of the Shafarevich Theorem and Corollary
\ref{cor:Quadratic}.
Finally we remark that the nuclear rank is \(\nu=0\) for \(M_n^i\) with odd class \(c=2n+5\),
i.e., the metabelianizations \(M=M_n^i\) are terminal leaves.

Since the finite class-\(c\) quotients \(\mathcal{Q}^{(e,k)}_c\) with odd \(c\ge 5\) and parameter \(\ell=-1,0,+1\)
of the infinite cover limit \(\mathcal{C}^{(e)}\), \(e=0,1\),
contain the unlimited nilpotency class \(c\), Theorem
\ref{thm:ExplicitCovers}
can be stated without bounds,
in contrast to the careful formulation of 
\cite[Thm. 3.5 and Cnj. 3.3, pp. 98--100]{Ma2018}
with bound \(n\le 8\) for the state parameter \(n\).

We remark that the execution of Algorithm 3.2 in
\cite[pp. 97--98]{Ma2018}
experimentally proves the isomorphisms \(\mathcal{Q}^{(e,\ell)}_c\simeq S_{n}^{i}\),
with odd class \(c=2n+5\) and suitably corresponding parameters \(\ell\) and \(i\),
for \(5\le c\le 21\), that is, \(0\le n\le 8\).
The algorithm is initialized by the starting group \(R=X_3^{2}\) of coclass \(2\),
and navigates through the mainline vertices \(X_{c}^{2}\), \(c\ge 3\), of the coclass tree
\(\mathcal{T}^{2}\left(X_{3}^{2}\right)\).
A subroutine tests the transfer kernel type TKT of all descendants and selects
either the unique capable descendant with skeleton type \(\mathrm{c}.18\), respectively \(\mathrm{c}.21\),
or the unique descendant with simple hull type \(\mathrm{E}.6\), respectively \(\mathrm{E}.8\),
or the first or second descendant with simple hull type \(\mathrm{E}.14\), respectively \(\mathrm{E}.9\).
The selected non-mainline vertex is always checked for isomorphism
to the metabelianization of the appropriate quotient \(\mathcal{Q}^{(e,\ell)}_c\).
However, the bound \(n\le 8\),
is only due to the unavoidable limitations of the computational algebra system Magma
\cite{MAGMA2026},
and does not restrict the generality of the infinite cover limit \(\mathcal{C}^{(e)}\)
and its quotients \(\mathcal{Q}^{(e,\ell)}_c\).
\end{proof}


\noindent
A pure bottom up approach without top down constructions,
instead of using Algorithm 3.2 in
\cite[pp. 97--98]{Ma2018},
is able to reach coclass \(r=32\), nilpotency class \(c=63\), and logarithmic order \(r+c=95\),
without surpassing internal limits of Magma,
and thus provides even stronger support for Theorem
\ref{thm:ExplicitCovers},
from the experimental point of view.

Currently, a drawback of the quotients \(\mathcal{Q}^{(0,+1)}_c\) with \(e=0\), \(\ell=+1\), \(c\ge 5\),
which lead into a completely different realm,
namely the complicated brushwood of the complex transfer kernel type \(\mathrm{H}.4\),
is that they cannot be interpretated as covers of metabelianizations \(M_n^i\).


A common feature of all Theorems
\ref{thm:SimpleStageCriterion}--\ref{thm:ComplexPathStructure}
together are certain general requirements for Galois groups
\(\mathrm{Gal}(\mathrm{F}_p^q(k)/k)\), with a positive integer \(q\),
of the stages of the \(p\)-class field tower with an odd prime \(p\),
of quadratic number fields \(k=\mathbb{Q}(\sqrt{d})\).

Generally, any Galois group
\(G=\mathrm{Gal}(\mathrm{F}_3^q(k)/k)\), with a positive integer \(q\),
must be a \(\sigma\)-group,
possessing an automorphism which acts as inversion
on both cohomology groups \(H^i(G,\mathbb{F}_3)\), with \(i=1,2\), and the 
\(3\)-class field tower group \(S=\mathrm{Gal}(\mathrm{F}_3^\infty(k)/k)\)
additionally must satisfy
the Shafarevich bound for the relation rank,
\(2\le d_2(S)\le d_{2,\mathrm{max}}\)
with \(d_{2,\mathrm{max}}=2\) for imaginary \(k\), enforcing a Schur \(\sigma\)-group,
and \(d_{2,\mathrm{max}}=3\) for real \(k\), admitting also a Schur\(+1\) \(\sigma\)-group.
\cite{Sh1964,Ma2015c}.

For each simple type in section E
\cite[p. 36]{SoTa1934},
and separately for each complex type in the sections G and H
\cite[pp. 36--38]{SoTa1934},
the candidates for the metabelianization \(M=S/S^{\prime\prime}=\mathrm{Gal}(\mathrm{F}_3^2(k)/k)\)
and for the \(3\)-class field tower group \(S=\mathrm{Gal}(\mathrm{F}_3^\infty(k)/k)\) itself
are identified by a search along the descendant tree of
either the root \(Q=\langle 729,49\rangle\) or \(U=\langle 729,54\rangle\)
\cite{AHL1977,BEO2005},
constructed with the \(p\)-group generation algorithm by Newman
\cite{Nm1975,Nm1989}
and O'Brien
\cite{Ob1990,HEO2005}.
The claims for the ground state \(n=0\) and the lower excited states \(n=1,2,3,4\) 
are covered by initial computations with Magma
\cite{BCP1997,BCFS2025,MAGMA2026,MAGMA6561},
and evidence of statements for higher excited states \(n\ge 5\) is provided
for the metabelianizations \(M_n^i\) either by Corollary
\ref{cor:Metabelianization}
or by the \textit{periodicity of tree branches}, which was proven independently in
\cite{dS2001,EkLG2008,ELNO2013,LG1994,Sv1994}.


In the notation with relative identifiers
\cite{GNO2006},
and starting from one of the forks with bifurcation
\(F\in\lbrace Q,U\rbrace\) as roots,
for simple types,
the metabelianizations turn out to be of the shape
\(M=F(-\#1;1-\#1;1)^n\lbrack-\#1;i\rbrack\),
and
\(M=F(-\#1;1-\#1;1)^n\lbrack-\#1;i-\#1;1\rbrack\)
for complex types,
since, in the latter situation, for \(G=\pi(M)=F(-\#1;1-\#1;1)^n\lbrack-\#1;i\rbrack\),
an automorphism acts as inversion
on \(H^1(G,\mathbb{F}_3)\) only but not on \(H^2(G,\mathbb{F}_3)\).
So the group \(M\) is still a vertex of the coclass tree
\(\mathcal{T}^2(F)\) with exclusive step size \(s=1\)
and thus with fixed coclass \(2\).
It has depth \(\mathrm{dp}=1\) for simple types and
\(\mathrm{dp}=2\) for complex types.

However, periodic bifurcations must actually be exploited for
\(S=F(-\#2;1-\#1;1)^n\lbrack-\#2;i\rbrack\),
if the type is simple, and for
\(S=F(-\#2;1-\#1;1)^n\lbrack-\#2;i-\#1;1\rbrack\lbrack-\#2;1-\#1;1\rbrack^{t}\lbrack-\#2;2\rbrack\),
when the type is complex,
due to infinite branches with infinitely many candidates for each state \(n\ge 0\),
expressed by the second parameter \(t\ge 0\).


\subsection{Proofs for simple types}
\label{ss:SimpleProofs}

\begin{proof}
We prove Theorems
\ref{thm:SimpleStageCriterion}
and
\ref{thm:SimplePathStructure}
together.
The group theoretic invariants of the metabelianizations \(M_n^i\)
and the Schur \(\sigma\)-groups \(S_n^i\)
for any state \(n\ge 0\) and counters \(2\le i\le 4\) are: \\
logarithmic order,
\(\mathrm{lo}(M_n^i)=\mathrm{lo}(F)+2n+1=2n+7\),
\(\mathrm{lo}(S_n^i)=\mathrm{lo}(F)+3n+2=3n+8\), \\
nilpotency class,
\(\mathrm{cl}(M_n^i)=\mathrm{cl}(F)+2n+1=2n+5\),
\(\mathrm{cl}(S_n^i)=\mathrm{cl}(F)+2n+1=2n+5\), and \\
coclass,
\(\mathrm{cc}(M_n^i)=\mathrm{cc}(F)+0=2\),
\(\mathrm{cc}(S_n^i)=\mathrm{cc}(F)+n+1=n+3\), \\
since the fork possesses the invariants \(\mathrm{lo}(F)=6\), \(\mathrm{cl}(F)=4\), \(\mathrm{cc}(F)=2\). 

The invariants of the Schur\(+1\) \(\sigma\)-groups \(T_{n,u}^i\)
for excited states \(n\ge 1\), \(1\le u\le n\) and counters \(2\le i\le 4\) are: \\
logarithmic order,
\(\mathrm{lo}(T_{n,u}^i)=\mathrm{lo}(F)+3u+2(n-u)+1=2n+2u+7\), \\
nilpotency class,
\(\mathrm{cl}(T_{n,u}^i)=\mathrm{cl}(F)+2u+2(n-u)+1=2n+5\), and \\
coclass,
\(\mathrm{cc}(T_{n,u}^i)=\mathrm{cc}(F)+u=u+2\). 

In
\cite[Dfn. 3.3, Eqn. 6--7, p. 97]{Ma2018},
an infinite \textit{cover limit} group \(\mathcal{C}^{(e)}\),
due to M. F. Newman, is defined for two parameter values \(e=0,1\),
together with class-\(c\) quotients \(\mathcal{Q}^{(e,k)}_c\) of \(\mathcal{C}^{(e)}\)
with parameter \(k=-1,0,+1\) and nilpotency class \(c\ge 4\).
In
\cite[Thm. 3.5, Eqn. 8--12, pp. 98--100]{Ma2018},
it is shown that, for odd nilpotency class \(c=2n+5\ge 5\),
\(\mathcal{Q}^{(0,0)}_c\simeq S_n^2\), \(\mathcal{Q}^{(0,-1)}_c\simeq S_n^4\)
on the \(Q\)-tree, and
\(\mathcal{Q}^{(1,0)}_c\simeq S_n^2\), \(\mathcal{Q}^{(1,-1)}_c\simeq S_n^3\), \(\mathcal{Q}^{(1,+1)}_c\simeq S_n^4\)
on the \(U\)-tree.
Note that in \cite{Ma2018}, the Schur \(\sigma\)-group \(S_n^i\) is denoted by \(S_{2n+5,i-1}^{(n+3)}\) for \(2\le i\le 4\),
emphasizing the nilpotency class \(c=2n+5\) and the coclass \(r=n+3\), for each state \(n\ge 0\).
The results are translated to the present notation in Theorem
\ref{thm:ExplicitCovers}
(with \(k\) replaced by \(\ell\)).

Theorem
\ref{thm:SimplePathStructure}
is now an immediate consequence of the
parametrized polycyclic pc-presentations in Proposition
\ref{prp:TopDownQ}--\ref{prp:TopDownU}
and the relative identifiers in Corollary
\ref{cor:Quadratic},
which give the shape of \(M_n^i\),
the quotients of the mainline-\(r\) limit groups \(\mathcal{L}_{\mp 1}^{r}\) in Theorem
\ref{thm:MainLines},
which give the complete skeleton, and
the quotients of the cover limit group \(\mathcal{C}^{(e)}\) in Theorem
\ref{thm:ExplicitCovers},
which give the covers \(S_n^i\).

Theorem
\ref{thm:SimpleStageCriterion}
follows by computation of the abelian type invariants of second order, \(\alpha^{(2)}(k)\),
of all groups \(M_n^i\), \(S_n^i\), \(T_{n,u}^i\).
\end{proof}


\subsection{Proofs for complex types}
\label{ss:ComplexProofs}

\begin{proof}
We prove Theorems
\ref{thm:ComplexStageCriterion}
and
\ref{thm:ComplexPathStructure}
together.
So far we have only paid attention to the Schur \(\sigma\)-groups
among the candidates for \(S\).
For \textit{imaginary} \(k\), these are the only non-metabelian candidates.
For \textit{real} \(k\) with \textit{simple} types,
the non-metabelian Schur\(+1\) \(\sigma\)-groups \(T_{n,u}^i\)
arise as additional candidates for \(S\).
For \textit{real} \(k\) with \textit{complex} types, however,
various further non-metabelian Schur\(+1\) \(\sigma\)-groups
are also admissible.
They all share a common metabelianization \(M=S/S^{\prime\prime}\).

At this point, the (logarithmic) abelian type invariants of the second order
\(\alpha^{(2)}(k)\) come into the play. 
For \textit{simple} types,
the metabelianization \(M\) is a terminal vertex of its descendant tree, and
we obtain \textit{sharp necessary and sufficient criteria}, since
the \textit{regular} components of the \textit{stabilization} are
tame, \(\lbrack 21;\alpha_0,\mathbf{(21)^3}\rbrack\), in the case \(S=M\) of a tower with two stages, and
wild, \(\lbrack 21;\alpha_0,\mathbf{(31)^3}\rbrack\), when the tower has precisely three stages.
On the tree with root \(Q\),
the \textit{singular} component of the \textit{stabilization} admits an additional decision, it is
tame, \(\lbrack 1^3;\alpha_0,\mathbf{(1^3)^3},(1^2)^9\rbrack\), when \(\ell_3(k)=2\),
i.e. \(S=M\) is a metabelian Schur\(+1\) \(\sigma\)-group, and
wild, \(\lbrack 1^3;\alpha_0,\mathbf{(21^2)^3},(1^2)^9\rbrack\), when \(\ell_3(k)=3\),
i.e. \(S\ne M\) is a non-metabelian Schur \(\sigma\)-group \(S_n^i\) or Schur\(+1\) \(\sigma\)-group \(T_{n,u}^i\).
Here the \textbf{hetero}cyclic \textit{polarization} \(\lbrack(n+3,n+2);\alpha_0,(n+\mathbf{3},n+1,1)^3\rbrack\)
is always tame and does not enable a decision.
See \cite[\S\ 4, Thm. 2]{AoMa2025} for this terminology concerning the first order ATI.
The Schur \(\sigma\)-groups \(S\) are drawn
in Figure \ref{fig:TreeStrucSimple} for \textit{simple} types, and
in Figure \ref{fig:TreeStrucComplex} for \textit{complex} types.
The path between \(S\) and \(M\) always represents a so-called \textit{fork topology}
with respect to the first bifurcation \(F\) in the skeleton.

For \textit{real} \(k\) and \textit{complex} types,
there arise additional possibilities of Schur\(+1\) \(\sigma\)-groups,
besides the Schur \(\sigma\)-groups described for imaginary \(k\).
These additional candidates for complex types
are not drawn in Figure \ref{fig:TreeStrucComplex}
in order to avoid too complicated tree structures.
For the GS, they are drawn in Figure
\ref{fig:ComplexTopologies}.

Firstly, a so-called \textit{child topology} for the path between \(S\) and \(M\)
can be realized by one of the immediate non-metabelian descendants
\(R_{0j}^i=M-\#1;j\), \(j=1,2,3\) for type H.4, \(j=1,2\) for type G.16,
of the non-terminal metabelianization
\(M=M_n^i=F(-\#1;1-\#1;1)^n\lbrack-\#1;i-\#1;1\rbrack\).
They share a common tame stabilization.
The polarization \(\lbrack(n+3,n+2);\alpha_0,(n+\mathbf{4},n+1,1)^3\rbrack\) is wild
when \(j=1,2\) for type H.4 and \(j=1\) for type G.16.
These are the only cases where the tower even has \textit{precisely three stages}.
The crux concerning two stage towers is that \(j=3\) for type H.4 and \(j=2\) for type G.16
shows a tame polarization \(\lbrack(n+3,n+2);\alpha_0,(n+\mathbf{3},n+1,1)^3\rbrack\)
and thus cannot be distinguished from a metabelian tower.

Secondly, further \textit{fork topologies} for the path between \(S\) and \(M\)
can be due to immediate descendants
\(G\lbrack-\#1;j\rbrack\)
with step size \(s=1\) of
\(G=\pi(S_{n,t}^i)=F(-\#2;1-\#1;1)^n\lbrack-\#2;i-\#1;1\rbrack\lbrack-\#2;1-\#1;1\rbrack^{t}\)
with \(i=5,6\).
They share a common wild stabilization.
The polarization \(\lbrack(n+3,n+2);\alpha_0,(n+\mathbf{4},n+1,1)^3\rbrack\) is wild
when \(j=3\) for type H.4 and \(j=1\) for type G.16.
The polarization \(\lbrack(n+3,n+2);\alpha_0,(n+\mathbf{3},n+1,1)^3\rbrack\) is tame
when \(j=1,4\) for type H.4 and \(j=3,4\) for type G.16.
For both types, \(j=2\) gives rise to a finite sapling
with depth \(2n\)
sharing a common wild polarization.
See Figure
\ref{fig:ComplexTopologies},
for the ground state GS \(n=0\).

In Theorem
\ref{thm:ComplexPathStructure},
the modified shape of \(M_n^i=F(-\#1;1-\#1;1)^n\lbrack-\#1;i-\#1;1\rbrack\)
is even proven for unbounded states \(n\ge 0\),
due to the parametrized polycyclic pc-presentations in Proposition
\ref{prp:TopDownQ}--\ref{prp:TopDownU}
and an easy generalization of the relative identifiers in Corollary
\ref{cor:Quadratic}.
The shape of the covers \(S_{n,t}^i\) is investigated experimentally
for bounded states \(0\le n\le 4\) only.

Theorem
\ref{thm:ComplexStageCriterion}
follows by computation of the abelian type invariants of second order, \(\alpha^{(2)}(k)\),
of all involved groups.
\end{proof}


\section{Recall of simple types in the imaginary quadratic scenario, \(d<0\)}
\label{s:RecallImaginary}

\noindent
Suppose \(p\) is an \textit{odd} prime number.
Let \(k=\mathbb{Q}(\sqrt{d})\), with fundamental discriminant \(d<0\), be an \textit{imaginary} quadratic number field
possessing an elementary bicyclic \(p\)-class group \(\mathrm{Cl}_p(k)=(\mathbb{Z}/p\mathbb{Z})^2\).
Then the Galois group \(S=\mathrm{Gal}(\mathrm{F}_p^\infty(k)/k)\)
of the maximal unramified pro-\(p\)-extension of \(k\)
is a non-abelian Schur \(\sigma\)-group
\cite{Ag1998}
with balanced presentation and a generator and relator inverting automorphism \(\sigma\) of order \(2\)
\cite{KoVe1975}.
The second derived quotient \(S/S^{\prime\prime}\) of \(S\) is isomorphic to the Galois group
\(M=\mathrm{Gal}(\mathrm{F}_p^2(k)/k)\)
of the maximal metabelian unramified \(p\)-extension of \(k\).


\noindent
For \(p=3\), we recall the well-known facts concerning the
ground state (GS) and the excited states (ES) of \(3\)-tower groups \(S\) of \(k\)
with capitulation type \(\varkappa(k)\) in section E of Scholz and Taussky
\cite{SoTa1934}:

\begin{theorem}
\label{thm:ThreeTowerGround}
Let the (logarithmic) abelian type invariants (ATI) of
the \(4\) unramified cyclic cubic extensions \(E_1,\ldots,E_4\) of \(k\) be assigned by
either \(\alpha(k)=\lbrack 32,111,21,21\rbrack\) or \(\alpha(k)=\lbrack 32,21,21,21\rbrack\).
In this context, we speak about the \textbf{ground state} of the groups \(M\) and \(S\)
with \textbf{hetero}cyclic polarization \((32)\),
characterized by the nilpotency class \(\mathrm{cl}(M)=3+2=5\) of \(M\).
In terms of the identifiers \(\langle\mathrm{ord},\mathrm{id}\rangle\) in the SmallGroups database
\cite{BEO2005}
of the computer algebra system Magma
\cite{MAGMA2026,MAGMA6561},
the groups \(M\) and \(S\) are given as follows in dependence of the capitulation type:
\begin{enumerate}
\item
if \(\varkappa(k)\sim (1122)\), type E.6,
then \(M=\langle 2187,288\rangle\) and \(S=\langle 6561,616\rangle\);
\item
if \(\varkappa(k)\sim (3122)\), type E.14,
then either \(M=\langle 2187,289\rangle\) and \(S=\langle 6561,617\rangle\) \\
or \(M=\langle 2187,290\rangle\) and \(S=\langle 6561,618\rangle\);
\item
if \(\varkappa(k)\sim (1231)\), type E.8,
then \(M=\langle 2187,304\rangle\) and \(S=\langle 6561,622\rangle\);
\item
if \(\varkappa(k)\sim (2231)\), type E.9,
then either \(M=\langle 2187,302\rangle\) and \(S=\langle 6561,620\rangle\) \\
or \(M=\langle 2187,306\rangle\) and \(S=\langle 6561,624\rangle\).
\end{enumerate}
The path between \(M\) and \(S\) is drawn in Figure
\ref{fig:TreeStrucSimple}, starting at the symbol GS.
For the types E.6 and E.14, the root is \(R=\langle 243,6\rangle\),
and for the types E.8 and E.9, the root is \(R=\langle 243,8\rangle\).
\end{theorem}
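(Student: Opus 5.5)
The plan is to first pin down \(M\) from the Artin pattern, and then \(S\) from the cover of \(M\). Since \(k\) is imaginary quadratic, Corollary \ref{cor:Quadratic} forces \(S\) to be a Schur \(\sigma\)-group with \(d_2(S)=d_1(S)=2\). The ATI \(\alpha(k)=[32,111,21,21]\) (respectively \([32,21,21,21]\)) together with the TKT in section E place \(M\) on the \(Q\)-tree (respectively the \(U\)-tree). The justification is the reduction recalled in \S\ \ref{s:Proofs}: via \cite[\S\ 4, Thm. 2]{AoMa2025}, only \(\langle 243,6\rangle\) and \(\langle 243,8\rangle\) survive as roots. Here the elementary tricyclic component \(111\) singles out the root \(\langle 243,6\rangle\), and its absence singles out \(\langle 243,8\rangle\).

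Next I determine the class. The polarization \((32)\) is the state \(n=0\) of Equation \eqref{eqn:States}, so \(\mathrm{cl}(M)=5\) and \(\mathrm{lo}(M)=7\), i.e.\ \(|M|=2187\). By Corollary \ref{cor:Metabelianization}, \(M=M_0^i=F-\#1;i\) with \(F\in\{Q,U\}\), and the counter \(i\) is read off from the TKT: \(i=2\) for E.6/E.8 and \(i\in\{3,4\}\) for E.14/E.9. To match these relative identifiers with the absolute SmallGroups identifiers \(\langle 2187,288\rangle,\ldots,\langle 2187,306\rangle\), I will list the immediate descendants of \(Q=\langle 729,49\rangle\) and \(U=\langle 729,54\rangle\) in Magma. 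For each I compute the TKT by Artin transfers, as in the proof of Proposition \ref{prp:TopDownQ}, and check that the metabelian presentations of Propositions \ref{prp:TopDownQ}--\ref{prp:TopDownU} with \(c=5\) yield exactly these groups. I will also verify that \(d_2(M_0^i)=3\), so that \(M\ne S\); hence \(\ell_3(k)\ge3\).

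For \(S\), I apply Theorem \ref{thm:ExplicitCovers}(2) with \(n=0\): \(\mathrm{cov}_\ast(M_0^i)=\{S_0^i\}\), so \(S=S_0^i=F-\#2;i\), of order \(3^{8}=6561\), class \(5\) and coclass \(3\). It remains to identify \(S_0^i\) with \(\langle 6561,616\rangle,\ldots,\langle 6561,624\rangle\) and to pair each with the correct \(M\). I will do this by running the step-size-\(2\) descendants of \(F\) and comparing each descendant's metabelianization \(G/G''\) with the \(M\) found above. Alternatively, I can construct the quotients \(\mathcal{Q}^{(e,\ell)}_5\) of Definition \ref{dfn:CoverLimit} and use the correspondence of Equation \eqref{eqn:IsomCovSecE}. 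Since \(S_0^i\) has derived length \(3\), this also gives \(\ell_3(k)=3\), consistent with Theorem \ref{thm:SimpleStageCriterion}.

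The main obstacle is the bookkeeping of the two E.14 and two E.9 branches, since the pairing \(289\leftrightarrow617\), \(290\leftrightarrow618\) is not forced by the TKT alone. This correspondence must be established by explicit isomorphism tests between \(S/S''\) and the candidates \(M\). The indistinguishability of the two variants from the Artin pattern is why the statement presents them as alternatives.
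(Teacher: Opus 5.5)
The paper gives no argument of its own for this theorem. It is recalled from the literature: type E.9 is referred to \cite{BuMa2015}, and the other types to \cite{Ma2015a,Ma2015b}, where \(M\) and \(S\) were found by computations in the descendant trees of \(\langle 729,49\rangle\) and \(\langle 729,54\rangle\). You take a different route. You obtain the ground state as the case \(n=0\) of the paper's later general results (Corollary \ref{cor:Metabelianization} for \(M\), Theorem \ref{thm:ExplicitCovers}(2) for \(S\)), then translate relative identifiers into SmallGroups identifiers by machine. This fits the statement into the uniform theory, and your handling of \(M\), of \(d_2(M_0^i)=3\), and of the E.14/E.9 pairings by isomorphism tests is fine. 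For \(n=0\), however, it is no more elementary than the cited proofs. Theorem \ref{thm:ExplicitCovers} is itself established computationally, and its \(n=0\) case amounts to the same descendant computation.

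One step does not go through as written: ``\(\mathrm{cov}_\ast(M_0^i)=\lbrace S_0^i\rbrace\), so \(S=S_0^i\).'' By Definition \ref{dfn:Cover} the cover consists of \emph{finite} groups. The tower group \(S\), however, is a priori only a pro-\(3\) group with \(d_1(S)=d_2(S)=2\). Its finiteness is part of what the theorem asserts and must be proved before the cover can be invoked. A short argument with the lower exponent-\(3\) central series \((P_c(S))_{c\ge 1}\) fills the gap, and it even makes Theorem \ref{thm:ExplicitCovers} unnecessary for \(n=0\).

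First, \(S^{\prime\prime}\le\gamma_4(S)\le P_4(S)\), so \(S/P_4(S)\) is metabelian and equals \(M/P_4(M)\simeq F\). Next, \(M=F-\#1;i\) has exponent-\(3\) class \(5\), so \(P_5(S)\le S^{\prime\prime}\) and \(S/P_5(S)\) has metabelianization \(M\). It is an immediate descendant of \(F\): otherwise \(P_4(S)=P_5(S)\), and by the pro-\(3\) Nakayama argument \(P_4(S)=1\), i.e.\ \(S=F\), which is incompatible with \(S/S^{\prime\prime}\simeq M\).

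Suppose the step size were \(1\). Then \(S/P_5(S)\simeq M\). Since \(M\) is terminal, \(P_6(S)=P_5(S)\), hence \(P_5(S)=1\) and \(S=M\), contradicting \(d_2(S)=2<3=d_2(M)\). So \(S/P_5(S)\) is a step-size-\(2\) child of \(F\) with metabelianization \(M\). The comparison of metabelianizations of the children \(F-\#2;j\) that you already plan identifies it as \(F-\#2;i\). This group is terminal, so again \(P_6(S)=P_5(S)\), hence \(P_5(S)=1\), and \(S=F-\#2;i\) is finite.

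A minor point: your alternative via the quotients \(\mathcal{Q}^{(e,\ell)}_5\) cannot replace the descendant computation in every case. Equation \eqref{eqn:IsomCovSecE} supplies only one cover-limit quotient for type E.14, so one of the two E.14 variants on the \(Q\)-tree is not reached. The quotients with \(e=0\), \(\ell=+1\) lead to type H.4 instead, as the paper remarks after Theorem \ref{thm:ExplicitCovers}.
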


\begin{proof}
For \(\varkappa(k)\sim (2231)\), type E.9, the statement was proved in
\cite[Cor. 4.1.1, p. 775]{BuMa2015}.
For the other types, the proof was given in
\cite{Ma2015a}
and
\cite{Ma2015b}.
The trees with minimal discriminants \(d\) of prototypes are drawn in
\cite[Fig. 1--2, pp. 24--25]{Ma2015c}.
\end{proof}


\begin{remark}
\label{rmk:FixedPoints}
The four capitulation types in section E
can be distinguished by the number of fixed points (FP):
type E.14, \(\varkappa(k)\sim (3122)\), has \(0\) FP,
type E.6, \(\varkappa(k)\sim (1122)\), has \(1\) FP,
type E.9, \(\varkappa(k)\sim (2231)\), has \(2\) FP, and
type E.8, \(\varkappa(k)\sim (1231)\), has \(3\) FP.
They are \textit{simple} types.
\end{remark}


\begin{theorem}
\label{thm:ThreeTowerExcited}
With an integer \(n\ge 1\),
let the (logarithmic) abelian type invariants (ATI) of
the \(4\) unramified cyclic cubic extensions \(E_1,\ldots,E_4\) of \(k\) be fixed by
either \(\alpha(k)=\lbrack (n+3,n+2),111,21,21\rbrack\) or \(\alpha(k)=\lbrack (n+3,n+2),21,21,21\rbrack\).
In this context, we speak about the \textbf{\(n\)-th excited state} of the groups \(M\) and \(S\),
characterized by the nilpotency class \(\mathrm{cl}(M)=n+3+n+2=2n+5\) of \(M\).
Then the groups \(M\) and \(S\)
with \textbf{hetero}cyclic polarization \((n+3,n+2)\),
are given by the following paths:
\begin{equation}
\label{eqn:Structure}
\begin{aligned}
M &= R(-\#1;1)(-\#1;1-\#1;1)^n\lbrack-\#1;i\rbrack \text{ and } \\
S &= R(-\#1;1)(-\#2;1-\#1;1)^n\lbrack-\#2;i\rbrack,
\end{aligned}
\end{equation}
in terms of relative ANUPQ-identifiers
\cite{GNO2006}.
The dependency of the capitulation type is expressed by \(2\le i\le 4\).
For \(1\le n\le 4\), the path between \(M\) and \(S\) is drawn in Figure
\ref{fig:TreeStrucSimple}, starting at the symbol ES \(n\).
For the types E.6 and E.14, the root is \(R=\langle 243,6\rangle\),
and for the types E.8 and E.9, the root is \(R=\langle 243,8\rangle\).
\end{theorem}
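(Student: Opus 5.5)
The plan is to reduce Theorem \ref{thm:ThreeTowerExcited} to the results already established in \S\ \ref{s:Proofs}, chiefly Corollary \ref{cor:Metabelianization}, Theorem \ref{thm:MainLines} and Theorem \ref{thm:ExplicitCovers}. The only real new observation needed is a change of base point: the fork \(F\in\lbrace Q,U\rbrace\) is the immediate step-size-one descendant \(R-\#1;1\) of the root \(R=\langle 243,6\rangle\), resp. \(R=\langle 243,8\rangle\). Indeed \(Q=\langle 729,49\rangle\) and \(U=\langle 729,54\rangle\) are the mainline (counter \(1\)) children of their parents, as stated in the Introduction. Substituting \(F=R(-\#1;1)\) into Formula \eqref{eqn:SimplePath} turns it verbatim into Formula \eqref{eqn:Structure}. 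So the theorem follows once we know that, for an imaginary quadratic \(k\) with the prescribed Artin pattern, \(M=M_n^i\) and \(S=S_n^i\).

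Step 1 identifies \(M\) from the ATI. The polarization \((n+3,n+2)\) forces \(\mathrm{cl}(M)=2n+5\), which is odd; this is the state correspondence of Formula \eqref{eqn:States}. By the IPAD argument quoted at the start of \S\ \ref{s:Proofs}, \(M\) lies on the coclass-\(2\) tree of \(\langle 243,6\rangle\) or \(\langle 243,8\rangle\). The two alternatives \(\alpha(k)=[(n+3,n+2),111,21,21]\) and \([(n+3,n+2),21,21,21]\) correspond to these two trees, with the singular component \(1^3\) occurring only on the \(Q\)-tree. Proposition \ref{prp:TopDownQ}, resp. \ref{prp:TopDownU}, then lists the six metabelian groups of class \(c=2n+5\). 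The simple TKT of section E selects exactly the leaves with counter \(i\in\lbrace 2,3,4\rbrace\), and Corollary \ref{cor:Metabelianization} gives \(M=M_n^i\) in relative-identifier form.

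Step 2 identifies \(S\). Since \(k\) is imaginary, Corollary \ref{cor:Quadratic} forces \(S\) to be a Schur \(\sigma\)-group with \(S/S''\simeq M\). The metabelian \(M_n^i\) has \(d_2=3\) (from \(\mu=3\), as in the proof of Theorem \ref{thm:ExplicitCovers}), so \(S\neq M\) and \(S\) lies in \(\mathrm{cov}_\ast(M_n^i)\). By Theorem \ref{thm:ExplicitCovers}(2) this set is \(\lbrace S_n^i\rbrace\). Its path description \(F(-\#2;1-\#1;1)^n[-\#2;i]\) comes from Theorem \ref{thm:ExplicitCovers}(3),(4) together with the mainline limits of Theorem \ref{thm:MainLines}. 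Substituting \(F=R(-\#1;1)\) then gives the stated formula.

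The main obstacle is the identification of \(F\) with \(R-\#1;1\) in the ANUPQ relative numbering, together with the compatibility of counters \(i\) between the two numberings. I would settle this by a direct \(p\)-group generation computation from \(R\): check that the first step-one descendant is \(\langle 729,49\rangle\), resp. \(\langle 729,54\rangle\). For small \(n\) (e.g. \(n=0\), against Theorem \ref{thm:ThreeTowerGround}) I would check that the counters agree; periodicity of the tree branches then propagates the agreement to all \(n\ge 1\). The drawing claim for \(1\le n\le 4\) is simply a reference to Figure \ref{fig:TreeStrucSimple}.
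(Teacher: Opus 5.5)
Your reduction is correct, but it does not follow the paper's route. The paper's proof consists of a citation of \cite{Ma2015a} and \cite{Ma2015b}, plus the one observation you also make: \(R(-\#1;1)\) is \(\langle 729,49\rangle\), resp. \(\langle 729,54\rangle\), the first vertex with bifurcation. Everything else is imported from those earlier papers.

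You instead rebuild \(M=M_n^i\) and \(S=S_n^i\) inside the present paper, from Corollary \ref{cor:Metabelianization}, Corollary \ref{cor:Quadratic} and Theorem \ref{thm:ExplicitCovers}. You then substitute \(F=R(-\#1;1)\) into Formula \eqref{eqn:SimplePath}. What your route buys is scope. The paper's own Conclusion dates the cited proof to excited states \(n\le 7\). Theorem \ref{thm:ExplicitCovers}, by contrast, is stated for every \(n\ge 0\) via the cover limit \(\mathcal{C}^{(e)}\), so your argument yields the statement for all \(n\ge 1\) as formulated. The price is that you rest entirely on that cover-limit theorem, whose justification in the paper is partly computational.

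Two details need attention.

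First, by Definition \ref{dfn:Cover} the cover consists of \emph{finite} groups, whereas the tower group \(S\) is a priori an infinite pro-\(3\) group. Before writing \(S\in\mathrm{cov}_\ast(M_n^i)\) you must know that \(S\) is finite. Inside the paper you can take this from Theorem \ref{thm:SimpleStageCriterion}: it gives \(\ell_3(k)=3\) for \(d<0\), so \(S=\mathrm{Gal}(\mathrm{F}_3^3(k)/k)\) is finite. Alternatively, argue through the tree, writing \(P_c\) for the lower exponent-\(3\) central series. Once some quotient \(S/P_c(S)\) is shown to be the terminal leaf \(S_n^i\), you get \(P_c(S)=1\); otherwise \(S/P_{c+1}(S)\) would be a proper immediate descendant of a vertex with nuclear rank \(0\).

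Second, the \emph{main obstacle} you identify is not one. Relative ANUPQ identifiers are defined step by step relative to the immediate parent. Replacing \(F\) by \(R(-\#1;1)\) is therefore a purely textual substitution, and no counters need to be reconciled. The only input needed is that \(R-\#1;1\) equals \(\langle 729,49\rangle\), resp. \(\langle 729,54\rangle\), which is exactly what the paper records. Your fallback of propagating counters by periodicity would not work anyway. Periodicity of branches is an isomorphism of rooted trees and says nothing about the order in which the \(p\)-group generation algorithm numbers descendants.
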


\begin{proof}
The proof was given in
\cite{Ma2015a}
and
\cite{Ma2015b}.
Note that \(R(-\#1;1)=\langle 729,49\rangle\) for \(R=\langle 243,6\rangle\), and
\(R(-\#1;1)=\langle 729,54\rangle\) for \(R=\langle 243,8\rangle\),
is the first tree vertex with bifurcation (\(1\le s\le 2\)).
\end{proof}


\begin{example}
\label{exm:TreeStrucTri}
For the type E.9, \(\varkappa(k)\sim (2231)\),
the minimal discriminants of prototypes for the states are given by
\(d=-9\,748\) for GS,
\(d=-297\,079\) for ES 1,
\(d=-1\,088\,808\) for ES 2,
\(d=-11\,091\,140\) for ES 3, and
\(d=-94\,880\,548\) for ES 4,
computed with Magma
\cite{MAGMA2026}
using 
\cite{HEO2005}.
\end{example}


\section{Tree diagrams}
\label{s:Diagrams}

\noindent
In both tree diagrams of Figure
\ref{fig:TreeStrucSimple}
and
\ref{fig:TreeStrucComplex},
there are two possible realizations of the root \(R\)
on the top of the tree diagram.
Both of these finite \(3\)-groups
have nilpotency class \(\mathrm{cl}(R)=3\) and coclass \(\mathrm{cc}(R)=2\).
In both cases, \(R\) belongs to Hall's isoclinism family \(\Phi_6\)
\cite[p. 139]{Ha1940}.
On The \(Q\)-tree, the root is \(R=\langle 243,6\rangle\)
with skeleton type c.18, \(\varkappa(k)\sim(0122)\).
On The \(U\)-tree, the root is \(R=\langle 243,8\rangle\)
with skeleton type c.21, \(\varkappa(k)\sim(0231)\).

\subsection{The subtree \({}^\ast\mathcal{T}(R)\) pruned from complex types}
\label{ss:SimpleTree}


\noindent
Remarks concerning Figure
\ref{fig:TreeStrucSimple}:
The figure is restricted to the root region
up to logarithmic order \(\mathrm{lo}=20\),
and to the \textit{pruned subtree} \({}^\ast\mathcal{T}(R)<\mathcal{T}(R)\)
of the complete descendant tree of the root \(R\),
where the complex types are eliminated,
and only simple types in section E of Scholz and Taussky
\cite{SoTa1934}
remain as terminal leaves,
connected by mainline vertices of skeleton type.
By \({}^\ast\mathcal{T}^2(R)\), we denote the pruned subtree of \({}^\ast\mathcal{T}(R)\)
with fixed coclass \(\mathrm{cc}=2\), a so-called \textit{coclass tree}.
In terms of relative ANUPQ-identifiers
\cite{GNO2006},
the Schur \(\sigma\)-groups
\cite{BBH2017}
with simple types are given by periodic bifurcations as in Formula
\eqref{eqn:SimplePath},
\[
S=S_n^i:=R(-\#1;1)(-\#2;1-\#1;1)^n\lbrack -\#2;i\rbrack,\ 2\le i\le 4,
\]
a unique Schur \(\sigma\)-group for each state \(n\ge 0\).
Parentheses enclose vertices with skeleton type,
and brackets surround vertices with simple types.
The situation is unproblematic with respect to ambiguities,
since the Schur \(\sigma\)-group \(S\) with \(M=S/S^{\prime\prime}\)
is unique, but Schur\(+1\) \(\sigma\)-groups
\cite{BBH2021}
with simple types also exist.
The metabelianizations are given as second derived quotients as in Formula
\eqref{eqn:SimplePath},
\[
M=M_n^i:=S/S^{\prime\prime}=R(-\#1;1)(-\#1;1-\#1;1)^n\lbrack -\#1;i\rbrack,\ 2\le i\le 4,
\]
on the pruned coclass subtree \({}^\ast\mathcal{T}^2(R)\).


{\tiny


\begin{figure}[ht]
\caption{Schur \(\sigma\)-groups with \textbf{simple} type in excited states on \({}^\ast\mathcal{T}(R)\)}
\label{fig:TreeStrucSimple}

\setlength{\unitlength}{0.8cm}
\begin{picture}(18,17)(-8,-16)


\put(3,-0.5){\makebox(0,0)[lt]{Legend:}}
\put(3,-1.5){\circle{0.3}}
\put(3,-1.5){\circle*{0.1}}
\put(3.5,-1.5){\makebox(0,0)[lc]{\(\ldots\) fork \(F\) with bifurcation}}
\put(3,-2){\circle*{0.1}}
\put(3.5,-2){\makebox(0,0)[lc]{\(\ldots\) metabelian skeleton type group}}
\put(3,-2.5){\circle{0.2}}
\put(3.5,-2.5){\makebox(0,0)[lc]{\(\ldots\) metabelianization \(M=S/S^{\prime\prime}\)}}
\put(2.9,-3.1){\framebox(0.2,0.2){}}
\put(3.5,-3){\makebox(0,0)[lc]{\(\ldots\) Schur \(\sigma\)-group \(S\)}}
\put(3,-3.5){\makebox(0,0)[lc]{\(\nwarrow\)}}
\put(3.5,-3.5){\makebox(0,0)[lc]{\(\ldots\) projection \(S\to S/S^{\prime\prime}\)}}
\put(2.95,-4.05){\framebox(0.1,0.1){}}
\put(3.5,-4){\makebox(0,0)[lc]{\(\ldots\) Schur\(+1\) \(\sigma\)-group \(T\)}}
\put(3,-4.5){\makebox(0,0)[lc]{GS}}
\put(3.5,-4.5){\makebox(0,0)[lc]{\(\ldots\) ground state}}
\put(3,-5){\makebox(0,0)[lc]{ES}}
\put(3.5,-5){\makebox(0,0)[lc]{\(\ldots\) excited state}}


\put(-5,0.5){\makebox(0,0)[cb]{Order}}
\put(-5,0){\line(0,-1){15}}
\multiput(-5.1,0)(0,-1){16}{\line(1,0){0.2}}
\put(-5.2,0){\makebox(0,0)[rc]{\(243\)}}
\put(-4.8,0){\makebox(0,0)[lc]{\(3^5\)}}
\put(-5.2,-1){\makebox(0,0)[rc]{\(729\)}}
\put(-4.8,-1){\makebox(0,0)[lc]{\(3^6\)}}
\put(-5.2,-2){\makebox(0,0)[rc]{\(2\,187\)}}
\put(-4.8,-2){\makebox(0,0)[lc]{\(3^7\)}}
\put(-5.2,-3){\makebox(0,0)[rc]{\(6\,561\)}}
\put(-4.8,-3){\makebox(0,0)[lc]{\(3^8\)}}
\put(-5.2,-4){\makebox(0,0)[rc]{\(19\,683\)}}
\put(-4.8,-4){\makebox(0,0)[lc]{\(3^9\)}}
\put(-5.2,-5){\makebox(0,0)[rc]{\(59\,049\)}}
\put(-4.8,-5){\makebox(0,0)[lc]{\(3^{10}\)}}
\put(-5.2,-6){\makebox(0,0)[rc]{\(177\,147\)}}
\put(-4.8,-6){\makebox(0,0)[lc]{\(3^{11}\)}}
\put(-5.2,-7){\makebox(0,0)[rc]{\(531\,441\)}}
\put(-4.8,-7){\makebox(0,0)[lc]{\(3^{12}\)}}
\put(-5.2,-8){\makebox(0,0)[rc]{\(1\,594\,323\)}}
\put(-4.8,-8){\makebox(0,0)[lc]{\(3^{13}\)}}
\put(-5.2,-9){\makebox(0,0)[rc]{\(4\,782\,969\)}}
\put(-4.8,-9){\makebox(0,0)[lc]{\(3^{14}\)}}
\put(-5.2,-10){\makebox(0,0)[rc]{\(14\,348\,907\)}}
\put(-4.8,-10){\makebox(0,0)[lc]{\(3^{15}\)}}
\put(-5.2,-11){\makebox(0,0)[rc]{\(43\,046\,721\)}}
\put(-4.8,-11){\makebox(0,0)[lc]{\(3^{16}\)}}
\put(-5.2,-12){\makebox(0,0)[rc]{\(129\,140\,163\)}}
\put(-4.8,-12){\makebox(0,0)[lc]{\(3^{17}\)}}
\put(-5.2,-13){\makebox(0,0)[rc]{\(387\,420\,489\)}}
\put(-4.8,-13){\makebox(0,0)[lc]{\(3^{18}\)}}
\put(-5.2,-14){\makebox(0,0)[rc]{\(1\,162\,261\,467\)}}
\put(-4.8,-14){\makebox(0,0)[lc]{\(3^{19}\)}}
\put(-5.2,-15){\makebox(0,0)[rc]{\(3\,486\,784\,401\)}}
\put(-4.8,-15){\makebox(0,0)[lc]{\(3^{20}\)}}
\put(-5,-15){\vector(0,-1){1}}

\put(-2,0.2){\makebox(0,0)[cb]{Root \(R=X_3^2\)}}
\put(-1.7,-1){\makebox(0,0)[lc]{Fork \(F=X_4^2\)}}
\put(-2,-1){\circle{0.3}}
\put(-2,-1){\line(-1,-1){1}}
\put(-3.3,-2){\makebox(0,0)[rc]{GS}}
\put(-3,-2.3){\makebox(0,0)[ct]{\(M_0^i\)}}
\put(-3,-2){\circle{0.2}}

\multiput(-2,0)(0,-1){11}{\circle*{0.1}}

\put(-2,-1){\line(0,-1){2}}
\put(-1.8,-3){\makebox(0,0)[lc]{\(X_6^2\)}}
\put(-2,-3){\line(-1,-1){1}}
\put(-3.3,-4){\makebox(0,0)[rc]{ES \(1\)}}
\put(-3,-4.3){\makebox(0,0)[ct]{\(M_1^i\)}}
\put(-3,-4){\circle{0.2}}

\put(-2,-3){\line(0,-1){2}}
\put(-1.8,-5){\makebox(0,0)[lc]{\(X_8^2\)}}
\put(-2,-5){\line(-1,-1){1}}
\put(-3.3,-6){\makebox(0,0)[rc]{ES \(2\)}}
\put(-3,-6.3){\makebox(0,0)[ct]{\(M_2^i\)}}
\put(-3,-6){\circle{0.2}}

\put(-2,-5){\line(0,-1){2}}
\put(-1.8,-7){\makebox(0,0)[lc]{\(X_{10}^2\)}}
\put(-2,-7){\line(-1,-1){1}}
\put(-3.3,-8){\makebox(0,0)[rc]{ES \(3\)}}
\put(-3,-8.3){\makebox(0,0)[ct]{\(M_3^i\)}}
\put(-3,-8){\circle{0.2}}

\put(-2,-7){\line(0,-1){2}}
\put(-1.8,-9){\makebox(0,0)[lc]{\(X_{12}^2\)}}
\put(-2,-9){\line(-1,-1){1}}
\put(-3.3,-10){\makebox(0,0)[rc]{ES \(4\)}}
\put(-3,-10.3){\makebox(0,0)[ct]{\(M_4^i\)}}
\put(-3,-10){\circle{0.2}}

\put(-2,-9){\vector(0,-1){3}}
\put(-2,-12.3){\makebox(0,0)[cc]{\({}^\ast\mathcal{T}^2(R)\)}}
\put(-2,-12.7){\makebox(0,0)[cc]{Infinite}}
\put(-2,-13.1){\makebox(0,0)[cc]{mainline}}

\put(-2,0){\line(0,-1){1}}
\put(-2,-1){\line(1,-1){2}}
\put(0.2,-3){\makebox(0,0)[lc]{\(X_5^3\)}}
\put(0.2,-4){\makebox(0,0)[lc]{\(X_6^3\)}}
\put(-2,-1){\line(1,-2){1}}
\put(-1.1,-3.1){\framebox(0.2,0.2){}}
\put(-1,-3.4){\makebox(0,0)[ct]{\(S_0^i\)}}

\put(0,-3){\line(0,-1){7}}
\multiput(0,-4)(0,-2){4}{\line(-1,-1){1}}
\multiput(-1.05,-5.05)(0,-2){4}{\framebox(0.1,0.1){}}
\put(-1,-5.4){\makebox(0,0)[ct]{\(T_{1,0}^i\)}}
\put(-1,-7.4){\makebox(0,0)[ct]{\(T_{1,1}^i\)}}
\put(-1,-9.4){\makebox(0,0)[ct]{\(T_{1,2}^i\)}}
\put(-1,-11.4){\makebox(0,0)[ct]{\(T_{1,3}^i\)}}
\put(0,-4){\line(1,-1){2}}
\put(2.2,-6){\makebox(0,0)[lc]{\(X_7^4\)}}
\put(2.2,-7){\makebox(0,0)[lc]{\(X_8^4\)}}
\put(0,-4){\line(1,-2){1}}
\put(0.9,-6.1){\framebox(0.2,0.2){}}
\put(1,-6.4){\makebox(0,0)[ct]{\(S_1^i\)}}

\put(2,-6){\line(0,-1){5}}
\multiput(2,-7)(0,-2){3}{\line(-1,-1){1}}
\multiput(0.95,-8.05)(0,-2){3}{\framebox(0.1,0.1){}}
\put(1,-8.4){\makebox(0,0)[ct]{\(T_{2,0}^i\)}}
\put(1,-10.4){\makebox(0,0)[ct]{\(T_{2,1}^i\)}}
\put(1,-12.4){\makebox(0,0)[ct]{\(T_{2,2}^i\)}}
\put(2,-7){\line(1,-1){2}}
\put(4.2,-9){\makebox(0,0)[lc]{\(X_9^5\)}}
\put(4.2,-10){\makebox(0,0)[lc]{\(X_{10}^5\)}}
\put(2,-7){\line(1,-2){1}}
\put(2.9,-9.1){\framebox(0.2,0.2){}}
\put(3,-9.4){\makebox(0,0)[ct]{\(S_2^i\)}}

\put(4,-9){\line(0,-1){3}}
\multiput(4,-10)(0,-2){2}{\line(-1,-1){1}}
\multiput(2.95,-11.05)(0,-2){2}{\framebox(0.1,0.1){}}
\put(3,-11.4){\makebox(0,0)[ct]{\(T_{3,0}^i\)}}
\put(3,-13.4){\makebox(0,0)[ct]{\(T_{3,1}^i\)}}
\put(4,-10){\line(1,-1){2}}
\put(6.2,-12){\makebox(0,0)[lc]{\(X_{11}^6\)}}
\put(6.2,-13){\makebox(0,0)[lc]{\(X_{12}^6\)}}
\put(4,-10){\line(1,-2){1}}
\put(4.9,-12.1){\framebox(0.2,0.2){}}
\put(5,-12.4){\makebox(0,0)[ct]{\(S_3^i\)}}

\put(6,-12){\line(0,-1){1}}
\put(6,-13){\line(-1,-1){1}}
\put(4.95,-14.05){\framebox(0.1,0.1){}}
\put(5,-14.4){\makebox(0,0)[ct]{\(T_{4,0}^i\)}}
\put(6,-13){\vector(1,-1){2}}
\put(9,-15.5){\makebox(0,0)[cc]{Infinite}}
\put(9,-15.9){\makebox(0,0)[cc]{maintrunk}}
\put(6,-13){\line(1,-2){1}}
\put(6.9,-15.1){\framebox(0.2,0.2){}}
\put(7,-15.4){\makebox(0,0)[ct]{\(S_4^i\)}}

\put(-1,-3){\vector(-2,1){1.8}}
\put(1,-6){\vector(-2,1){3.8}}
\put(3,-9){\vector(-2,1){5.8}}
\put(5,-12){\vector(-2,1){7.8}}
\put(7,-15){\vector(-2,1){9.8}}

\end{picture}

\end{figure}

}


\noindent
The differences between the two possible realizations can be summarized as follows:

\noindent
\(\bullet\)
\textbf{The \(Q\)-tree:}
It is pruned from the complex type H.4, \(\varkappa(k)\sim(2122)\),
and the skeleton type of the mainline is c.18, \(\varkappa(k)\sim(0122)\).
In particular, the root \(R=\langle 243,6\rangle\) and its immediate descendant,
the fork \(Q=\langle 729,49\rangle=\langle 243,6\rangle(-\#1;1)\),
are of skeleton type.
The fork \(F=Q\) has a bifurcation, due to \textit{nuclear rank} \(\nu=2\).
The type of the terminal leaves is 
E.6, \(\varkappa(k)\sim(1122)\), for \(i=2\), and
E.14, \(\varkappa(k)\sim(3122)\), for \(i=3,4\).

\noindent
\(\bullet\)
\textbf{The \(U\)-tree:}
It is pruned from the complex type G.16, \(\varkappa(k)\sim(4231)\),
and the skeleton type of the mainline is c.21, \(\varkappa(k)\sim(0231)\).
In particular, the root \(R=\langle 243,8\rangle\) and its immediate descendant,
the fork \(U=\langle 729,54\rangle=\langle 243,8\rangle(-\#1;1)\),
possess the skeleton type.
The fork \(F=U\) has a bifurcation to different \textit{step sizes} \(s=1\) and \(s=2\).
The type of the terminal leaves is 
E.8, \(\varkappa(k)\sim(1231)\), for \(i=2\), and
E.9, \(\varkappa(k)\sim(2231)\), for \(i=3,4\).



\subsection{The subtree \({}_\ast\mathcal{T}(R)\) pruned from simple types}
\label{ss:ComplexTree}


\noindent
Remarks concerning Figure
\ref{fig:TreeStrucComplex}:
The figure is restricted to the root region
up to logarithmic order \(\mathrm{lo}=20\),
and to the \textit{pruned subtree} \({}_\ast\mathcal{T}(R)<\mathcal{T}(R)\)
of the complete descendant tree of the root \(R\),
where the simple types in section E of Scholz and Taussky 
\cite{SoTa1934}
are eliminated,
and only complex types
remain as finite or infinite branches,
connected by mainline vertices of skeleton type.
By \({}_\ast\mathcal{T}^2(R)\), we denote the pruned subtree of \({}_\ast\mathcal{T}(R)\)
with fixed coclass \(\mathrm{cc}=2\), a so-called \textit{coclass tree}.

The differences between the two possible realizations can be summarized as follows:

\noindent
\(\bullet\)
\textbf{The \(Q\)-tree:}
It is pruned from the simple types,
E.6, \(\varkappa(k)\sim(1122)\), and
E.14, \(\varkappa(k)\sim(3122)\).
The skeleton type of the mainline is c.18, \(\varkappa(k)\sim(0122)\).
In particular, the root \(R=\langle 243,6\rangle\) and its immediate descendant,
the fork \(Q=\langle 729,49\rangle=\langle 243,6\rangle(-\#1;1)\),
are of skeleton type.
The fork \(F=Q\) has a bifurcation, due to \textit{nuclear rank} \(\nu=2\).
The type of the infinite branches is the
complex hull type H.4, \(\varkappa(k)\sim(2122)\).

\noindent
\(\bullet\)
\textbf{The \(U\)-tree:}
It is pruned from the simple types,
E.8, \(\varkappa(k)\sim(1231)\), and
E.9, \(\varkappa(k)\sim(2231)\).
The skeleton type of the mainline is c.21, \(\varkappa(k)\sim(0231)\).
In particular, the root \(R=\langle 243,8\rangle\) and its immediate descendant,
the fork \(U=\langle 729,54\rangle=\langle 243,8\rangle(-\#1;1)\),
possess the skeleton type.
The fork \(F=U\) has a bifurcation to different \textit{step sizes} \(s=1\) and \(s=2\).
The type of the infinite branches is the
complex hull type G.16, \(\varkappa(k)\sim(4231)\)



{\tiny


\begin{figure}[ht]
\caption{Schur \(\sigma\)-groups with \textbf{complex} type in excited states on \({}_\ast\mathcal{T}(R)\)}
\label{fig:TreeStrucComplex}

\setlength{\unitlength}{0.75cm}
\begin{picture}(18,17)(-8,-16)


\put(3,0){\makebox(0,0)[lt]{Legend:}}
\put(3,-1){\circle{0.3}}
\put(3,-1){\circle*{0.1}}
\put(3.5,-1){\makebox(0,0)[lc]{\(\ldots\) fork \(F\) with bifurcation}}
\put(3,-1.5){\circle*{0.1}}
\put(3.5,-1.5){\makebox(0,0)[lc]{\(\ldots\) metabelian mainline group}}
\put(2.95,-2.05){\framebox(0.1,0.1){}}
\put(3.5,-2){\makebox(0,0)[lc]{\(\ldots\) non-metabelian group}}
\put(3,-2.5){\circle{0.2}}
\put(3.5,-2.5){\makebox(0,0)[lc]{\(\ldots\) metabelianization \(M=S/S^{\prime\prime}\)}}
\put(2.9,-3.1){\framebox(0.2,0.2){}}
\put(3.5,-3){\makebox(0,0)[lc]{\(\ldots\) Schur \(\sigma\)-group \(S\)}}
\put(3,-3.5){\makebox(0,0)[lc]{\(\nwarrow\)}}
\put(3.5,-3.5){\makebox(0,0)[lc]{\(\ldots\) projection \(S\to S/S^{\prime\prime}\)}}
\put(3,-4){\makebox(0,0)[lc]{GS}}
\put(3.5,-4){\makebox(0,0)[lc]{\(\ldots\) ground state}}
\put(3,-4.5){\makebox(0,0)[lc]{ES}}
\put(3.5,-4.5){\makebox(0,0)[lc]{\(\ldots\) excited state}}


\put(-5,0.5){\makebox(0,0)[cb]{Order}}
\put(-5,0){\line(0,-1){15}}
\multiput(-5.1,0)(0,-1){16}{\line(1,0){0.2}}
\put(-5.2,0){\makebox(0,0)[rc]{\(243\)}}
\put(-4.8,0){\makebox(0,0)[lc]{\(3^5\)}}
\put(-5.2,-1){\makebox(0,0)[rc]{\(729\)}}
\put(-4.8,-1){\makebox(0,0)[lc]{\(3^6\)}}
\put(-5.2,-2){\makebox(0,0)[rc]{\(2\,187\)}}
\put(-4.8,-2){\makebox(0,0)[lc]{\(3^7\)}}
\put(-5.2,-3){\makebox(0,0)[rc]{\(6\,561\)}}
\put(-4.8,-3){\makebox(0,0)[lc]{\(3^8\)}}
\put(-5.2,-4){\makebox(0,0)[rc]{\(19\,683\)}}
\put(-4.8,-4){\makebox(0,0)[lc]{\(3^9\)}}
\put(-5.2,-5){\makebox(0,0)[rc]{\(59\,049\)}}
\put(-4.8,-5){\makebox(0,0)[lc]{\(3^{10}\)}}
\put(-5.2,-6){\makebox(0,0)[rc]{\(177\,147\)}}
\put(-4.8,-6){\makebox(0,0)[lc]{\(3^{11}\)}}
\put(-5.2,-7){\makebox(0,0)[rc]{\(531\,441\)}}
\put(-4.8,-7){\makebox(0,0)[lc]{\(3^{12}\)}}
\put(-5.2,-8){\makebox(0,0)[rc]{\(1\,594\,323\)}}
\put(-4.8,-8){\makebox(0,0)[lc]{\(3^{13}\)}}
\put(-5.2,-9){\makebox(0,0)[rc]{\(4\,782\,969\)}}
\put(-4.8,-9){\makebox(0,0)[lc]{\(3^{14}\)}}
\put(-5.2,-10){\makebox(0,0)[rc]{\(14\,348\,907\)}}
\put(-4.8,-10){\makebox(0,0)[lc]{\(3^{15}\)}}
\put(-5.2,-11){\makebox(0,0)[rc]{\(43\,046\,721\)}}
\put(-4.8,-11){\makebox(0,0)[lc]{\(3^{16}\)}}
\put(-5.2,-12){\makebox(0,0)[rc]{\(129\,140\,163\)}}
\put(-4.8,-12){\makebox(0,0)[lc]{\(3^{17}\)}}
\put(-5.2,-13){\makebox(0,0)[rc]{\(387\,420\,489\)}}
\put(-4.8,-13){\makebox(0,0)[lc]{\(3^{18}\)}}
\put(-5.2,-14){\makebox(0,0)[rc]{\(1\,162\,261\,467\)}}
\put(-4.8,-14){\makebox(0,0)[lc]{\(3^{19}\)}}
\put(-5.2,-15){\makebox(0,0)[rc]{\(3\,486\,784\,401\)}}
\put(-4.8,-15){\makebox(0,0)[lc]{\(3^{20}\)}}
\put(-5,-15){\vector(0,-1){1}}

\put(-2,0.2){\makebox(0,0)[cb]{Root \(R=X_3^2\)}}
\put(-1.7,-1){\makebox(0,0)[lc]{Fork \(F=X_4^2\)}}
\put(-2,-1){\circle{0.3}}
\put(-2,-1){\line(-1,-1){1}}
\put(-3.3,-3){\makebox(0,0)[rc]{GS}}
\put(-3.3,-3.3){\makebox(0,0)[rt]{\(M_0^i\)}}
\put(-3,-2){\circle*{0.1}}
\put(-3,-2){\line(0,-1){1}}
\put(-3,-3){\circle{0.2}}

\multiput(-2,0)(0,-1){11}{\circle*{0.1}}

\put(-2,-1){\line(0,-1){2}}
\put(-1.8,-3){\makebox(0,0)[lc]{\(X_6^2\)}}
\put(-2,-3){\line(-1,-1){1}}
\put(-3.3,-5){\makebox(0,0)[rc]{ES \(1\)}}
\put(-3.3,-5.3){\makebox(0,0)[rt]{\(M_1^i\)}}
\put(-3,-4){\circle*{0.1}}
\put(-3,-4){\line(0,-1){1}}
\put(-3,-5){\circle{0.2}}

\put(-2,-3){\line(0,-1){2}}
\put(-1.8,-5){\makebox(0,0)[lc]{\(X_8^2\)}}
\put(-2,-5){\line(-1,-1){1}}
\put(-3.3,-7){\makebox(0,0)[rc]{ES \(2\)}}
\put(-3.3,-7.3){\makebox(0,0)[rt]{\(M_2^i\)}}
\put(-3,-6){\circle*{0.1}}
\put(-3,-6){\line(0,-1){1}}
\put(-3,-7){\circle{0.2}}

\put(-2,-5){\line(0,-1){2}}
\put(-1.8,-7){\makebox(0,0)[lc]{\(X_{10}^2\)}}
\put(-2,-7){\line(-1,-1){1}}
\put(-3.3,-9){\makebox(0,0)[rc]{ES \(3\)}}
\put(-3.3,-9.3){\makebox(0,0)[rt]{\(M_3^i\)}}
\put(-3,-8){\circle*{0.1}}
\put(-3,-8){\line(0,-1){1}}
\put(-3,-9){\circle{0.2}}

\put(-2,-7){\line(0,-1){2}}
\put(-1.8,-9){\makebox(0,0)[lc]{\(X_{12}^2\)}}
\put(-2,-9){\line(-1,-1){1}}
\put(-3.3,-11){\makebox(0,0)[rc]{ES \(4\)}}
\put(-3.3,-11.3){\makebox(0,0)[rt]{\(M_4^i\)}}
\put(-3,-10){\circle*{0.1}}
\put(-3,-10){\line(0,-1){1}}
\put(-3,-11){\circle{0.2}}

\put(-2,-9){\vector(0,-1){3}}
\put(-2,-12.3){\makebox(0,0)[cc]{\({}_\ast\mathcal{T}^2(R)\)}}
\put(-2,-12.7){\makebox(0,0)[cc]{Infinite}}
\put(-2,-13.1){\makebox(0,0)[cc]{mainline}}

\put(-2,0){\line(0,-1){1}}
\put(-2,-1){\line(1,-1){2}}
\put(0.2,-3){\makebox(0,0)[lc]{\(X_5^3\)}}
\put(0.2,-4){\makebox(0,0)[lc]{\(X_6^3\)}}
\put(-2,-1){\line(1,-2){1}}
\multiput(-1.05,-3.05)(0,-3){4}{\framebox(0.1,0.1){}}
\multiput(-1.05,-4.05)(0,-3){4}{\framebox(0.1,0.1){}}
\put(-1,-3){\line(0,-1){1}}
\put(-1,-4){\line(1,-2){1}}
\put(-0.1,-6.1){\framebox(0.2,0.2){}}
\put(0,-6.4){\makebox(0,0)[ct]{\(S_{0,0}^i\)}}
\put(-1,-4){\line(0,-1){2}}
\put(-1,-6){\line(0,-1){1}}
\put(-1,-7){\line(1,-2){1}}
\put(-0.1,-9.1){\framebox(0.2,0.2){}}
\put(0,-9.4){\makebox(0,0)[ct]{\(S_{0,1}^i\)}}
\put(-1,-7){\line(0,-1){2}}
\put(-1,-9){\line(0,-1){1}}
\put(-1,-10){\line(1,-2){1}}
\put(-0.1,-12.1){\framebox(0.2,0.2){}}
\put(0,-12.4){\makebox(0,0)[ct]{\(S_{0,2}^i\)}}
\put(-1,-10){\line(0,-1){2}}
\put(-1,-12){\line(0,-1){1}}
\put(-1,-13){\line(1,-2){1}}
\put(-0.1,-15.1){\framebox(0.2,0.2){}}
\put(0,-15.4){\makebox(0,0)[ct]{\(S_{0,3}^i\)}}
\put(-1,-13){\vector(0,-1){2}}
\put(-1,-15.1){\makebox(0,0)[ct]{GS}}

\put(0,-3){\line(0,-1){1}}
\put(0,-4){\line(1,-1){2}}
\put(2.2,-6){\makebox(0,0)[lc]{\(X_7^4\)}}
\put(2.2,-7){\makebox(0,0)[lc]{\(X_8^4\)}}
\put(0,-4){\line(1,-2){1}}
\multiput(0.95,-6.05)(0,-3){3}{\framebox(0.1,0.1){}}
\multiput(0.95,-7.05)(0,-3){3}{\framebox(0.1,0.1){}}
\put(1,-6){\line(0,-1){1}}
\put(1,-7){\line(1,-2){1}}
\put(1.9,-9.1){\framebox(0.2,0.2){}}
\put(2,-9.4){\makebox(0,0)[ct]{\(S_{1,0}^i\)}}
\put(1,-7){\line(0,-1){2}}
\put(1,-9){\line(0,-1){1}}
\put(1,-10){\line(1,-2){1}}
\put(1.9,-12.1){\framebox(0.2,0.2){}}
\put(2,-12.4){\makebox(0,0)[ct]{\(S_{1,1}^i\)}}
\put(1,-10){\line(0,-1){2}}
\put(1,-12){\line(0,-1){1}}
\put(1,-13){\line(1,-2){1}}
\put(1.9,-15.1){\framebox(0.2,0.2){}}
\put(2,-15.4){\makebox(0,0)[ct]{\(S_{1,2}^i\)}}
\put(1,-13){\vector(0,-1){2}}
\put(1,-15.1){\makebox(0,0)[ct]{ES1}}

\put(2,-15.9){\makebox(0,0)[ct]{Infinite branch for each state}}

\put(2,-6){\line(0,-1){1}}
\put(2,-7){\line(1,-1){2}}
\put(4.2,-9){\makebox(0,0)[lc]{\(X_9^5\)}}
\put(4.2,-10){\makebox(0,0)[lc]{\(X_{10}^5\)}}
\put(2,-7){\line(1,-2){1}}
\multiput(2.95,-9.05)(0,-3){2}{\framebox(0.1,0.1){}}
\multiput(2.95,-10.05)(0,-3){2}{\framebox(0.1,0.1){}}
\put(3,-10){\line(0,-1){2}}
\put(3,-9){\line(0,-1){1}}
\put(3,-10){\line(1,-2){1}}
\put(3.9,-12.1){\framebox(0.2,0.2){}}
\put(4,-12.4){\makebox(0,0)[ct]{\(S_{2,0}^i\)}}
\put(3,-12){\line(0,-1){1}}
\put(3,-13){\line(1,-2){1}}
\put(3.9,-15.1){\framebox(0.2,0.2){}}
\put(4,-15.4){\makebox(0,0)[ct]{\(S_{2,1}^i\)}}
\put(3,-13){\vector(0,-1){2}}
\put(3,-15.1){\makebox(0,0)[ct]{ES2}}

\put(4,-9){\line(0,-1){1}}
\put(4,-10){\line(1,-1){2}}
\put(6.2,-12){\makebox(0,0)[lc]{\(X_{11}^6\)}}
\put(6.2,-13){\makebox(0,0)[lc]{\(X_{12}^6\)}}
\put(4,-10){\line(1,-2){1}}
\put(4.95,-12.05){\framebox(0.1,0.1){}}
\put(4.95,-13.05){\framebox(0.1,0.1){}}
\put(5,-12){\line(0,-1){1}}
\put(5,-13){\line(1,-2){1}}
\put(5.9,-15.1){\framebox(0.2,0.2){}}
\put(6,-15.4){\makebox(0,0)[ct]{\(S_{3,0}^i\)}}
\put(5,-13){\vector(0,-1){2}}
\put(5,-15.1){\makebox(0,0)[ct]{ES3}}

\put(6,-12){\line(0,-1){1}}
\put(6,-13){\vector(1,-1){2}}
\put(9,-15.5){\makebox(0,0)[cc]{Infinite}}
\put(9,-15.9){\makebox(0,0)[cc]{maintrunk}}
\put(6,-13){\line(1,-2){1}}
\put(6.95,-15.05){\framebox(0.1,0.1){}}

\put(0,-6){\vector(-1,1){2.9}}
\put(2,-8.9){\vector(-4,3){5.0}}
\put(3.9,-12.1){\vector(-4,3){6.8}}
\put(5.8,-15.2){\vector(-4,3){8.4}}

\end{picture}

\end{figure}

}


{\tiny


\begin{figure}[ht]
\caption{Topologies of \(3\)-tower groups with \textbf{complex} type H.4 in the GS on \({}_\ast\mathcal{T}(R)\)}
\label{fig:ComplexTopologies}

\setlength{\unitlength}{0.75cm}
\begin{picture}(18,17)(-8,-16)


\put(3,0){\makebox(0,0)[lt]{Legend:}}
\put(3,-1){\circle{0.3}}
\put(3,-1){\circle*{0.1}}
\put(3.5,-1){\makebox(0,0)[lc]{\(\ldots\) fork \(F\) with bifurcation}}
\put(3,-1.5){\circle*{0.1}}
\put(3.5,-1.5){\makebox(0,0)[lc]{\(\ldots\) metabelian mainline group}}
\put(2.95,-2.05){\framebox(0.1,0.1){}}
\put(3.5,-2){\makebox(0,0)[lc]{\(\ldots\) non-metabelian group}}
\put(3,-2.5){\circle{0.2}}
\put(3.5,-2.5){\makebox(0,0)[lc]{\(\ldots\) metabelianization \(M=S/S^{\prime\prime}\)}}
\put(2.9,-3.1){\framebox(0.2,0.2){}}
\put(3.5,-3){\makebox(0,0)[lc]{\(\ldots\) Schur \(\sigma\)-group \(S\)}}
\put(3,-3.5){\makebox(0,0)[lc]{\(\nwarrow\)}}
\put(3.5,-3.5){\makebox(0,0)[lc]{\(\ldots\) projection \(S\to S/S^{\prime\prime}\)}}
\put(3,-4){\makebox(0,0)[lc]{GS}}
\put(3.5,-4){\makebox(0,0)[lc]{\(\ldots\) ground state}}


\put(-5,0.5){\makebox(0,0)[cb]{Order}}
\put(-5,0){\line(0,-1){14}}
\multiput(-5.1,0)(0,-2){8}{\line(1,0){0.2}}
\put(-5.2,0){\makebox(0,0)[rc]{\(243\)}}
\put(-4.8,0){\makebox(0,0)[lc]{\(3^5\)}}
\put(-5.2,-2){\makebox(0,0)[rc]{\(729\)}}
\put(-4.8,-2){\makebox(0,0)[lc]{\(3^6\)}}
\put(-5.2,-4){\makebox(0,0)[rc]{\(2\,187\)}}
\put(-4.8,-4){\makebox(0,0)[lc]{\(3^7\)}}
\put(-5.2,-6){\makebox(0,0)[rc]{\(6\,561\)}}
\put(-4.8,-6){\makebox(0,0)[lc]{\(3^8\)}}
\put(-5.2,-8){\makebox(0,0)[rc]{\(19\,683\)}}
\put(-4.8,-8){\makebox(0,0)[lc]{\(3^9\)}}
\put(-5.2,-10){\makebox(0,0)[rc]{\(59\,049\)}}
\put(-4.8,-10){\makebox(0,0)[lc]{\(3^{10}\)}}
\put(-5.2,-12){\makebox(0,0)[rc]{\(177\,147\)}}
\put(-4.8,-12){\makebox(0,0)[lc]{\(3^{11}\)}}
\put(-5.2,-14){\makebox(0,0)[rc]{\(531\,441\)}}
\put(-4.8,-14){\makebox(0,0)[lc]{\(3^{12}\)}}
\put(-5,-14){\vector(0,-1){1}}

\put(-2,0){\line(0,-1){14}}
\multiput(-2,0)(0,-2){8}{\circle*{0.1}}

\put(-2,0.2){\makebox(0,0)[cb]{Root \(R=X_3^2\)}}
\put(-1.7,-2){\makebox(0,0)[lc]{Fork \(F=X_4^2\)}}
\put(-1.7,-4){\makebox(0,0)[lc]{\(X_5^2\)}}
\put(-1.7,-6){\makebox(0,0)[lc]{\(X_6^2\)}}
\put(-2,-2){\circle{0.3}}
\put(-2,-2){\line(-1,-2){1}}
\put(-3.3,-4){\makebox(0,0)[rc]{\(\pi(M_0^i)\)}}
\put(-3,-4){\circle*{0.1}}
\put(-3,-4){\line(0,-1){2}}
\put(-3.3,-5.5){\makebox(0,0)[rc]{GS}}
\put(-3.3,-6){\makebox(0,0)[rc]{\(M_0^i\)}}
\put(-3,-6){\circle{0.2}}
\put(-3,-6){\line(-1,-4){0.5}}
\put(-3,-6){\line(0,-1){2}}
\put(-3,-6){\line(1,-4){0.5}}
\multiput(-3.55,-8.05)(0.5,0){3}{\framebox(0.1,0.1){}}
\put(-3.6,-8.3){\makebox(0,0)[ct]{\(R_{01}^i\)}}
\put(-3,-8.3){\makebox(0,0)[ct]{\(R_{02}^i\)}}
\put(-2.4,-8.3){\makebox(0,0)[ct]{\(R_{03}^i\)}}

\put(-2,-14){\vector(0,-1){1}}
\put(-2,-15.3){\makebox(0,0)[cc]{\({}_\ast\mathcal{T}^2(R)\)}}
\put(-2,-15.8){\makebox(0,0)[cc]{Mainline}}

\put(2.2,-6){\makebox(0,0)[lc]{\(F-\#2;i\), with \(i=5\) or \(i=6\) }}
\put(-2,-2){\line(1,-1){4}}
\put(2,-6){\line(0,-1){2}}
\put(2,-8){\line(1,-4){1}}
\put(3.3,-12){\makebox(0,0)[lc]{\(S_{00}^i\)}}
\put(2.9,-12.1){\framebox(0.2,0.2){}}

\put(2,-8){\line(-1,-4){0.5}}
\put(1.4,-9.8){\makebox(0,0)[rb]{\(U_{0j}^i\)}}
\multiput(1.2,-10.05)(0.25,0){4}{\framebox(0.1,0.1){}}
\put(2.3,-10){\makebox(0,0)[lc]{\(j=1,..,4\)}}
\put(1.5,-10){\line(-1,-4){0.5}}

\put(0.9,-12){\makebox(0,0)[rc]{\(V_{0}^i\)}}
\put(1,-12){\line(-1,-4){0.5}}
\put(1,-12){\line(0,-1){2}}
\put(1,-12){\line(1,-4){0.5}}
\multiput(0.45,-14.05)(0.5,0){3}{\framebox(0.1,0.1){}}
\put(0.4,-14.3){\makebox(0,0)[ct]{\(W_{01}^i\)}}
\put(1,-14.3){\makebox(0,0)[ct]{\(W_{02}^i\)}}
\put(1.6,-14.3){\makebox(0,0)[ct]{\(W_{03}^i\)}}

\put(3,-12){\vector(-1,1){5.8}}

\put(2.2,-8){\makebox(0,0)[lc]{\(\pi(S_{00}^i)\)}}
\put(2,-8){\line(1,-1){4}}
\put(6,-12){\line(0,-1){2}}
\put(6.2,-14){\makebox(0,0)[lc]{\(\pi(S_{01}^i)\)}}
\put(6,-14){\vector(1,-1){1}}
\put(7,-15.2){\makebox(0,0)[cc]{Infinite path}}
\put(7,-15.5){\makebox(0,0)[cc]{with periodic bifurcations}}

\end{picture}

\end{figure}

}


\subsection{Details for the complex ground state}
\label{ss:ComplexTreeDetails}


\noindent
Remarks concerning Figure
\ref{fig:ComplexTopologies}:
A detailed tree diagram for the ground state (GS) of \textit{complex types} is drawn in Figure
\ref{fig:ComplexTopologies}.
It is pruned from simple types, and restricted to skeleton types and complex types.
Four different scenarios can be realized by the same diagram:
the two trees with fork \(F\) equal to \(Q=\langle 729,49\rangle\) or \(U=\langle 729,54\rangle\),
and each of them with two possible selections of the metabelianization \(M_0^i\),
sharing the same complex type in the GS.

\noindent
\(\bullet\)
\textbf{The \(Q\)-tree:}
Skeleton type is c.18, \((0122)\),
which forms the infinite mainline of the tree \(\mathcal{T}^2(R)\) with fixed coclass \(2\),
starting at the root \(R=\langle 243,6\rangle\),
passing the fork \(F=Q=\langle 729,49\rangle\) with nuclear rank \(\nu=2\) and bifurcation,
on the one hand with permanent step size \(s=1\) to
\(X_5^2=\langle 2187,285\rangle\), \(X_6^2=\langle 6561,2024\rangle\), etc.
\cite[Fig. 3, p. 151]{Ma2017},
and on the other hand with alternating step sizes \(s=2\) and \(s=1\)
on an infinite path with complex type H.4, \((2122)\), and with periodic bifurcations,
giving rise to unboundedly increasing coclass.
We are interested in two siblings of \(\langle 2187,285\rangle\) rather than in the mainline.
They lead to two arithmetically indistinguishable candidates for the metabelianization
\(M=\mathrm{Gal}(\mathrm{F}_3^2(K)/K)\) of the \(3\)-class field tower group.

\begin{enumerate}
\item
Either via the sibling \(\langle 2187,286\rangle\),
which is forbidden as \(M\) due to the lack of a proper \(\sigma\)-automorphism,
to \(M=M_0^5=\langle 6561,2030\rangle\).
\item
Or via the sibling \(\langle 2187,287\rangle\)
which is forbidden as \(M\) due to the lack of a proper \(\sigma\)-automorphism,
to \(M=M_0^6=\langle 6561,2035\rangle\).
\end{enumerate}

\noindent
In both cases, several configurations of \(3\)-class field tower groups \(S\) are possible.
For real quadratic base fields \(K\),
three terminal immediate non-metabelian descendants
\(R_{0j}^i=M_0^i-\#1;j\), \(j=1,2,3\),
provide the option of a Schur\(+1\) \(\sigma\)-group \(S\) with child topology.

\noindent
Similarly, we are interested in two siblings,
\(F-\#2;5=\langle 6561,614\rangle\) and \(F-\#2;6=\langle 6561,615\rangle\),
of \(F-\#2;1=\langle 6561,613\rangle\)
rather than in the skeleton type itself.


\renewcommand{\arraystretch}{1.1}

\begin{table}[ht]
\caption{Second ATI of \(R_{0j}^i\) for the \textbf{Ground State} of type H.4}
\label{tbl:SchurPlusOneH4}
\begin{center}
\begin{tabular}{|r|l||l|l|l|}
\hline
    &                    &                & \multicolumn{2}{|c|}{Stabilization}                     \\
 lo & id                 & Polarization   & Singular               & Regular                        \\
\hline
  8 & \(2030\vert 2035\) & \(32,(311)^3\) & \(111;(11)^9,(111)^3\) & \(\lbrack 21;(21)^3\rbrack^2\) \\
\hline
  9 & \(M_0^i-\#1;1\vert 2\) & \(32,(411)^3\) & \(111;(11)^9,(111)^3\) & \(\lbrack 21;(21)^3\rbrack^2\) \\
  9 & \(M_0^i-\#1;3\)        & \(32,(311)^3\) & \(111;(11)^9,(111)^3\) & \(\lbrack 21;(21)^3\rbrack^2\) \\
\hline
\end{tabular}
\end{center}
\end{table}


\noindent
\(\bullet\)
\textbf{The \(U\)-tree:}
Skeleton type is c.21, \((0231)\),
which forms the infinite mainline of the tree \(\mathcal{T}^2(R)\) with fixed coclass \(2\),
starting at the root \(R=\langle 243,8\rangle\),
passing the fork \(F=Q=\langle 729,54\rangle\) with nuclear rank \(\nu=2\) and  bifurcation,
on the one hand with permanent step size \(s=1\) to
\(X_5^2=\langle 2187,303\rangle\), \(X_6^2=\langle 6561,2050\rangle\), etc.
\cite[Fig. 4, p. 152]{Ma2017},
and on the other hand with alternating step sizes \(s=2\) and \(s=1\)
on an infinite path with complex type G.16, \((4231)\), and with periodic bifurcations,
causing unboundedly increasing coclass.
We are interested in two siblings of \(\langle 2187,303\rangle\) rather than in the mainline.
They lead to two arithmetically indistinguishable candidates for the metabelianization
\(M=\mathrm{Gal}(\mathrm{F}_3^2(K)/K)\) of the \(3\)-class field tower group.

\begin{enumerate}
\item
Either via the sibling \(\langle 2187,301\rangle\),
which is forbidden as \(M\) due to the lack of a proper \(\sigma\)-automorphism,
to \(M=M_0^5=\langle 6561,2048\rangle\).
\item
Or via the sibling \(\langle 2187,305\rangle\)
which is forbidden as \(M\) due to the lack of a proper \(\sigma\)-automorphism,
to \(M=M_0^6=\langle 6561,2058\rangle\).
\end{enumerate}

\noindent
In both cases, several configurations of \(3\)-class field tower groups \(S\) are possible.
For real quadratic base fields \(K\),
two terminal immediate non-metabelian descendants
\(R_{0j}^i=M_0^i-\#1;j\), \(j=1,2\),
provide the option of a Schur\(+1\) \(\sigma\)-group \(S\) with child topology.

\noindent
Similarly, we are interested in two siblings,
\(F-\#2;5=\langle 6561,619\rangle\) and \(F-\#2;6=\langle 6561,623\rangle\),
of \(F-\#2;1=\langle 6561,621\rangle\)
rather than in the skeleton type itself.


\renewcommand{\arraystretch}{1.1}

\begin{table}[ht]
\caption{Second ATI of \(R_{0j}^i\) for the \textbf{Ground State} of type G.16}
\label{tbl:SchurPlusOneG16}
\begin{center}
\begin{tabular}{|r|l||l|l|}
\hline
 lo & id                 & Polarization   & Stabilization                  \\
\hline
  8 & \(2048\vert 2058\) & \(32,(311)^3\) & \(\lbrack 21;(21)^3\rbrack^3\) \\
\hline
  9 & \(M_0^i-\#1;1\)        & \(32,(411)^3\) & \(\lbrack 21;(21)^3\rbrack^3\) \\
  9 & \(M_0^i-\#1;2\)        & \(32,(311)^3\) & \(\lbrack 21;(21)^3\rbrack^3\) \\
\hline
\end{tabular}
\end{center}
\end{table}



\section{Computational techniques}
\label{s:Computations}

\subsection{The fundamental database of Bush}
\label{ss:Bush}

\noindent
On 11 July 2015, M. R. Bush kindly
shared an extensive database with us, in private communication.
This information was the computational background for the article
\cite{BBH2021},
and extended the range of our own tables in
\cite{Ma2012a, Ma2014}
by a factor of \(100\).
It was created in several months of CPU time on a cluster of parallel supercomputers,
and so it would have been impossible for us to reconstruct this numerical data.
The associated \texttt{README\_real.txt} file explains the contents of the database:

\bigskip

\lq\lq The Magma file \texttt{ipad\_freq\_real.m} contains two lists called \texttt{disclist} and \texttt{ipadlist}.
Each list contains \(185\) entries. 

Each entry in \texttt{ipadlist} is the IPAD, for \(p = 3\), of a \textit{real quadratic field} \(K\)
with \(3\)-class group of rank \(2\) and discriminant \(d_K < 10^9\).
The IPAD of such a field \(K\) is a \(5\)-tuple in which
the first entry is the \(3\)-class group of \(K\) and
the remaining four entries are the \(3\)-class groups of the four unramified cyclic cubic extensions of \(K\).

The \(i\)-th entry in \texttt{disclist} is the complete list of discriminants \(d_K\) of real quadratic fields
with \(3\)-class group of rank \(2\), \(d_K < 10^9\)
and whose IPAD appears as the \(i\)-th entry in \texttt{ipadlist}.

All class groups have been computed using Magma \texttt{V2.19-5}.

Note:
Taken together,
the discriminants appearing in \texttt{disclist} form a complete list of discriminants 
for real quadratic fields \(K\) with \(3\)-class group of rank \(2\) and \(d_K < 10^9\).
There are \(481\,756\) such fields.

Last updated: Sat, 11 July 2015.\rq\rq


\renewcommand{\arraystretch}{1.1}

{\small

\begin{table}[ht]
\caption{List of IPADs with first component \([ 3, 3 ]\) for \(d_K<10^9\)}
\label{tbl:IPAD}
\begin{center}
\begin{tabular}{|r|r|r|r|l|c|}
\hline
Rel & Abs & NumDisc & MinDisc            & IPAD (last four components)                                   & State \\
\hline
  1 &    1 & 208236 &        \(32\,009\) &  ( [ 3, 3 ], [ 3, 3 ], [ 3, 3 ], [ 3, 9 ] )                         & \\
  2 &    2 & 122955 &       \(142\,097\) &  ( [ 3, 3 ], [ 3, 3 ], [ 3, 3 ], [ 3, 3, 3 ] )                      & \\
  3 &    4 &  26678 &        \(62\,501\) &  ( [ 3, 3 ], [ 3, 3 ], [ 3, 3 ], [ 9, 9 ] )                         & \\
  4 &    5 &  13712 &       \(422\,573\) &  ( [ 3, 3, 3 ], [ 3, 9 ], [ 3, 9 ], [ 3, 9 ] )                      & \\
  5 &    6 &  11780 &       \(494\,236\) &  ( [ 3, 3 ], [ 3, 3 ], [ 3, 3 ], [ 9, 27 ] )                        & \\
  6 &    9 &   6691 &       \(631\,769\) &  ( [ 3, 3, 3 ], [ 3, 3, 3 ], [ 3, 9 ], [ 3, 9 ] )                   & \\
  7 &   10 &   6583 &       \(957\,013\) &  ( [ 3, 3, 3 ], [ 3, 3, 3 ], [ 3, 3, 3 ], [ 3, 9 ] )                & \\
  8 &   11 &   4377 &       \(540\,365\) &  ( [ 3, 9 ], [ 3, 9 ], [ 3, 9 ], [ 9, 9 ] )                         & \\
  9 &   12 &   4318 &       \(534\,824\) &  ( [ 3, 3, 3 ], [ 3, 9 ], [ 3, 9 ], [ 9, 9 ] )                      & \\
 10 & \textbf{16} & 1958 &      \(342\,664\) &  \(\mathbf{( [ 3, 9 ], [ 3, 9 ], [ 3, 9 ], [ 9, 27 ] )}\)       & GS \\
 11 & \textbf{17} & 1880 &   \(1\,162\,949\) &  \(\mathbf{( [ 3, 3, 3 ], [ 3, 9 ], [ 3, 9 ], [ 9, 27 ] )}\)    & GS \\
 12 &   18 &   1636 &       \(214\,712\) &  ( [ 3, 9 ], [ 3, 9 ], [ 3, 9 ], [ 3, 9 ] )                         & \\
 13 &   19 &   1410 &       \(710\,652\) &  ( [ 3, 3, 3 ], [ 3, 3, 3 ], [ 9, 9 ], [ 9, 9 ] )                   & \\
 14 &   21 &   1251 &    \(1\,535\,117\) &  ( [ 3, 3, 3 ], [ 3, 3, 3 ], [ 9, 9 ], [ 9, 27 ] )                  & \\
 15 &   25 &    921 &    \(2\,905\,160\) &  ( [ 3, 3 ], [ 3, 3 ], [ 3, 3 ], [ 27, 27 ] )                       & \\
 16 &   32 &    391 &   \(10\,200\,108\) &  ( [ 3, 3 ], [ 3, 3 ], [ 3, 3 ], [ 27, 81 ] )                       & \\
 17 &   38 &    234 &    \(8\,321\,505\) &  ( [ 3, 3, 3 ], [ 3, 3, 3 ], [ 9, 27 ], [ 9, 27 ] )                 & \\
 18 &   43 &    146 &    \(1\,001\,957\) &  ( [ 3, 9 ], [ 3, 9 ], [ 3, 9 ], [ 27, 27 ] )                       & \\
 19 &   45 &    138 &   \(13\,714\,789\) &  ( [ 3, 3, 3 ], [ 3, 9 ], [ 3, 9 ], [ 27, 27 ] )                    & \\
 20 &   48 &    101 &   \(17\,802\,872\) &  ( [ 3, 3, 3 ], [ 3, 3, 3 ], [ 9, 9 ], [ 27, 27 ] )                 & \\
 21 & \textbf{55} &   81 &  \(26\,889\,637\) &  \(\mathbf{( [ 3, 9 ], [ 3, 9 ], [ 3, 9 ], [ 27, 81 ] )}\)      & ES1 \\
 22 & \textbf{59} &   66 &  \(70\,539\,596\) &  \(\mathbf{( [ 3, 3, 3 ], [ 3, 9 ], [ 3, 9 ], [ 27, 81 ] )}\)   & ES1 \\
 23 &   66 &     40 &    \(8\,491\,713\) &  ( [ 3, 3, 3 ], [ 3, 3, 3 ], [ 9, 27 ], [ 27, 27 ] )                & \\
 24 &   70 &     31 &   \(27\,970\,737\) &  ( [ 3, 3, 3 ], [ 3, 3, 3 ], [ 9, 9 ], [ 27, 81 ] )                 & \\
 25 &   74 &     25 &   \(40\,980\,808\) &  ( [ 3, 3 ], [ 3, 3 ], [ 3, 3 ], [ 81, 81 ] )                       & \\
 26 &   76 &     23 &    \(8\,127\,208\) &  ( [ 3, 3, 3 ], [ 3, 3, 3 ], [ 9, 27 ], [ 27, 81 ] )                & \\
 27 &   86 &     12 &   \(37\,304\,664\) &  ( [ 3, 3 ], [ 3, 3 ], [ 3, 3 ], [ 81, 243 ] )                      & \\
 28 &  114 &      5 &  \(174\,458\,681\) &  ( [ 3, 3, 3 ], [ 3, 9 ], [ 3, 9 ], [ 81, 81 ] )                    & \\
 29 &  115 &      5 &  \(116\,043\,324\) &  ( [ 3, 9 ], [ 3, 9 ], [ 3, 9 ], [ 81, 81 ] )                       & \\
 30 &  116 &      4 &  \(131\,279\,821\) &  ( [ 3, 3, 3 ], [ 3, 3, 3 ], [ 9, 9 ], [ 81, 243 ] )                & \\
 31 &  129 &      3 &  \(343\,438\,961\) &  ( [ 3, 3, 3 ], [ 3, 3, 3 ], [ 9, 9 ], [ 81, 81 ] )                 & \\
 32 &\textbf{130} &    3 & \(124\,813\,084\) &  \(\mathbf{( [ 3, 9 ], [ 3, 9 ], [ 3, 9 ], [ 81, 243 ] )}\)     & ES2 \\
 33 &  151 &      2 &  \(180\,527\,768\) &  ( [ 3, 3, 3 ], [ 3, 3, 3 ], [ 27, 27 ], [ 27, 27 ] )               & \\
 34 &\textbf{165} &    1 & \(336\,698\,284\) &  \(\mathbf{( [ 3, 3, 3 ], [ 3, 9 ], [ 3, 9 ], [ 81, 243 ] )}\)  & ES2 \\
 35 &\textbf{170} &    1 & \(705\,576\,037\) &  \(\mathbf{( [ 3, 3, 3 ], [ 3, 9 ], [ 3, 9 ], [ 243, 729 ] )}\) & ES3 \\
\hline
\end{tabular}
\end{center}
\end{table}

}

\bigskip
\noindent
Using a Magma program script \texttt{SiftRealIPADs.m},
we retrieved the following statistics of IPADs in Table
\ref{tbl:IPAD},
for the upper bound \(d_K < 10^9\),
restricted to the \textit{elementary bicyclic} first IPAD component \([ 3, 3 ]\):
\lq\lq Rel\rq\rq\ denotes the relative counter of the \(35\) sifted IPADs,
as opposed to the absolute counter
\lq\lq Abs\rq\rq\
among all \(185\) IPADs.
\lq\lq NumDisc\rq\rq\
is the number of discriminants with given IPAD.
\lq\lq MinDisc\rq\rq\
is the minimal discriminant with assigned IPAD.
Since the first component \([ 3, 3 ]\) is fixed,
only the last four components of the IPAD are given.
\textbf{Boldface} IPADs are crucial. \\
Number of all IPADs: \(185\),
Number of sifted IPADs: \(35\), \\
Total number of discriminants: \(415\,698\).

\bigskip
\noindent
In order to identify the relevant IPADs
of vertices on the Q-tree and U-tree,
corresponding to the six transfer kernel types
under investigation (Formula
\eqref{eqn:SimpleTKTComplexTKT}),
we define:
Let \(\varepsilon\) be the \textit{number of
elementary tricyclic components} \([ 3, 3, 3 ]\) of an IPAD.
By the \lq\lq main theorem on class and coclass from IPAD\rq\rq\ 
and its corollary in
\cite[\S\ 4, Thm. 2 and Cor. 1]{AoMa2025},
it is well known that
\begin{itemize}
\item
an IPAD with (at least) three elementary bicyclic components \([ 3, 3 ]\) is due to a group of maximal class, \(\mathrm{cc}=1\)
(Abs = 1,2,4,6,25,32,74,86),
\item
an IPAD with \(\varepsilon=3\) is due to the sporadic group \(\langle 243,4\rangle\) and its descendants (Abs = 10),
\item
an IPAD with \(\varepsilon=2\) is either due to the sporadic Schur \(\sigma\)-group \(\langle 243,7\rangle\)
(Abs = 9)
or to the group \(\langle 243,3\rangle\) and its descendants,
in particular to all groups with \(\mathrm{cc}\ge 3\)
(Abs = 19,21,38,48,66,70,76,116,129,151),
\item
among the remaining IPADs with \(\varepsilon=1\), 
Abs = 5 is due to the sporadic Schur \(\sigma\)-group \(\langle 243,5\rangle\),
the polarization of Abs = 12,45,114 is \textbf{homo}cyclic,
and only \textbf{Abs = 17,59,165,170} belong to vertices with \textbf{hetero}cyclic polarization on the Q-tree,
\item
among the remaining IPADs with \(\varepsilon=0\), 
Abs = 18 is due to the sporadic group \(\langle 243,9\rangle\) and its descendants,
the polarization of Abs = 11,43,115 is \textbf{homo}cyclic,
and only \textbf{Abs = 16,55,130} belong to vertices with \textbf{hetero}cyclic polarization on the U-tree.
\end{itemize}


\subsection{The principalization type}
\label{ss:Capitulation}

\noindent
By means of \texttt{GetDiscsForFixedIPAD.m}, a Magma program script,
we retrieved the discriminants in ascending order for the IPADs
\[\textbf{Abs = 16,17,55,59,130,165,170}.\]
The TKT was computed with \texttt{RQGroundDisc.m}, the ATI2 and Length with \texttt{SecondATIExtended.m}.

\noindent
In Table
\ref{tbl:IPAD16},
the initial 15 discriminants with IPAD Abs = 16
\(( [ 3, 9 ], [ 3, 9 ], [ 3, 9 ], [ 9, 27 ] )\)
are classified
according to their transfer kernel type
TKT E.8 or E.9 or G.16, in the ground state GS, \(n = 0\).
The corresponding metabelianizations \(M\) are vertices on the U-tree.


\renewcommand{\arraystretch}{1.1}

{\small

\begin{table}[ht]
\caption{List of discriminants associated with IPAD Abs = 16}
\label{tbl:IPAD16}
\begin{center}
\begin{tabular}{|r|r|l|l|c|}
\hline
 No &              Disc & Factors                          & TKT  & Length \\
\hline
  1 &      \(\mathbf{342\,664}\) & \(2^3\cdot 7\cdot 29\cdot 211\) & E.9  & \textbf{3} \\
  2 &   \(1\,452\,185\) & \(5\cdot 7\cdot 41491\)                  & E.9  & 3 \\
  3 &   \(1\,787\,945\) & \(5\cdot 353\cdot 1013\)                 & E.9  & 3 \\
  4 &   \(\mathbf{4\,760\,877}\) & \(3\cdot 11\cdot 89\cdot 1621\) & E.9  & \textbf{2} \\
  5 &   \(4\,861\,720\) & \(2^3\cdot 5\cdot 19\cdot 6397\)         & E.9  & 3 \\
  6 &   \(5\,976\,988\) & \(2^2\cdot 1494247\)                     & E.9  & 3 \\
  7 &   \(\mathbf{6\,098\,360}\) & \(2^3\cdot 5\cdot 152459\)      & E.8  & \textbf{3} \\
  8 &   \(6\,652\,929\) & \(3\cdot 2217643\)                       & E.9  & 2 \\
  9 &   \(7\,100\,889\) & \(3\cdot 19\cdot 124577\)                & E.8  & 3 \\
 10 &   \(7\,358\,937\) & \(3\cdot 71\cdot 34549\)                 & E.9  & 2 \\
 11 &   \(8\,079\,101\) & prime                                    & E.9  & 3 \\
 12 &   \(\mathbf{8\,632\,716}\) & \(2^2\cdot 3\cdot 719393\)      & E.8  & \textbf{2} \\
 13 &   \(\mathbf{8\,711\,453}\) & \(947\cdot 9199\)               & G.16 & \textbf{3} \\
 14 &   \(9\,129\,480\) & \(2^3\cdot 3\cdot 5\cdot 76079\)         & E.9  & 2 \\
 15 &   \(\mathbf{9\,448\,265}\) & \(5\cdot 1889653\)              & G.16 & \textbf{2 or 3} \\
\hline
\end{tabular}
\end{center}
\end{table}

}


\noindent
In Table
\ref{tbl:IPAD17},
the initial 10 discriminants with IPAD Abs = 17 
\(( [ 3, 3, 3 ], [ 3, 9 ], [ 3, 9 ], [ 9, 27 ] )\)
are classified
according to their transfer kernel type
TKT E.6 or E.14 or H.4, in the ground state GS, \(n = 0\).
The corresponding metabelianizations \(M\) are vertices on the Q-tree.

\renewcommand{\arraystretch}{1.1}

{\small

\begin{table}[ht]
\caption{List of discriminants associated with IPAD Abs = 17}
\label{tbl:IPAD17}
\begin{center}
\begin{tabular}{|r|r|l|l|c|}
\hline
 No &              Disc & Factors                                 & TKT  & Length \\
\hline
  1 &   \(\mathbf{1\,162\,949}\) & \(23\cdot 59\cdot 857\)        & H.4  & \textbf{2 or 3} \\
  2 &   \(\mathbf{2\,747\,001}\) & \(3\cdot 19\cdot 48193\)       & H.4  & \textbf{3} \\
  3 &   \(3\,122\,232\) & \(2^3\cdot 3\cdot 19\cdot 41\cdot 167\) & H.4  & 2 or 3 \\
  4 &   \(\mathbf{3\,918\,837}\) & \(3\cdot 13\cdot 100483\)      & E.14 & \textbf{2} \\
  5 &   \(4\,074\,493\) & \(19\cdot 131\cdot 1637\)               & H.4  & 2 or 3 \\
  6 &   \(\mathbf{5\,264\,069}\) & \(139\cdot 37871\)             & E.6  & \textbf{3} \\
  7 &   \(6\,946\,573\) & \(29\cdot 31\cdot 7727\)                & E.6  & 3 \\
  8 &   \(\mathbf{7\,153\,097}\) & \(7\cdot 613\cdot 1667\)       & E.6  & \textbf{2} \\
  9 &   \(8\,897\,192\) & \(2^3\cdot 163\cdot 6823\)              & E.14 & 2 \\
 10 &   \(\mathbf{9\,433\,849}\) & \(2549\cdot 3701\)             & E.14 & \textbf{3} \\
\hline
\end{tabular}
\end{center}
\end{table}

} 
 
\noindent
In Table
\ref{tbl:IPAD55},
the initial 18 discriminants with IPAD Abs = 55
\(( [ 3, 9 ], [ 3, 9 ], [ 3, 9 ], [ 27, 81 ] )\)
are classified
according to their transfer kernel type
TKT E.8 or E.9 or G.16, in the first excited state ES 1, \(n = 1\).
The corresponding metabelianizations \(M\) are vertices on the U-tree.


\renewcommand{\arraystretch}{1.1}

{\small

\begin{table}[ht]
\caption{List of discriminants associated with IPAD Abs = 55}
\label{tbl:IPAD55}
\begin{center}
\begin{tabular}{|r|r|l|l|c|}
\hline
 No &              Disc & Factors                             & TKT  & Length \\
\hline

  1 &  \(\mathbf{26\,889\,637}\) & prime                               & E.8  & \textbf{2} \\
  2 &  \(\mathbf{59\,479\,964}\) & \(2^2\cdot 347\cdot 42853\)         & G.16 & \textbf{2 or 3} \\
  3 &  \(\mathbf{79\,043\,324}\) & \(2^2\cdot 19760831\)               & E.9  & \textbf{2} \\
  4 &  \(98\,755\,469\) & \(29\cdot 139\cdot 24499\)                   & E.8  & 2 \\
  5 & \(111\,121\,161\) & \(3\cdot 1163\cdot 31849\)                   & E.9  & 2 \\
  6 & \(135\,445\,241\) & prime                                        & E.9  & 2 \\
  7 & \(147\,910\,989\) & \(3\cdot 79\cdot 624097\)                    & E.8  & 2 \\
  8 & \(155\,191\,657\) & \(17\cdot 83\cdot 109987\)                   & E.9  & 2 \\
  9 & \(157\,423\,029\) & \(3\cdot 83\cdot 632221\)                    & G.16 & 2 or 3 \\
 10 & \(\mathbf{178\,243\,036}\) & \(2^2\cdot 113\cdot 139\cdot 2837\) & E.9  & \textbf{3} \\
 11 & \(188\,823\,317\) & \(8293\cdot 22769\)                          & E.8  & 2 \\
 12 & \(\mathbf{209\,483\,033}\) & prime                               & G.16 & \(\mathbf{\ge 3}\) \\
 13 & \(227\,396\,348\) & \(2^2\cdot 56849087\)                        & G.16 & 2 or 3 \\
 14 & \(230\,668\,493\) & \(11\cdot19\cdot 619\cdot 1783\)             & E.9  & \\
 15 & \(248\,917\,036\) & \(2^2\cdot 887\cdot 70157\)                  & E.9  & \\
 16 & \(249\,304\,648\) & \(2^3\cdot 29\cdot 613\cdot 1753\)           & E.9  & \\
 17 & \(264\,062\,393\) & \(7\cdot 37723199\)                          & E.9  & \\
 18 & \(\mathbf{292\,399\,937}\) & \(37\cdot 7902701\)                 & E.8  & \textbf{3} \\
\hline
\end{tabular}
\end{center}
\end{table}

}


\noindent
In Table
\ref{tbl:IPAD59},
the initial 7 discriminants with IPAD Abs = 59 
\(( [ 3, 3, 3 ], [ 3, 9 ], [ 3, 9 ], [ 27, 81 ] )\)
are classified
according to their transfer kernel type
TKT E.6 or E.14 or H.4, in the first excited state ES 1, \(n = 1\).
The corresponding metabelianizations \(M\) are vertices on the Q-tree.

\renewcommand{\arraystretch}{1.1}

{\small

\begin{table}[ht]
\caption{List of discriminants associated with IPAD Abs = 59}
\label{tbl:IPAD59}
\begin{center}
\begin{tabular}{|r|r|l|l|c|}
\hline
 No &              Disc & Factors                             & TKT  & Length \\
\hline
  1 &  \(\mathbf{70\,539\,596}\) & \(2^2\cdot 17\cdot 1037347\)        & E.14 & \textbf{3} \\
  2 &  \(\mathbf{75\,393\,861}\) & \(3\cdot 17\cdot 941\cdot 1571\)    & E.6  & \textbf{3} \\
  3 & \(\mathbf{111\,046\,577}\) & \(181\cdot 199\cdot 3083\)          & E.6  & \textbf{2} \\
  4 & \(\mathbf{113\,284\,396}\) & \(2^2\cdot 17\cdot 347\cdot 4801\)  & E.14 & \textbf{2} \\
  5 & \(\mathbf{126\,691\,957}\) & \(7\cdot 103\cdot 199\cdot 883\)    & H.4  & \textbf{3} \\
  6 & \(136\,970\,636\) & \(2^2\cdot 11\cdot 103\cdot 30223\)          & E.14 & 2 \\
  7 & \(\mathbf{170\,356\,565}\) & \(5\cdot 19\cdot 1793227\)          & H.4  & \textbf{2 or 3} \\
\hline
\end{tabular}
\end{center}
\end{table}

}


\section{Real quadratic prototypes}
\label{s:ProtoTypes}

\noindent
For each assigned transfer kernel type 
\(\varkappa(k)\in\lbrace (1122), (2122), (3122), (1231), (2231), (4231)\rbrace\)
and each foregiven excited state of (logarithmic) abelian type invariants (ATI), or transfer target type (TTT),
\(\alpha(k)\in\lbrace \lbrack(n+3,n+2),111,21,21\rbrack; \lbrack(n+3,n+2),21,21,21\rbrace\)
with non-negative integers \(n\ge 0\),
the \textbf{prototype} is the minimal positive quadratic fundamental discriminant \(d\)
such that the real quadratic field \(k=\mathbb{Q}(\sqrt{d})\)
has TKT \(\varkappa(k)\) and TTT \(\alpha(k)\).

\bigskip
\noindent
The \textbf{Ground State} is characterized by
the \textit{polarization}
\((32)\) in the ATI of the first order
\begin{equation}
\label{eqn:Ground}
\alpha(k)=\lbrack 32,111,21,21\rbrack, \text{ respectively } \lbrack 32,21,21,21\rbrack,
\text{ see Table }\ref{tbl:Ground}\text{ under the GRH},
\end{equation}
whereas the \textit{stabilization}
\((111,21,21)\), respectively \((21,21,21)\), remains the same for all states.


\renewcommand{\arraystretch}{1.1}

\begin{table}[ht]
\caption{Prototypes of the \textbf{Ground State} computed using
\cite{Fi2001,MAGMA2026,Sh1964,Ma2015c}}

\label{tbl:Ground}
\begin{center}
\begin{tabular}{|rr|l|c|c|}
\hline
 No. &           \(d\) & TKT  & \(\ell_3(k)\) & Reference \\
\hline
\hline
   6 & \(5\,264\,069\) & E.6  &         \(3\) & Tbl. \ref{tbl:IPAD17} \\
   8 & \(7\,153\,097\) & E.6  &         \(2\) & Tbl. \ref{tbl:IPAD17} \\
\hline
   4 & \(3\,918\,837\) & E.14 &         \(2\) & Tbl. \ref{tbl:IPAD17} \\
  10 & \(9\,433\,849\) & E.14 &         \(3\) & Tbl. \ref{tbl:IPAD17} \\
\hline 
   1 & \(1\,162\,949\) & H.4  & \(2\) or \(3\)& Tbl. \ref{tbl:IPAD17} \\
   2 & \(2\,747\,001\) & H.4  &         \(3\) & Tbl. \ref{tbl:IPAD17} \\
  25 &\(23\,064\,965\) & H.4  &       \(\ge 3\)& Extension \\
  30 &\(30\,118\,269\) & H.4  & Schur \(\ge 3\)& Extension \\
\hline  
\hline 
   7 & \(6\,098\,360\) & E.8  &         \(3\) & Tbl. \ref{tbl:IPAD16} \\
  12 & \(8\,632\,716\) & E.8  &         \(2\) & Tbl. \ref{tbl:IPAD16} \\
\hline 
   1 &    \(342\,664\) & E.9  &         \(3\) & Tbl. \ref{tbl:IPAD16} \\
   4 & \(4\,760\,877\) & E.9  &         \(2\) & Tbl. \ref{tbl:IPAD16} \\
\hline 
  13 & \(8\,711\,453\) & G.16 &         \(3\) & Tbl. \ref{tbl:IPAD16} \\
  15 & \(9\,448\,265\) & G.16 & \(2\) or \(3\)& Tbl. \ref{tbl:IPAD16} \\
\hline
\end{tabular}
\end{center}
\end{table}


\bigskip
\noindent
The \textbf{First Excited State} is characterized by
the \textit{polarization}
\((43)\) in the ATI of the first order

\begin{equation}
\label{eqn:Excited}
\alpha(k)=\lbrack 43,111,21,21\rbrack \text{ respectively } \lbrack 43,21,21,21\rbrack,
\text{ see Table }\ref{tbl:Excited}\text{ under the GRH}.
\end{equation}


\renewcommand{\arraystretch}{1.1}

\begin{table}[ht]
\caption{Prototypes of the \textbf{First Excited State} computed by B. Allombert}
\label{tbl:Excited}
\begin{center}
\begin{tabular}{|rr|l|c|c|}
\hline
 No. &             \(d\) & TKT  & \(\ell_3(k)\) & Reference \\
\hline
\hline
   2 &  \(75\,393\,861\) & E.6  &         \(3\) & Tbl. \ref{tbl:IPAD59} \\
   3 & \(111\,046\,577\) & E.6  &         \(2\) & Tbl. \ref{tbl:IPAD59} \\
\hline
   1 &  \(70\,539\,596\) & E.14 &         \(3\) & Tbl. \ref{tbl:IPAD59} \\
   4 & \(113\,284\,396\) & E.14 &         \(2\) & Tbl. \ref{tbl:IPAD59} \\
\hline 
   5 & \(126\,691\,957\) & H.4  &         \(3\) & Tbl. \ref{tbl:IPAD59} \\
   7 & \(170\,356\,565\) & H.4  & \(2\) or \(3\)& Tbl. \ref{tbl:IPAD59} \\
  14 & \(216\,353\,320\) & H.4  &     \(\ge 3\) & Extension \\  
\hline 
\hline  
   1 &  \(26\,889\,637\) & E.8  &         \(2\) & Tbl. \ref{tbl:IPAD55} \\
  18 & \(292\,399\,937\) & E.8  &         \(3\) & Tbl. \ref{tbl:IPAD55} \\
\hline 
   3 &  \(79\,043\,324\) & E.9  &         \(2\) & Tbl. \ref{tbl:IPAD55} \\
  10 & \(178\,243\,036\) & E.9  &         \(3\) & Tbl. \ref{tbl:IPAD55} \\
\hline 
   2 &  \(59\,479\,964\) & G.16 & \(2\) or \(3\)& Tbl. \ref{tbl:IPAD55} \\
  12 & \(209\,483\,033\) & G.16 &      \(\ge 3\)& Tbl. \ref{tbl:IPAD55} \\
\hline
\end{tabular}
\end{center}
\end{table}


\noindent
The root region up to the logarithmic order \(\mathrm{lo}=20\)
of two infinite descendant trees
\cite{Nm1975,Nm1989,Ob1990}
is drawn in Figure
\ref{fig:TreeStrucSimple}
for the pruned tree \({}^\ast\mathcal{T}(R)\),
which is restricted to the \textit{skeleton} type and \textit{simple} types,
and in Figure
\ref{fig:TreeStrucComplex}
for the pruned tree \({}_\ast\mathcal{T}(R)\),
which is restricted to the \textit{skeleton} type and the \textit{complex} type.
For the types E.6, E.14 and H.4, the root is \(R=\langle 243,6\rangle\) with type c.18,
and for the types E.8, E.9 and G.16, the root is \(R=\langle 243,8\rangle\) with type c.21.


\section{Conclusion}
\label{s:Conclusion}

\noindent
The investigation of non-metabelian \(3\)-class field towers with length \(\ell_3(k)=3\)
started in 2012 in cooperation with M. R. Bush
\cite[Thm. 4.1 and Cor. 4.1.1, pp. 774--775]{BuMa2015},
where the erroneous claim \(\ell_3(k)=2\)
for the \textbf{imaginary} quadratic field \(k=\mathbb{Q}(\sqrt{-9748})\)
with TKT E.9 in the GS \(n=0\) in
\cite[p. 41]{SoTa1934}
was disproved by the rigorous verification that
either \(S=S_0^3\) or \(S=S_0^4\), rather than \(S=M\),
presented at the West Coast Number Theory Conference (WCNT) in Asilomar, December 2013.

In 2015, 
\cite[pp. 184--193]{Ma2015a}
we extended the proof for \(\ell_3(k)=3\) of 2012
to all four simple TKTs, E.6, E.8, E.9, E.14, of imaginary quadratic fields,
up to ES \(n\le 7\), i.e. \(\mathrm{cl}\le 19\), \(\mathrm{cc}\le 10\), \(\mathrm{lo}\le 29\),
introducing the cover \(\mathrm{cov}(M)\),
presented at the 29th Journ\'ees Arithm\'etiques (JA) in Debrecen, July 2015.
We also drew a more detailed version of the present Figure
\ref{fig:TreeStrucSimple}
(bounded by \(\mathrm{lo}=20\))
in the Figures 8--9 on pp. 188--189
(bounded by \(\mathrm{lo}=14\)),
and we studied the normal lattice of the Schur \(\sigma\)-groups \(S_0^i\) and \(S_1^i\)
in the Figures 10--11 on pp. 191--192.

Arithmetical prototypes up to ES \(n\le 4\)
for all imaginary quadratic fields \(k=\mathbb{Q}(\sqrt{d})\), \(-10^8<d<0\),
were computed for the Figures 3--4 on pp. 754--755 in
\cite{Ma2015b},
presented at the 1st International Conference on Goups and Algebras (ICGA) in Shanghai, July 2015.
See also the Figures 1--2 on pp. 24--25 in
\cite{Ma2015c}.

It should be pointed out that the present Theorem
\ref{thm:SimpleStageCriterion}
on the length \(\ell_3(k)\) of \textbf{real} quadratic fields with \textbf{simple} TKT
was proved on 5 Oct 2025 for all excited states (ES \(n\)) with \(n\ge 1\),
whereas it was known for the ground state (GS) with \(n=0\) already in 
\cite[Thm. 6.3, pp. 298--299]{Ma2015d}
for the \(U\)-tree alone, in
\cite[Thm. 7.5--7.12, pp. 159--166]{Ma2017}
for both, the \(Q\)-tree and the \(U\)-tree,
and the trees with minimal positive discriminants \(d>0\) of prototypes were drawn in
\cite[Fig. 3--4, pp. 151--152]{Ma2017}.

Due to the highly confusing branches of \textbf{complex} TKTs, Theorem
\ref{thm:ComplexStageCriterion}
was proved on 15 Oct 2025 for bounded state parameter \(0\le n\le 4\) only.
With second order ATIs, only implications can be proven rather than equivalences.
\textbf{TODO:} For an extension to \(n\ge 5\),
modified infinite limit groups would be required,
maybe \(3\)-adic Lie groups like \(\mathrm{SL}_2(\mathbb{Z}_3)\).

It was possible to prove necessary and sufficient criteria
for all excited states (ES) parametrized with state parameter \(n\ge 0\).
Figure
\ref{fig:TreeStrucSimple}
ostensively reveals that,
in the case of non-coincidence, \(S\ne M\),
the connecting path between \(M\) and \(S\)
consists of an ascending part with exclusive step size \(s=1\)
from the metabelianization \(M=S/S^{\prime\prime}\) to the \textbf{fork} \(F=R(-\#1;1)\),
and a descending part with alternating step sizes \(s=2\) and \(s=1\),
the so-called \textbf{structure elements} (SE),
from the fork \(F\) to the Schur \(\sigma\)-group \(S\).
The ground state (GS) gives rise to logarithmic order
\(\mathrm{lo}(S)=8\) without SE, and
the \(n\)-th excited state (ES \(n\))
requires exactly \(n\) SE
and leads to logarithmic order \(\mathrm{lo}(S)=8+3n\).

Examples for the distinction between the two possible tower lengths 
\(\ell_3(k)\in\lbrace 2,3\rbrace\)
for real quadratic base fields \(k\)
are given for the ground state (GS) in
\cite[Thm. 7.8, pp. 162--163, and Thm. 7.12, pp. 165--166]{Ma2017}.
For the first excited state (ES \(1\)), we have for instance
three stages for \(d=70\,539\,596\) of type E.14,
and for \(d=75\,393\,861\) of type E.6,
but only two stages for \(d=26\,889\,637\) of type E.8.
For the second excited state (ES \(2\)), we have
only two stages for \(d=336\,698\,284\) of type E.14
(ES 2 is not contained in our tables).

The tree structure in Figure
\ref{fig:TreeStrucComplex}
is more complicated, due to \textit{infinite branches}.
In contrast to the \textit{simple types} in section E of Scholz and Taussky,
where a real quadratic field \(k=\mathbb{Q}(\sqrt{d})\) with
tower length \(\ell_3(k)=3\) has a Schur or Schur\(+1\) \(\sigma\)-group
\(S=\mathrm{Gal}(\mathrm{F}_3^\infty(k)/k)\)
as the Galois group of its maximal unramified pro-\(3\)-extension
and the path between \(S\) and \(M=S/S^{\prime\prime}\) is a \textit{fork topology}
as described above, 
the \textit{complex types} in the sections G and H of Scholz and Taussky
admit a variety of different scenarios,
most frequently a \textit{child topology} with an immediate (Schur+1)-descendant \(S=M-\#1\) of \(M\)
and thus necessarily precise length \(\ell_3(k)=3\),
for instance 
\(d=2\,747\,001\) and \(d=126\,691\,957\) of type H.4, and
\(d=8\,711\,453\) of type G.16,
or less frequently a \textit{fork topology} with either a (Schur+1) \(\sigma\)-group \(S\),
such as \(d=23\,064\,965\) and \(d=216\,353\,320\) of type H.4, and
\(d=209\,483\,033\) of type G.16,
or quite sparsely with a Schur \(\sigma\)-group \(S\),
e.g. \(d=30\,118\,269\) of type H.4.
All the latter are located on infinite branches with presumably unbounded \(\ell_3(k)\ge 3\).


\renewcommand{\arraystretch}{1.1}

\begin{table}[ht]
\caption{Soluble length for \textbf{sporadic} and \textbf{periodic} type H.4}
\label{tbl:SolubleLengthH4}
\begin{center}
\begin{tabular}{|r|r|r||c|r||c|r|c|r|c|r|c|r|}
\hline
    &    &    & \multicolumn{2}{|c||}{}         & \multicolumn{8}{|c|}{Periodic}                                                                               \\
    &    &    & \multicolumn{2}{|c||}{Sporadic} & \multicolumn{2}{|c|}{GS} & \multicolumn{2}{|c|}{ES1} & \multicolumn{2}{|c|}{ES2} & \multicolumn{2}{|c|}{ES3} \\
 lo & cl & cc & sl &                      \(a\) & sl &               \(a\) & sl &                \(a\) & sl &                \(a\) & sl &                \(a\) \\
\hline
  8 &  5 &  3 & 3 &                          9 &   &                      &   &                       &   &                       &   &                       \\
 11 &  7 &  4 & 3 &                         12 & 3 &                   13 &   &                       &   &                       &   &                       \\
 14 &  9 &  5 & 3 &                         15 & 3 &                   16 & 3 &                    17 &   &                       &   &                       \\
\hline
 17 & 11 &  6 & 4 &                         18 & 3 &                   19 & 3 &                    20 & 3 &                    21 &   &                       \\
 20 & 13 &  7 & 4 &                         21 & 3 &                   22 & 3 &                    23 & 3 &                    24 & 3 &                    25 \\
 23 & 15 &  8 & 4 &                         24 & 4 &                   25 & 3 &                    26 & 3 &                    27 & 3 &                    28 \\
 26 & 17 &  9 & 4 &                         27 & 4 &                   28 & 3 &                    29 & 3 &                    30 & 3 &                    31 \\
 29 & 19 & 10 & 4 &                         30 & 4 &                   31 & 4 &                    32 & 3 &                    33 & 3 &                    34 \\
\hline
 32 & 21 & 11 & 5 &                         33 & 4 &                   34 & 4 &                    35 & 3 &                    36 & 3 &                    37 \\
 35 & 23 & 12 & 5 &                         36 & 4 &                   37 & 4 &                    38 & 4 &                    39 & 3 &                    40 \\
 38 & 25 & 13 & 5 &                         39 & 4 &                   40 & 4 &                    41 & 4 &                    42 & 3 &                    43 \\
 41 & 27 & 14 & 5 &                         42 & 4 &                   43 & 4 &                    44 & 4 &                    45 & 4 &                    46 \\
 44 & 29 & 15 & 5 &                         45 & 5 &                   46 & 4 &                    47 & 4 &                    48 & 4 &                    49 \\
 47 & 31 & 16 & 5 &                         48 & 5 &                   49 & 4 &                    50 & 4 &                    51 & 4 &                    52 \\
 50 & 33 & 17 & 5 &                         51 & 5 &                   52 & 4 &                    53 & 4 &                    54 & 4 &                    55 \\
 53 & 35 & 18 & 5 &                         54 & 5 &                   55 & 4 &                    56 & 4 &                    57 & 4 &                    58 \\
 56 & 37 & 19 & 5 &                         57 & 5 &                   58 & 5 &                    59 & 4 &                    60 & 4 &                    61 \\
 59 & 39 & 20 & 5 &                         60 & 5 &                   61 & 5 &                    62 & 4 &                    63 & 4 &                    64 \\
 62 & 41 & 21 & 5 &                         63 & 5 &                   64 & 5 &                    65 & 4 &                    66 & 4 &                    67 \\
\hline
 65 & 43 & 22 & 6 &                         66 & 5 &                   67 & 5 &                    68 & 4 &                    69 & 4 &                    70 \\
\hline
\end{tabular}
\end{center}
\end{table}

\noindent
In
\cite{BaBu2007},
a \textit{sporadic variant} of the TKT H.4, \(\varkappa=(4111)\sim (2122)\),
with \textit{stable} ATI \(\alpha=\lbrack (1^3)^3,21\rbrack\)
is analyzed with the aid of the \textit{special linear group}
of dimension \(2\) over the \(3\)-adic integers, \(\mathrm{SL}_2(\mathbb{Z}_3)\).
It is the \(3\)-principalization type of all members of the
infinite descendant tree of the root \(\langle 243,4\rangle\),
whose unique immediate \(\sigma\)-descendant is Ascione's group \(N=\langle 729,45\rangle\)
\cite{AHL1977}.
The pruned subtree \(\mathcal{T}_\sigma(N)\) of all \(\sigma\)-descendants of \(N\)
contains an infinite sequence \((G_m)_{m\ge 2}\) of non-metabelian Schur \(\sigma\)-groups
possessing an unbounded soluble length \(\mathrm{sl}(G_m)=\lfloor\log_2(3m+3)\rfloor\)
\cite[Thm. 2.1, p. 160]{BaBu2007}.

The location of these groups in the tree is given by the parametrized relative identifier
\begin{equation}
\label{eqn:BaBu}
N(-\#2;1-\#1;1)^{m-2}-\#2;2, \qquad m\ge 2.
\end{equation}

\noindent
\textbf{TODO:} It seems plausible that other \(3\)-adic Lie groups can be used to investigate
the Schur \(\sigma\)-groups \(S_{n,t}^i\) with the
\textit{periodic variant} of the TKT H.4, \(\varkappa=(2122)\),
and with \textit{variable} ATI
depending on the order \(n\ge 0\) of the excited state (ES \(n\)):
\(\alpha=\lbrack (n+3,n+2),1^3,(21)^2\rbrack\)
with \textbf{hetero}cyclic polarization \((n+3,n+2)\).

In Table
\ref{tbl:SolubleLengthH4},
we compare the logarithmic order lo, nilpotency class cl, coclass cc, soluble length sl,
and the \(3\)-valuation \(a:=v_3(A_m)\) of the order of the automorphism group \(A_m:=\mathrm{ord}(G_m)\)
with the corresponding values of the ground state (GS) and the first three excited states (ES)
of the Schur \(\sigma\)-groups \(S_{n,t}^i\).
\textbf{TODO:} Deterministic laws for their soluble length seem plausible.


\section{Acknowledgements}
\label{s:Thanks}

\noindent
We are indebted to Michael Raymond Bush at the
Washington and Lee University (WLU) in Lexington, Virginia, USA,
for making available an extensive database of quadratic fundamental discriminants
with assigned abelian type invariants of the first order.
Sincere thanks are given to Mike F. Newman  at the
Australian National University (ANU) in Canberra, Capital Territory, Australia, for the infinite limit groups
which are crucial in the proofs of the main theorems.
We thank Bill Allombert at the University of Bordeaux, Aquitaine, France,
for computational aid in determining decisive abelian type invariants of the second order
with PARI/GP for Table
\ref{tbl:Excited}.

The authors are deeply indebted to the anonymous referee
for his expertise and helpful suggestions concerning the entire exposition.


\section{Data availability}
\label{s:Data}

\noindent
Magma program scripts and output
may be requested from the second author via email.


\section{Conflict of interests}
\label{s:Conflict}

\noindent
The authors declare that no conflict of interests arises from this article.


\section{Funding Declaration}
\label{s:Support}

\noindent
The second author gratefully acknowledges financial support by
the Austrian Science Fund (FWF): projects J0497-PHY, P26008-N25,
and by the Research Executive Agency of the European Union (EUREA):
project Horizon Europe 2021--2027.



\end{document}